\newtheorem{theorem}{Theorem}[section]
\newtheorem{proposition}[theorem]{Proposition}
\newtheorem{corollary}[theorem]{Corollary}
\theoremstyle{definition}
\newtheorem*{remark}{Remark}
\numberwithin{equation}{section}
\DeclareMathOperator{\sgn}{sgn}					
\DeclareMathOperator{\tr}{tr}					
\newcommand{\concat}{\oplus}					
\newcommand{\dir}[1]{\overrightarrow{#1}}		
\newcommand{\card}[1]{\lvert{#1}\rvert}			
\newcommand{\ave}[1]{\langle{#1}\rangle}		
\newcommand{\abs}[2][]{\mathopen{#1\lvert}%
 {#2}\mathclose{#1\rvert}}						
\newcommand{\norm}[2][]{\mathopen{#1\lVert}%
 {#2}\mathclose{#1\rVert}}						
\newcommand{\supnorm}[2][]{\mathopen{#1\lVert}%
 {#2}\mathclose{#1\rVert}_\infty}				
\newcommand{\boundary}[1]{{\partial#1}}			
\newcommand{\oddset}[1]{{\delta#1}}				
\newcommand{\bc}{\Box}							
\newcommand{\free}{{\text{\rm free}}}			
\newcommand{\labelmark}{\diamondsuit}			
\newcommand{\Loop}{\ell}						
\newcommand{\Looprep}{{\hat\Loop}}				
\newcommand{\Path}{\wp}							
\newcommand{\lLoop}{\Loop^\labelmark}			
\newcommand{\lLooprep}{{\hat\Loop}^\labelmark}	
\newcommand{\lPath}{\Path^\labelmark}			
\newcommand{\Loops}{\mathcal{L}}				
\newcommand{\Configs}{\mathcal{C}}				
\newcommand{\lConfigs}{\Configs^\labelmark}		
\newcommand{\Decomps}{\mathcal{D}}				
\newcommand{\Pairings}{\mathcal{P}}				
\newcommand{\Id}{\mathrm{I}}					
\newcommand{\eps}{\varepsilon}					
\newcommand{\north}{\mathord\shortuparrow}		
\newcommand{\south}{\mathord\shortdownarrow}	
\newcommand{\east}{\mathord\shortrightarrow}	
\newcommand{\west}{\mathord\shortleftarrow}		
\newcommand{\torus}{\circledcirc}				
\newcommand{\C}{\mathds{C}}						
\newcommand{\R}{\mathds{R}}						
\newcommand{\Z}{\mathds{Z}}						
\begin{document}

\date{July 19, 2013}
\title[Signed loop approach to the Ising model]
	{The signed loop approach to the Ising model:
	 foundations and critical point}
\author{Wouter Kager}
\author{Marcin Lis}
\author{Ronald Meester}
\address{VU University\\
Department of Mathematics\\
De Boelelaan 1081a\\
1081\,HV Amsterdam\\
The Netherlands}
\thanks{The second author (ML) was financially supported by Vidi grant 
639.032.916 of the Netherlands Organisation for Scientific Research (NWO)}
\email{\char`{w.kager,m.lis,r.w.j.meester\char`}\,@\,vu.nl}

\begin{abstract}
	The signed loop approach is a beautiful way to rigorously study the 
	two-dimensional Ising model with no external field. In this paper, we 
	explore the foundations of the method, including details that have so far 
	been neglected or overlooked in the literature. We demonstrate how the 
	method can be applied to the Ising model on the square lattice to derive 
	explicit formal expressions for the free energy density and two-point 
	functions in terms of sums over loops, valid all the way up to the 
	self-dual point. As a corollary, it follows that the self-dual point is 
	critical both for the behaviour of the free energy density, and for the 
	decay of the two-point functions.
\end{abstract}

\keywords{Ising model, signed loops, critical point, free energy density, 
two-point functions}
\subjclass[2010]{82B20 (Primary) 60C05 (Secondary)}

\maketitle

\section{Introduction and main results}
\label{sec:introduction}

The Ising model~\cite{Ising} was introduced to explain certain properties of 
ferromagnets, in particular, the phenomenon of spontaneous magnetization. In 
1952, Kac and Ward~\cite{KacWard} proposed a method for approaching the Ising 
model on~$\Z^2$, based on studying configurations of signed loops. In this 
self-contained paper, we explore this method in detail and with mathematical 
rigour. This leads to new explicit formal expressions for the free energy 
density and two-point functions in terms of sums over signed loops, which in 
turn allow us to rederive several classical results about the Ising model.

We consider the Ising model on finite rectangles in~$\Z^2$, by which we mean 
graphs $G = (V,E)$ whose vertex sets~$V$ consist of all points of~$\Z^2$ 
contained in a rectangle $[a,b] \times [c,d]$ in~$\R^2$, and whose edge 
sets~$E$ consist of all unordered pairs $\{u,v\}$ with $u,v\in V$ such that 
their $L_1$~distance $\norm{u-v}$ is~1. For brevity, we will also write $uv$ 
instead of $\{u,v\}$ for the edge between $u$ and~$v$ (note that $uv=vu$). The 
\emph{boundary}~$\boundary{G}$ of~$G$ is the set of those vertices~$u$ in~$V$ 
for which there is a $v\in \Z^2\setminus V$ with $\norm{u-v} = 1$.

We associate to~$G$ a space of \emph{spin configurations} $\Omega_G = 
\{-1,+1\}^V$. For $\sigma\in \Omega_G$, $\sigma_v$ denotes the spin at~$v$. 
Sometimes we impose \emph{positive boundary conditions}, meaning that we 
restrict ourselves to the set of configurations
\[
	\Omega_G^+
	= \{\sigma\in\Omega_G \colon \sigma_u = +1 \text{ if $u\in 
	\boundary{G}$}\}.
\]
In contrast, when we speak of \emph{free boundary conditions}, we work with 
the unrestricted set of spin configurations $\Omega_G^\free = \Omega_G$.

The Ising model defines a (Gibbs--Boltzmann) probability distribution on the 
set of spin configurations. At inverse temperature~$\beta$, it is given by
\begin{equation}
	\label{eqn:PIsing}
	P^\bc_{G,\beta}(\sigma)
	= \frac1{Z^\bc_{G,\beta}} \prod_{uv\in E} e^{\beta \sigma_u\sigma_v},
	\qquad \sigma\in \Omega_G^\bc,
\end{equation}
where $\bc\in \{\free,+\}$ stands for the imposed boundary condition, and
$Z^\bc_{G,\beta}$ is the \emph{partition function} of the model, defined as
\begin{equation}
	\label{eqn:ZIsing}
	Z^\bc_{G,\beta}
	= \sum_{\sigma\in\Omega^\bc} \prod_{uv\in E} e^{\beta \sigma_u\sigma_v}.
\end{equation}
To simplify the notation, and following the physics literature, we will 
write
\begin{equation}
	\label{eqn:aveIsing}
	\ave{\,\cdot\,}^\bc_{G,\beta} = E^\bc_{G,\beta}(\,\cdot\,)
\end{equation}
for expectations with respect to~$P^\bc_{G,\beta}$. Important functions are 
the \emph{Helmholtz free energy}
\[
	F^\bc_{G,\beta}	= -\beta^{-1} \ln Z^\bc_{G,\beta}
\]
and the \emph{free energy density}~$f(\beta)$, obtained as the infinite-volume 
limit
\[
	\lim_{G\to\Z^2} \frac1{\card{V}} F^\bc_{G,\beta}
	= \lim_{G\to\Z^2} -\frac1{\beta\card{V}} \ln Z^\bc_{G,\beta}
	=: f(\beta).
\]
It is well known that this limit exists, and also not difficult to show that 
the limit is the same for all boundary conditions, see for 
example~\cite{Simon}*{Section~II.3} (as we shall see, existence of the limit 
actually also follows from the signed loop approach for non-critical~$\beta$). 
The formalism of statistical mechanics predicts that phase transitions 
coincide with discontinuities or other singularities in derivatives of the 
free energy density.

In the case of the Ising model on~$\Z^2$, the classical arguments of 
Peierls~\cite{Peierls} and Fisher~\cite{Fisher} (see also~\cites{Dobrushin, 
Griffiths, GriffithsIII}) established that it does undergo a phase transition, 
which can be characterized in terms of a change in behaviour of the 
infinite-volume limits $\smash{ \ave{ \sigma_u \sigma_v }^\bc_{\Z^2, \beta} }$ 
of the \emph{two-point functions} $\smash{ \ave{ \sigma_u\sigma_v }^\bc_{G, 
\beta} }$. Peierls' argument implies that as $\norm{u-v}\to \infty$, these 
infinite-volume two-point functions stay bounded away from~0 for large 
enough~$\beta$. In contrast, Fisher's argument yields exponential decay to~0 
of these two-point functions as $\norm{u-v}\to\infty$, for sufficiently 
small~$\beta$. However, there is a gap between the two ranges of~$\beta$ for 
which these arguments work, so they are not strong enough to conclude that the 
phase transition is sharp. As we shall see, the signed loop method studied 
here does lead to a proof of this fact.

To be more specific, a signed loop is essentially a closed, non-backtracking 
walk, with a positive or negative weight assigned to it; see 
Section~\ref{ssec:signedloops} for a precise definition. In this paper, we 
demonstrate how the free energy and two-point functions can be expressed as 
(infinite) sums over signed loops in~$\Z^2$, and moreover, how the rate of 
convergence of these sums can be controlled. For instance, 
Theorem~\ref{thm:analyticity} below expresses the free energy 
density~$f(\beta)$ as an explicit formal sum of the weights of all loops for 
which the origin is the ``smallest'' vertex visited (in lexicographic order). 
Likewise, Theorems \ref{thm:lowTcorr} and~\ref{thm:highTcorr} express the 
two-point functions $\smash{ \ave{ \sigma_u \sigma_v }^\bc_{\Z^2,\beta} }$ as 
infinite sums over explicitly defined classes of signed loops.

In all cases, we first derive the corresponding loop expressions for finite 
rectangles~$G$. These were also obtained simultaneously and independently by 
Helmuth~\cite{Helmuth} via the theory of heaps of pieces, but we in addition 
give explicit bounds on the rates of convergence to take the infinite-volume 
limit. As corollaries, without requiring any external results, we rederive 
several classical results about the Ising model, which can be summarized as 
follows:

\begin{corollary}[Sharpness of phase transition]
	\label{cor:sharpness}
	The free energy density~$f(\beta)$ is analytic for all $\beta>0$ except at 
	the self-dual point $\beta = \beta_c$ given by
	\begin{equation}
		\label{eqn:beta_c}
		\exp(-2\beta_c) = \tanh\beta_c = \sqrt2-1.
	\end{equation}
	Moreover, as $\norm{u-v}\to\infty$, $\smash{ \ave{ \sigma_u \sigma_v 
	}^\free_{\Z^2, \beta} }$ decays to~$0$ exponentially fast when $\beta\in 
	(0,\beta_c)$, while $\ave{ \sigma_u\sigma_v }^+_{\Z^2, \beta}$ stays 
	bounded away from~$0$ when $\beta\in (\beta_c,\infty)$.
\end{corollary}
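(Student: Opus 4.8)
The plan is to derive the corollary purely from the loop expressions announced in Theorems~\ref{thm:analyticity}, \ref{thm:lowTcorr}, and~\ref{thm:highTcorr}, using the quantitative convergence bounds that accompany them. The whole point of the signed loop approach is that each quantity of interest is written as an explicit sum of loop weights, and a loop of combinatorial length $2n$ carries a weight proportional to $(\tanh\beta)^{2n}$ (or some comparable power coming from the edge weights). The convergence of the loop sums is therefore governed by a competition between this exponential decay in the length and the exponential growth of the \emph{number} of loops of a given length. I expect the critical value $\beta_c$ to emerge precisely as the point where these two exponential rates balance, which is why it shows up as the self-dual solution of $\tanh\beta_c=\sqrt2-1$.

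Let me think about the analyticity half. For the free energy density I would start from the loop expression in Theorem~\ref{thm:analyticity}, which writes $f(\beta)$ as a sum over all loops whose lexicographically smallest vertex is the origin. Each loop contributes an analytic (indeed polynomial) function of the edge weight $z=\tanh\beta$, so analyticity of $f$ will follow from a Weierstrass-type argument: if I can bound the total weight of all loops of length $2n$ through the origin by $C\,\mu^{2n} z^{2n}$ for some connective constant~$\mu$, then the series converges uniformly on compact subsets of $\{z:\mu z<1\}$ and defines an analytic function there. The condition $\mu z<1$ should translate exactly into $\beta<\beta_c$, giving analyticity on $(0,\beta_c)$; a symmetric argument at low temperature (using the self-duality of the model to map $\beta>\beta_c$ to a high-temperature regime) should then give analyticity on $(\beta_c,\infty)$. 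The self-dual point itself is left out because the radius of convergence is reached there.

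For the decay of the two-point functions I would argue similarly but keep track of the spatial separation. Theorems~\ref{thm:lowTcorr} and~\ref{thm:highTcorr} express $\ave{\sigma_u\sigma_v}^\bc_{\Z^2,\beta}$ as sums over classes of loops (or paths) that must connect, or wind around, $u$ and~$v$. For $\beta\in(0,\beta_c)$ any such object has length at least of order $\norm{u-v}$, so its weight is at most of order $z^{\norm{u-v}}$; combining this with the summable bound on the number of loops (valid because $\mu z<1$ in this range) yields a geometric bound of the form $\ave{\sigma_u\sigma_v}^\free_{\Z^2,\beta}\le C\,(\mu z)^{c\norm{u-v}}$, which is the claimed exponential decay. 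For $\beta\in(\beta_c,\infty)$ I would pass to the dual model: self-duality exchanges high and low temperature and turns exponential decay of the dual two-point function into a strictly positive lower bound for $\ave{\sigma_u\sigma_v}^+_{\Z^2,\beta}$, using the positivity of the $+$ boundary condition to control the limit from below.

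\textbf{The main obstacle} will be establishing the quantitative combinatorial bound on the total signed weight of loops of a given length through a fixed vertex, \emph{with the sharp constant} that makes the threshold land exactly at the self-dual point rather than at some suboptimal value. Naive counting of non-backtracking walks gives a connective constant that is too large, so crude bounds would only prove analyticity and decay on a strictly smaller interval $(0,\beta')$ with $\beta'<\beta_c$, reproducing the Peierls--Fisher gap rather than closing it. The delicate point is that the \emph{signs} in the signed loop weights produce cancellations, and it is exactly these cancellations that improve the effective growth rate to the critical value. I would therefore expect the heart of the argument to be a careful estimate—likely via the Kac--Ward matrix whose determinant encodes the signed sum, so that the radius of convergence is read off from the spectral radius of an explicit transfer-type operator—showing that the relevant series has radius of convergence exactly $z_c=\sqrt2-1$. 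Once that sharp radius is in hand, both the analyticity statement and the two decay/non-decay statements follow by the uniform-convergence and self-duality arguments sketched above.
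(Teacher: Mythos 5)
Your identification of the key mechanism is accurate: the sharp growth rate $\sqrt2+1$ for signed loop sums, obtained from the spectral/operator norm of the Kac--Ward transition matrix (Theorem~\ref{thm:keybound}), is exactly what closes the Peierls--Fisher gap, and your treatment of the high-temperature decay (loops connecting $u$ to~$v$ have length at least $\norm{u-v}$, so their total weight is geometrically small) is essentially the paper's proof of Corollary~\ref{cor:highTcorr}. However, there are two genuine gaps. First, the analyticity claim in the corollary is a \emph{sharpness} claim: it includes the assertion that $f(\beta)$ fails to be analytic \emph{at}~$\beta_c$. Your justification --- ``the self-dual point is left out because the radius of convergence is reached there'' --- only shows that your method does not prove analyticity at $\beta_c$, not that analyticity fails. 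A power series may extend analytically through points on its circle of convergence, and here $f$ is represented by two different series (in $\tanh\beta$ and in $e^{-2\beta}$) on the two sides of~$\beta_c$, so nothing in your argument rules out that they glue together to a function analytic at~$\beta_c$. The paper closes this by deriving Onsager's closed-form double integral for $f(\beta)$ (Corollary~\ref{cor:Onsager}) from Theorem~\ref{thm:analyticity}, and then invoking the divergence of the specific heat at~$\beta_c$; some explicit computation of this kind is indispensable.

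Second, the low-temperature statement cannot be obtained by the duality shortcut you propose. Kramers--Wannier duality does \emph{not} exchange the two spin--spin correlations: the dual object of $\ave{\sigma_u\sigma_v}^+_{\Z^2,\beta}$ at low temperature is a \emph{disorder} correlation at high temperature --- in the paper's notation the ratio $Z_{G^*}(x')/Z_{G^*}(x)$ of generating functions with sign-flipped weights along~$\gamma$ --- not the free-boundary two-point function whose exponential decay you established. Indeed, Theorem~\ref{thm:highTcorr} writes $\ave{\sigma_u\sigma_v}^\free_{\Z^2,\beta}$ as a product of an exponentially small loop sum with the dual correlation $\ave{\sigma_{u^*}\sigma_{v^*}}^+_{\Z^{2*},\beta^*}$, so inverting it expresses the quantity you want as a ratio of two exponentially decaying quantities; bounding that ratio away from zero is precisely the problem, so the reduction is circular. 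Nor does the pointwise positivity in Theorem~\ref{thm:lowTcorr} suffice, since you need a lower bound uniform in $\norm{u-v}$. The paper's actual argument (Corollary~\ref{cor:lowTcorr}) works directly with the representation $\ave{\sigma_u\sigma_v}^+_{\Z^2,\beta} = \exp\bigl(-2\sum_r f^{uv}_r(x)\bigr)$: once $\norm{u-v}$ is large, every $uv$-odd loop of length $r < \norm{u-v}/2$ is confined to a box around $u$ or around~$v$ (hence is $u$-odd or $v$-odd), while the total weight of longer loops vanishes by the bound from Theorem~\ref{thm:keybound}; this factorizes the limit into $\bigl[\ave{\sigma_o}^+_{\Z^2,\beta}\bigr]^2 > 0$. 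Some argument of this type (or an appeal to external correlation inequalities plus positivity of the magnetization, which the paper deliberately avoids) is needed to complete this half of the corollary.
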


Of course, various other approaches to the Ising model have been developed and 
explored in the past. Most famous are the original algebraic methods of 
Onsager and Kaufman \cites{Kaufman, KauOns, Onsager}, used by them to compute 
the free energy and study correlations (see also~\cite{Palmer}). We also 
mention the approach of Aizenman, Barsky and Fern\'andez~\cite{AizBarFer}, who 
prove sharpness of the phase transition using differential inequalities (this 
approach actually applies to a large class of ferromagnetic spin models in any 
dimension). More recently, in~\cite{BefDum2}, the fermionic observables, 
originally introduced by Smirnov~\cite{Smirnov2} to study the Ising model at 
criticality, have been used in an interesting way for yet another derivation 
of the value of the critical point.

The method considered here (based on the proposition of Kac and Ward) is 
combinatorial in nature, and as such often referred to as the 
\emph{combinatorial method}, but there are other approaches which include 
combinatorial aspects, such as the dimer approach exposed in~\cite{McCoyWu}. 
We therefore prefer to refer to the method considered here as the \emph{signed 
loop approach}.

Over the years, a number of articles developing the signed loop approach have 
appeared in the physics literature, of which the most relevant are 
\cites{Burgoyne, CosMac, Glasser, Sherman1, Sherman3, Vdovichenko1}. However, 
from a mathematical point of view, these papers leave a lot to be desired in 
terms of rigour and technical details. Moreover, doubts have been cast on the 
very validity of the whole method to begin with, not only in the years 
following the Kac--Ward paper, but still recently by Dolbilin et 
al.~\cite{DolShtSht}, who rightly pointed out an error in Vdovichenko's 
article~\cite{Vdovichenko1} (reproduced in~\cite{LanLif}*{\S151}) on the 
signed loop method.

With the present paper, we aim to remove these doubts and deficiencies once 
and for all. To this end, we provide complete, rigorous and detailed proofs of 
the combinatorial identities central to the signed loop method, all in a 
geometric manner. Our proofs of these identities essentially follow the same 
steps as Vdovichenko's paper. A key feature of this particular approach is 
that each configuration of $s$~loops is assigned a signed weight which is 
simply the product of the signed weights of the individual loops, divided 
by~$s!$. This factorization of weights is a crucial aspect of the method, 
which we believe could well be the key to further results beyond this paper. 
We show here that the desired factorization can be made to work if one defines 
the weight of a loop in the right way. Specifically, the error of Vdovichenko 
was that she did not include a loop's \emph{multiplicity} into its weight, as 
we do in equation~\eqref{eqn:loopweight} below. In addition to clarifying the 
signed loop approach and correcting Vdovichenko's error in this way, we also 
apply the results to the Ising model on~$\Z^2$ in ways not considered before 
to derive both new and classical results about the Ising model, as was already 
mentioned above.

The paper is organized as follows. A precise definition of signed loops and 
the formulation of our main results follows in Sections 
\ref{ssec:signedloops}--\ref{ssec:identities}, with our results for the Ising 
model in Section~\ref{ssec:isingmodel}, and our key combinatorial identities 
in Section~\ref{ssec:identities}. A brief discussion of the history and status 
of the combinatorial identities is included at the end of 
Section~\ref{ssec:identities}. The proofs of these identities are given in 
Section~\ref{sec:combinatorial}, and the proofs of our results about the Ising 
model are in Section~\ref{sec:isingresults}.

\subsection{Signed loops}
\label{ssec:signedloops}

Although our applications are in~$\Z^2$, it will be both necessary for this 
paper and of interest for future applications to study signed loops on a more 
general class of graphs. Our starting point is a (finite or infinite) graph $G 
= (V,E)$ embedded in the plane, with vertex set~$V$ and edge set~$E$. We 
identify $G$ with its embedding. We assume $G$ does not have multiple edges, 
but we do not assume that $G$ is planar. For convenience (although this is not 
strictly necessary), we require that edges are straight line segments in the 
embedding, and that except for the vertices at the two endpoints, no other 
vertices lie on an edge. As before, we write $uv$ or~$vu$ for the (undirected) 
edge between $u$ and~$v$.

A \emph{path} of $n$~\emph{steps} in~$G$ is a sequence $(v_0, v_1, \dots, 
v_{n-1}) \in V^n$ such that $v_iv_{i+1}\in E$ for $i = 0, 1, \dots, n-2$, and 
$v_{i+2} \neq v_i$ for $i = 0, 1, \dots,n-3$ (paths are 
\emph{non-backtracking}). If all rotations of the sequence $(v_0, \dots, 
v_{n-1})$ are also paths (so that in particular, $v_0v_{n-1}\in E$), then we 
call the path \emph{closed}. We now order the vertices of~$G$ 
lexicographically by their coordinates in the plane, and define a \emph{loop} 
as a closed path $(v_0, \dots, v_{n-1})$ which is the lexicographically 
smallest element in the collection consisting of all rotations of $(v_0, 
\dots, v_{n-1})$ and all rotations of the reverse sequence $(v_{n-1}, \dots, 
v_0)$ (note that these are all in a way closed paths traversing the same 
loop).

If $\Loop = (v_0,\dots, v_{n-1})$ is a loop or a closed path, we shall make 
the identification $v_j \equiv v_{j\bmod n}$ for all $j\in \Z$. We say that a 
loop~$\Loop$ is \emph{edge-disjoint} if $v_iv_{i+1} \neq v_jv_{j+1}$ for all 
$i,j\in\{0,\dots,n-1\}$ with $i\neq j$. If $\Loop$ is not edge-disjoint, it 
might be the case that the sequence $(v_0,\dots, v_{n-1})$ is periodic, in 
which case we call~$\Loop$ a \emph{periodic loop}. The \emph{multiplicity} 
of~$\Loop$, denoted by~$m(\Loop)$, is its number of steps divided by its 
smallest period. In particular, the multiplicity of every nonperiodic loop 
is~$1$.

\begin{figure}
	\begin{center}
		\includegraphics{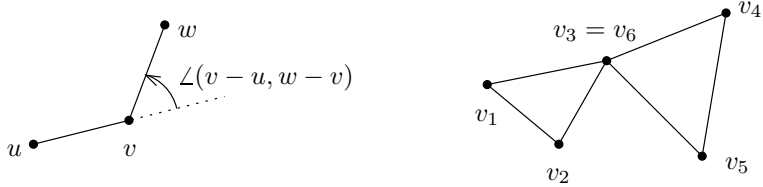}
	\end{center}
	\caption{The turning angle from the vector $v-u$ to the vector $w-v$ 
	(left). The loop $(v_1,v_2,v_3,v_4,v_5,v_6)$ on the right has sign~$-1$, 
	the loop $(v_1,v_2,v_3,v_5,v_4,v_6)$ has sign~$+1$.}
	\label{fig:Angles}
\end{figure}

Given two distinct edges $uv$ and~$vw$, we define $\angle(v-u, w-v) \in 
(-\pi,\pi)$ as the turning angle in the plane from the vector $v-u$ to $w-v$, 
see Figure~\ref{fig:Angles} (left). The \emph{winding angle}~$\alpha(\Loop)$ 
of a loop $\Loop = (v_0, \dots, v_{n-1})$ is simply the sum of all turning 
angles along the loop, that is,
\begin{equation}
	\label{eqn:loopangle}
	\alpha(\Loop) = \sum_{i=0}^{n-1} \angle(v_{i+1}-v_i,v_{i+2}-v_{i+1}).
\end{equation}
We now define the \emph{sign}~$\sgn(\Loop)$ of~$\Loop$ as
\begin{equation}
	\label{eqn:loopsign}
	\sgn(\Loop) = - \exp\Bigl( \frac{i}2 \alpha(\Loop) \Bigr).
\end{equation}
Observe that the winding angle of every loop is a multiple of~$2\pi$ (here we 
use the fact that the edges of~$G$ are straight line segments), hence the sign 
of a loop is either $+1$ or~$-1$.

To define the \emph{signed weight} of a loop, we require a vector $x = 
(x_{uv})_{uv\in E}$ of \emph{edge weights} $x_{uv}\in \R$ (or~$\C$). Given 
these edge weights~$x_{uv}$, the signed weight of a loop $\Loop = 
(v_0,\dots,v_{n-1})$ in~$G$ is defined as
\begin{equation}
	\label{eqn:loopweight}
	w(\Loop; x)
	= \frac{\sgn(\Loop)}{m(\Loop)} \prod_{i=0}^{n-1} x_{v_iv_{i+1}}.
\end{equation}

\begin{remark}
	If a loop is edge-disjoint, it follows from Whitney's 
	formula~\cite{Whitney} that the sign of the loop is $-1$ if and only if 
	the loop crosses itself an odd number of times (see 
	Figure~\ref{fig:Angles}). For loops that are not edge-disjoint, it may not 
	be so clear what is meant by a ``crossing'', but 
	definition~\eqref{eqn:loopsign} makes sense for both kinds of loop. 
	However, if one draws loops in such a way that each visit to an edge is 
	drawn slightly apart from a previous visit, the number of crossings one is 
	forced to draw will always be odd for a loop of sign~$-1$, and even for a 
	loop of sign~$+1$ (see for instance Figure~\ref{fig:Cancellation} 
	below).
\end{remark}

\subsection{Main results for the Ising model}
\label{ssec:isingmodel}

We now return to the Ising model on~$\Z^2$. In this section we will formulate 
our main theorems for the Ising model, which express the free energy density 
and two-point functions in terms of sums over signed loops. Each of our 
theorems will be accompanied by a corollary, which taken together constitute 
Corollary~\ref{cor:sharpness}. Similar results as the ones presented here can 
be obtained for the hexagonal and triangular lattices using the same methods. 
In fact, the method applies to the Ising model on even more general graphs, 
and also allows one to study general $k$-point functions. We intend to go into 
these issues in a subsequent paper.

We start with the free energy density~$f(\beta)$. As we shall prove, 
$f(\beta)$ can be expressed as a sum over those loops in~$\Z^2$ for which the 
origin $o = (0,0)$ is the lexicographically smallest vertex traversed. To be 
more specific, we define $\Loops^\circ_r (\Z^2)$ as the collection of all 
loops $\Loop = (v_0, \dots, v_{r-1})$ of $r$~steps in~$\Z^2$ such that $v_0 = 
o$. We take all edges of~$\Z^2$ to have the same edge weight~$x$. The weights 
$w(\Loop;x)$ of all loops $\Loop\in \Loops^\circ_r (\Z^2)$ are now defined 
by~\eqref{eqn:loopweight}, where by slight abuse of notation, we let $x$ 
denote both the weight of a single edge, and the vector of all edge weights. 
Write
\[
	f^\circ_r(x) = \sum_{\Loop\in \Loops^\circ_r (\Z^2)} w(\Loop;x).
\]

\begin{theorem}
	\label{thm:analyticity}
	The free energy density satisfies
	\begin{equation}
		\label{eqn:freeenergy}
		-\beta f(\beta)	=
		\begin{cases}
			\ln(2\cosh^2\beta) + \sum_{r=1}^\infty f^\circ_r\bigl( \tanh\beta 
			\bigr) & \text{if }\beta\in(0,\beta_c), \\
			2\beta + \sum_{r=1}^\infty f^\circ_r\bigl( \exp(-2\beta) \bigr) & 
			\text{if }\beta\in(\beta_c,\infty).
		\end{cases}
	\end{equation}
\end{theorem}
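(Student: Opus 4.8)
The plan is to handle the two temperature regimes separately, in each case combining a subgraph expansion of the partition function with the central identity of the signed loop method and then passing to the infinite-volume limit using translation invariance of~$\Z^2$. The identity I would rely on (the key combinatorial result proved via Section~\ref{ssec:identities}) rewrites the generating function of \emph{even} subgraphs---those $S\subseteq E$ in which every vertex has even degree---as the exponential of the sum of signed loop weights,
\[
	\sum_{S \text{ even}} x^{\card{S}}
	= \exp\Bigl( \sum_{\Loop\in\Loops(G)} w(\Loop;x) \Bigr).
\]
Here the exponential reflects the factorization of a configuration of $s$~loops into single-loop weights divided by~$s!$; the geometric heart of the matter is that the contributions of configurations using some edge more than once cancel by the signs $\sgn(\Loop)$, leaving exactly the even subgraphs. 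The multiplicity $m(\Loop)$ in~\eqref{eqn:loopweight} is precisely what makes this factorization exact for periodic loops.

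For $\beta\in(0,\beta_c)$ I would expand from high temperature. Using $e^{\beta\sigma_u\sigma_v}=\cosh\beta\,(1+\sigma_u\sigma_v\tanh\beta)$ in~\eqref{eqn:ZIsing} with free boundary conditions and summing over $\sigma\in\Omega_G^\free$, the terms indexed by non-even subgraphs vanish, giving
\[
	Z^\free_{G,\beta}
	= (\cosh\beta)^{\card{E}}\,2^{\card{V}} \sum_{S \text{ even}} (\tanh\beta)^{\card{S}}.
\]
Taking $\card{V}^{-1}\ln$ and inserting the identity with $x=\tanh\beta$ gives
\[
	\frac{1}{\card{V}}\ln Z^\free_{G,\beta}
	= \frac{\card{E}}{\card{V}}\ln\cosh\beta + \ln 2
	+ \frac{1}{\card{V}}\sum_{\Loop\in\Loops(G)} w(\Loop;\tanh\beta).
\]
Since $\card{E}/\card{V}\to 2$ on $\Z^2$, the first two terms tend to $\ln(2\cosh^2\beta)$. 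For the loop term I would group the loops of $\Loops(G)$ by their lexicographically smallest vertex; by translation invariance every sufficiently interior vertex contributes the same origin-anchored sum $\sum_{r=1}^\infty f^\circ_r(\tanh\beta)$, so the average over $\card{V}$ converges to it. This yields the first line of~\eqref{eqn:freeenergy}.

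For $\beta\in(\beta_c,\infty)$ I would instead expand from low temperature under positive boundary conditions, which is permissible because $f(\beta)$ does not depend on the boundary condition. Writing $\sigma_u\sigma_v = 1 - 2\,\mathds{1}[\sigma_u\neq\sigma_v]$ expands $Z^+_{G,\beta}$ about the all-plus ground state: each configuration is encoded by its set of disagreeing edges, which forms a closed contour configuration, i.e.\ an even subgraph of the dual graph~$G^*$, weighted by $e^{-2\beta}$ per dual edge, so that
\[
	Z^+_{G,\beta}
	= e^{\beta\card{E}} \sum_{S \text{ even in } G^*} (e^{-2\beta})^{\card{S}}.
\]
Because $\Z^2$ is self-dual, $G^*$ is again a finite patch of $\Z^2$ with $\card{V^*}/\card{V}\to 1$; applying the identity on~$G^*$ with $x=e^{-2\beta}$ and repeating the limiting argument produces $2\beta+\sum_{r=1}^\infty f^\circ_r(e^{-2\beta})$, which is the second line of~\eqref{eqn:freeenergy}. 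The self-dual point $\beta_c$ is exactly where the two edge weights meet, $\tanh\beta_c = e^{-2\beta_c} = \sqrt2-1$, so in each regime the relevant edge weight is strictly below $\sqrt2-1$.

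The main obstacle is convergence, and specifically convergence up to the self-dual point. A crude count gives at most $\sim 3^r$ non-backtracking loops of length~$r$ through a fixed vertex, so the triangle inequality only yields absolute convergence of $\sum_r f^\circ_r(x)$ for $\card{x}<1/3$, whereas the self-dual point sits at $\sqrt2-1>1/3$. The signs $\sgn(\Loop)$ must therefore be shown to generate enough cancellation to extend convergence all the way to $\card{x}=\sqrt2-1$. I expect this to require relating the loop sum to $-\tfrac12\tr\log(\Id-x\Lambda)$ for the associated Kac--Ward operator~$\Lambda$ and a Fourier/spectral analysis on~$\Z^2$, whose critical edge weight is exactly $\sqrt2-1$. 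The same uniform-in-$G$ bounds on $f^\circ_r(x)$ are what justify interchanging the limit $G\to\Z^2$ with the loop sum and show that the boundary corrections in the grouping step vanish; obtaining these bounds up to $\sqrt2-1$, rather than merely up to $1/3$, is the crux of the argument.
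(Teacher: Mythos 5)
Your proposal is correct and follows essentially the same route as the paper: the high-temperature expansion of $Z^\free_{G,\beta}$ below $\beta_c$ and the dual contour expansion of $Z^+_{G,\beta}$ above $\beta_c$, the identity $Z(x)=\exp\bigl(\sum_\Loop w(\Loop;x)\bigr)$, grouping loops by their lexicographically smallest vertex with vanishing boundary corrections, and dominated convergence in the volume limit. You also correctly identify the crux---that the needed uniform bounds must hold up to edge weight $\sqrt2-1$ rather than $1/3$---which the paper supplies exactly as you anticipate, via the operator-norm bound $\norm{\Lambda(x)}\leq(\sqrt2+1)\supnorm{x}$ on the Kac--Ward transition matrix (Theorem~\ref{thm:keybound}) combined with $f_r(x)=-\frac1{2r}\sum_i\lambda_i^r(x)$.
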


Note that since $f^\circ_r(x) = f^\circ_r(1) \, x^r$, $\sum_r f^\circ_r(x)$ is 
really a power series in the variable~$x$. The power series 
expressions~\eqref{eqn:freeenergy} show directly that the free energy density 
is an analytic function of~$\beta$ on $(0,\beta_c)\cup(\beta_c,\infty)$. Thus, 
in terms of the behaviour of the free energy density, the Ising model can only 
be critical at the self-dual point~$\beta_c$. That~$f(\beta)$ is not analytic 
at~$\beta_c$ follows from Onsager's formula, which we will obtain as a 
corollary to Theorem~\ref{thm:analyticity}:

\begin{corollary}[Onsager's formula]
	\label{cor:Onsager}
	For $\beta\in (0,\beta_c)$ and for $\beta\in (\beta_c,\infty)$, the free 
	energy density $f(\beta)$ is given by the formula
	\[
		-\frac1\beta \frac1{8\pi^2} \int_0^{2\pi} \!\! \int_0^{2\pi} \ln\bigl[ 
		4\cosh^22\beta - 4\sinh2\beta(\cos\omega_1 + \cos\omega_2) \bigr] \, 
		d\omega_1\,d\omega_2.
	\]
\end{corollary}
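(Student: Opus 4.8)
The plan is to deduce the corollary from Theorem~\ref{thm:analyticity} by evaluating the loop sum $\sum_{r=1}^\infty f^\circ_r(x)$ in closed form, for the two edge weights $x=\tanh\beta$ (when $\beta\in(0,\beta_c)$) and $x=\exp(-2\beta)$ (when $\beta\in(\beta_c,\infty)$) that appear there. The starting observation is that $\sum_r f^\circ_r(x)$ is a per-vertex loop sum: since every loop in $\Z^2$ has a unique lexicographically smallest vertex, $\sum_r f^\circ_r(x)$ is the infinite-volume, per-site limit of $\sum_\Loop w(\Loop;x)$ over loops contained in a finite rectangle $G$. I would express this per-site limit through the Kac--Ward transition matrix $\Lambda$ acting on directed edges, for which closed non-backtracking walks correspond to (based, oriented) loops and each transition carries the edge weight $x$ times the half-turning-angle phase $\exp(\tfrac i2\angle)$. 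Then $\tr(\Lambda^r)$ collects the based closed paths of $r$ steps, each weighted by $-\sgn(\Loop)\prod x_{v_iv_{i+1}}$; after accounting for the number of rotations (the smallest period), the two orientations of each loop, and the multiplicity factor $1/m(\Loop)$ that is built into $w$, this reduces to
\[
	\sum_{r=1}^\infty f^\circ_r(x)
	= \frac12\,\lim_{G\to\Z^2} \frac1{\card V}\ln\det(\Id-\Lambda).
\]
This multiplicity bookkeeping is the delicate combinatorial core of the method, and is precisely what the paper's earlier identities are designed to provide.

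Next I would diagonalize. Because $\Lambda$ is translation invariant on $\Z^2$, with the four oriented-edge types $\east,\north,\west,\south$ at each site, the discrete Fourier transform block-diagonalizes it into a family of $4\times4$ matrices $M(\omega)$ indexed by $\omega=(\omega_1,\omega_2)\in[0,2\pi)^2$, whose entries are $x$ times a turning phase times a displacement phase $e^{\pm i\omega_j}$. This turns the per-site log-determinant into an integral,
\[
	\sum_{r=1}^\infty f^\circ_r(x)
	= \frac1{2(2\pi)^2} \int_0^{2\pi}\!\!\int_0^{2\pi}
	  \ln\det\bigl(\Id-M(\omega)\bigr)\,d\omega_1\,d\omega_2,
\]
and a direct computation of the $4\times4$ determinant collapses it to
\[
	\det\bigl(\Id-M(\omega)\bigr)
	= (1+x^2)^2 - 2x(1-x^2)(\cos\omega_1+\cos\omega_2).
\]

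To finish, I would substitute the two edge weights. In both cases the right-hand side factors as an $\omega$-independent prefactor times the Onsager integrand $4\cosh^2 2\beta-4\sinh 2\beta(\cos\omega_1+\cos\omega_2)$: one finds the prefactor $1/(4\cosh^4\beta)$ for $x=\tanh\beta$ and $e^{-4\beta}$ for $x=e^{-2\beta}$. Taking logarithms and integrating, the constant coming from the prefactor cancels exactly against the $\ln(2\cosh^2\beta)$, respectively $2\beta$, term supplied by Theorem~\ref{thm:analyticity}, and what remains is Onsager's formula. I would also record the algebraic identity $(1+x^2)^2-4x(1-x^2)=(x^2+2x-1)^2$, whose positive root $x=\sqrt2-1$ shows that $\det(\Id-M(\omega))$ stays strictly positive for all $\omega$ exactly when $x\neq\sqrt2-1$, i.e.\ when $\beta\neq\beta_c$ by~\eqref{eqn:beta_c}.

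The main obstacle is analytic rather than algebraic: justifying the passage from the finite-volume loop sums to the $\omega$-integral, including convergence of the trace-log series and convergence of the finite-volume per-site log-determinants to the integral. This requires the spectral radius of $M(\omega)$ to stay below~$1$ uniformly in $\omega$, which holds precisely because $x<\sqrt2-1$ throughout both regimes, since $\tanh\beta<\tanh\beta_c=\sqrt2-1$ for $\beta<\beta_c$ and $e^{-2\beta}<e^{-2\beta_c}=\sqrt2-1$ for $\beta>\beta_c$. The determinant evaluation itself is routine, and the subtle combinatorial step (the multiplicity factor) has already been isolated in the paper.
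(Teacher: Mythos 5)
Your proposal follows the paper's own route almost step for step: Theorem~\ref{thm:analyticity}, then the Kac--Ward determinant identity of Theorem~\ref{thm:Zdeterminant} to turn the per-site loop sum into a per-site log-determinant, then Fourier block-diagonalization into $4\times4$ blocks with $\det\bigl(\Id-M(\omega)\bigr)=(1+x^2)^2-2x(1-x^2)(\cos\omega_1+\cos\omega_2)$, then algebra. Your closing algebra is correct: the prefactors $1/(4\cosh^4\beta)$ for $x=\tanh\beta$ and $e^{-4\beta}$ for $x=e^{-2\beta}$ do cancel the terms $\ln(2\cosh^2\beta)$ and $2\beta$ respectively (the paper organizes the same computation via the identity $2x(1-x^2)/(1+x^2)^2=\sinh2\beta/\cosh^22\beta$, which covers both regimes at once), and your factorization $(1+x^2)^2-4x(1-x^2)=(x^2+2x-1)^2$ correctly locates the only possible zero of the integrand at $x=\sqrt2-1$.

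There is, however, one concrete gap in the middle step. The matrix $\Lambda$ in your displayed limit is the transition matrix of a \emph{finite rectangle}~$G$, and that matrix is not translation invariant: the rows indexed by directed edges pointing into $\boundary{G}$ have fewer nonzero entries, so the discrete Fourier transform does not block-diagonalize it, and ``translation invariance on $\Z^2$'' does not apply to the finite-volume object whose determinant you are taking. The analytic input you propose --- uniform control of the spectral radius of $M(\omega)$ --- is necessary for the trace-log expansion and for the integrand to stay bounded away from zero, but it does nothing to repair the boundary rows. The paper's fix is to replace $G$ by the periodized graph $G^\torus$ of Section~\ref{ssec:eigenvalues}, with the wrap-around chains of additional edges given weight $-1$ to compensate their quarter-turns. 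On $G^\torus$ all vertices are alike, so the discrete Fourier transform diagonalizes $x\Lambda^\torus$ exactly into the blocks you wrote; and since for $r$ smaller than the side lengths every loop of length~$r$ on the torus is a translate of a loop in $\Z^2$, the per-site loop sums on $G^\torus$ still converge to $\sum_r f^\circ_r(x)$ (this is equation~\eqref{eqn:computingf} in the paper, justified by $\norm{x\Lambda^\torus}\leq(\sqrt2+1)x<1$ from Theorem~\ref{thm:keybound} and dominated convergence). With that one substitution --- tori instead of rectangles as the finite volumes --- your Riemann-sum passage to the $\omega$-integral goes through verbatim; without it, the Fourier step has nothing to act on.
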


The functions $f$ and $u = \partial(\beta f) / \partial\beta$, which is the 
\emph{internal energy density} of the system, are both continuous functions 
of~$\beta$. However, in~\cite{Onsager} Onsager has shown that the 
\emph{specific heat}, which is the derivative of~$u$ with respect to 
temperature, diverges as $\beta$ approaches~$\beta_c$. This shows that 
$\beta_c$ is indeed critical for the behaviour of the free energy.

Next, we look at the magnetic behaviour of the model by considering the 
one-point and two-point functions above and below~$\beta_c$. We start with the 
case $\beta\in (\beta_c,\infty)$. What we will show is that for fixed $u,v\in 
\Z^2$, the functions $\ave{ \sigma_u }^+_{G, \beta}$ and $\ave{ \sigma_u 
\sigma_v }^+_{G,\beta}$ have infinite-volume limits along rectangles~$G$, 
where the limits can be identified in terms of sums over certain classes of 
loops in~$\Z^{2*}$, the dual graph of~$\Z^2$, defined as follows. Given 
$u,v\in \Z^2$, let $\gamma$ be a self-avoiding path in~$\Z^2$ from $u$ to~$v$ 
(see Figure~\ref{fig:2PointGammas}, left). We call a loop in~$\Z^{2*}$ 
\emph{$uv$-odd} if it crosses~$\gamma$ an odd number of times. Similarly, we 
call a loop in~$\Z^{2*}$ \emph{$u$-odd} if it crosses a self-avoiding 
path~$\gamma$ in~$\Z^2$ from $u$ to~$\infty$ an odd number of times. It is not 
difficult to see that neither of these definitions depends on the particular 
choice of~$\gamma$. We write $\Loops^u_r (\Z^{2*})$ and $\Loops^{uv}_r 
(\Z^{2*})$ for the sets of $u$-odd and $uv$-odd loops in~$\Z^{2*}$ of 
$r$~steps, respectively.

Let~$x$ be the vector of edge weights on~$\Z^{2*}$ such that every edge has 
the weight~$\exp(-2\beta)$, and define the weights of loops in~$\Z^{2*}$ 
by~\eqref{eqn:loopweight}. Set
\[
	f^u_r(x) = \sum_{ \Loop\in \Loops^u_r (Z^{2*}) }  w(\Loop; x);
	\qquad
	f^{uv}_r(x) = \sum_{ \Loop\in \Loops^{uv}_r (\Z^{2*}) }  w(\Loop; x).
\]

\begin{theorem}
	\label{thm:lowTcorr}
	For all $\beta\in(\beta_c,\infty)$ and fixed $u,v\in\Z^2$ ($u\neq v$),
	\[\begin{aligned}
		\lim_{G\to\Z^2} \ave{ \sigma_u }^+_{G,\beta}
		= \exp\biggl( -2\sum_{r=1}^\infty f^u_r(x) \biggr)
		&=: \ave{ \sigma_u }^+_{\Z^2,\beta} > 0; \\
		\lim_{G\to\Z^2} \ave{ \sigma_u\sigma_v }^+_{G,\beta}
		= \exp\biggl( -2\sum_{r=1}^\infty f^{uv}_r(x) \biggr)
		&=: \ave{ \sigma_u\sigma_v }^+_{\Z^2,\beta} > 0.
	\end{aligned}\]
\end{theorem}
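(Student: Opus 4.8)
The plan is to combine the low-temperature (contour) representation of the Ising model under positive boundary conditions with the signed-loop expansion for finite graphs, and then to pass carefully to the limit $G\to\Z^2$. First I would establish a finite-volume formula. Under positive boundary conditions each spin configuration on $G$ is encoded by its set of domain walls, which form an even subgraph of the dual graph $G^{*}$ (closed contours, each dual edge carrying weight $x=\exp(-2\beta)$); writing $\prod_{uv}e^{\beta\sigma_u\sigma_v}=e^{\beta\card{E}}(e^{-2\beta})^{\#\text{unsatisfied edges}}$ and summing over $\Omega_G^{+}$ turns $Z^{+}_{G,\beta}$, up to an explicit prefactor, into the even-subgraph generating function of $G^{*}$. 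Fixing a self-avoiding path $\gamma$ in $\Z^2$ from $u$ to $v$, one has $\sigma_u\sigma_v=(-1)^{\#\text{walls crossing }\gamma}$, so $\ave{\sigma_u\sigma_v}^{+}_{G,\beta}$ is the ratio of the even-subgraph generating function with each $\gamma$-crossing edge weight negated to the unmodified one. The identities of Section~\ref{ssec:identities} rewrite each such generating function as $\exp(\sum_{\Loop}w(\Loop;x))$ over all loops of $G^{*}$, and since the crossing count is additive over any loop decomposition, negating the weights across $\gamma$ multiplies $w(\Loop;x)$ by $(-1)$ raised to the number of crossings of $\Loop$ with $\gamma$. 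Using $(-1)^{\text{odd}}-1=-2$ and $(-1)^{\text{even}}-1=0$, the ratio collapses to
\[
	\ave{\sigma_u\sigma_v}^{+}_{G,\beta}
	=\exp\Bigl(-2\sum_{\Loop\in\Loops^{uv}(G^{*})}w(\Loop;x)\Bigr),
\]
and the one-point function is identical with $\gamma$ a path from $u$ to $\boundary{G}$, producing $u$-odd loops.

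Next I would take $G\to\Z^2$, which requires two convergence statements. The first is that for fixed $x<\sqrt2-1$ the series $\sum_{r}f^{uv}_r(x)=\sum_{r}\sum_{\Loop\in\Loops^{uv}_r(\Z^{2*})}w(\Loop;x)$ converges. The second is that the finite-volume sums over $\Loops^{uv}(G^{*})$ converge to the infinite-volume sum: every $uv$-odd loop of $\Z^{2*}$ eventually lies inside $G^{*}$, while the loops existing only in $G^{*}$ (those using the outer face or boundary structure) must have length comparable to the distance from $\gamma$ to $\boundary{G}$, so their total weight is dominated by the tail of the same series. Here the explicit per-length bounds advertised in the introduction do the work, simultaneously justifying the interchange of limit and summation and supplying a quantitative rate.

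The main obstacle is precisely the convergence of the loop series all the way up to the self-dual value $x=\sqrt2-1$. A crude count of non-backtracking loops crossing a fixed edge grows like $3^{r}$, which would only give convergence for $x<1/3$; since $\sqrt2-1>1/3$, one cannot avoid exploiting the cancellations built into the signs $\sgn(\Loop)$. The way I would capture these cancellations is to recognize, in finite volume, $\sum_{\Loop}w(\Loop;x)$ as a fixed constant times $\log\det(\Id-\Lambda_{G^{*}}(x))$ for a Kac--Ward-type matrix $\Lambda_{G^{*}}$ on the directed dual edges, encoding the weight $x$ together with the turning-angle phases of admissible turns. The $uv$-odd sub-sum is then the difference $c\,[\log\det(\Id-\Lambda_{G^{*}}(x'))-\log\det(\Id-\Lambda_{G^{*}}(x))]$, where $x'$ is obtained from $x$ by negating the weights across $\gamma$. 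Because $x$ and $x'$ differ on only finitely many edges, this difference has a finite limit as $G\to\Z^2$, governed by the infinite-volume symbol obtained from translation invariance via Fourier transform; its analyticity in $x$ persists as long as the explicit $4\times4$ determinant $\det(\Id-\hat\Lambda(x,\omega_1,\omega_2))$ stays nonzero for all $(\omega_1,\omega_2)$, which first fails at $x=\sqrt2-1$. This is the same computation that yields Onsager's formula in Corollary~\ref{cor:Onsager}, and it pins the radius of convergence at the self-dual point, i.e.\ exactly $\beta>\beta_c$.

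Finally, positivity is immediate once convergence is in hand: each weight $w(\Loop;x)$ is real, since the signs are $\pm1$ and $x$ is real, so the exponent $-2\sum_{r}f^{uv}_r(x)$ is a finite real number and its exponential is strictly positive. The entire argument applies verbatim to $\ave{\sigma_u}^{+}_{\Z^2,\beta}$ upon replacing $uv$-odd loops by $u$-odd loops and $\gamma$ by a path from $u$ to infinity.
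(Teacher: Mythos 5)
Your finite-volume argument coincides with the paper's own: the contour (low-temperature) representation, the sign-flipped weights $x'$ across $\gamma$, the identity $\ave{\sigma_u\sigma_v}^+_{G,\beta}=Z_{G^*}(x')/Z_{G^*}(x)$, the collapse of the loop expansion to an exponential of $-2$ times the sum over $uv$-odd loops, and the boundary-vertex trick for the one-point function. The genuine gap is in the convergence analysis. The per-length bound you invoke (Theorem~\ref{thm:keybound}) reads $\abs{f_r(x)}\leq 2\card{V}r^{-1}(\sqrt2+1)^r\supnorm{x}^r$: it carries a volume factor $\card{V}$ and it bounds the signed sum over \emph{all} loops of a \emph{finite} rectangle. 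As stated it applies neither to $f^{uv}_r(x)$ on the infinite lattice nor uniformly in $G$ (where $\card{V}\to\infty$), so it does not yet justify either of your two convergence statements. The paper's extra, and essential, ingredient is geometric: a $uv$-odd loop of length $r$ must enclose $u$ or $v$, hence is confined to a box of side $O(\norm{u-v}+r)$; writing the odd-loop sum as $\tfrac12\sum_{\Loop}\bigl[w(\Loop;x)-w(\Loop;x')\bigr]$ over \emph{all} loops of that finite box (equation~\eqref{eqn:arconv}) makes Theorem~\ref{thm:keybound} applicable there and yields the bound~\eqref{eqn:arbound}, $\abs{f^{uv}_r}\leq 2(\norm{u-v}+r)^2r^{-1}\exp\bigl(-2(\beta-\beta_c)r\bigr)$, which is uniform in the rectangle and summable in~$r$; dominated convergence then delivers the existence of the limit and the convergence of the infinite-volume series simultaneously.

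Your substitute mechanism for ``convergence up to the self-dual point'' --- non-vanishing of $\det\bigl(\Id-\hat\Lambda(x,\omega_1,\omega_2)\bigr)$ for all frequencies --- would not close this gap. To apply Theorem~\ref{thm:Zdeterminant} to $Z_{G^*}(x')$ you need the spectral radius of the finite, non-translation-invariant matrix $\Lambda_{G^*}(x')$ (some of whose entries are negative) to be below~$1$; Fourier diagonalization is unavailable for it, and non-vanishing of a determinant never controls eigenvalue moduli: an eigenvalue of $x\hat\Lambda(\omega)$ can cross the unit circle at a point different from~$1$ without $\det\bigl(\Id-x\hat\Lambda(\omega)\bigr)$ vanishing. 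What does the job --- and is exactly what Theorem~\ref{thm:keybound} provides --- is an operator-norm bound, $\norm{\Lambda(x')}\leq(\sqrt2+1)\supnorm{x'}=(\sqrt2+1)\supnorm{x}$, valid for arbitrary weight vectors and inherited by subgraphs; this is what legitimizes the expansion for both $x$ and $x'$ for every $\beta>\beta_c$. Your finite-rank-perturbation idea for the $G\to\Z^2$ limit of the log-determinant difference is viable in principle, but it would only prove that the limit \emph{exists}; identifying it with $\exp\bigl(-2\sum_{r}f^{uv}_r(x)\bigr)$, which is what the theorem asserts, again requires the uniform summable bounds above.
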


As a corollary to the proof of this theorem, we will obtain that for $\beta\in 
(\beta_c, \infty)$, the two-point functions in the infinite-volume limit stay 
bounded away from~$0$ as $\norm{u-v}\to \infty$:

\begin{corollary}[Positive two-point functions above~$\beta_c$]
	\label{cor:lowTcorr}
	For all $\beta\in(\beta_c,\infty)$,
	\[
		\lim_{\norm{u-v}\to\infty} \ave{\sigma_u\sigma_v}^+_{\Z^2,\beta}
		= \bigl[ \ave{\sigma_o}^+_{\Z^2,\beta} \bigr]^2 > 0.
	\]
\end{corollary}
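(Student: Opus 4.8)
The plan is to use the explicit product formulas of Theorem~\ref{thm:lowTcorr} to reduce the statement to a claim about signed loops in~$\Z^{2*}$ that are simultaneously $u$-odd and $v$-odd, and then to show that such loops are necessarily long, so that their total contribution vanishes as $\norm{u-v}\to\infty$. The first ingredient is a mod-2 intersection identity. Choosing a path $\gamma_{uv}$ from~$u$ to~$v$ and paths $\gamma_u,\gamma_v$ from~$u$ and~$v$ to~$\infty$, and concatenating (the reverse of)~$\gamma_u$, then $\gamma_{uv}$, then~$\gamma_v$ into a single curve from~$\infty$ to~$\infty$, one obtains a closed curve on the sphere $S^2=\R^2\cup\{\infty\}$. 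Since $H_1(S^2;\Z/2)=0$, its mod-2 intersection with any finite loop~$\Loop$ vanishes, which gives the additive relation that the crossing parity of~$\Loop$ with~$\gamma_{uv}$ equals the sum of its parities with~$\gamma_u$ and~$\gamma_v$. Hence $\Loop$ is $uv$-odd precisely when it is $u$-odd or $v$-odd but not both, so that $\Loops^{uv}_r(\Z^{2*})=\Loops^u_r(\Z^{2*})\,\triangle\,\Loops^v_r(\Z^{2*})$ for every~$r$. Summing $w(\,\cdot\,;x)$ over this symmetric difference yields, for each~$r$, the decomposition $f^{uv}_r(x)=f^u_r(x)+f^v_r(x)-2\,g^{uv}_r(x)$, where I write $g^{uv}_r(x):=\sum_{\Loop\in\Loops^u_r\cap\Loops^v_r}w(\Loop;x)$ for the contribution of the loops that are both $u$-odd and $v$-odd.

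The second ingredient is a length bound. A loop that is both $u$-odd and $v$-odd has odd mod-2 winding about each of $u$ and~$v$, hence meets the ray from~$u$ pointing away from~$v$ and the ray from~$v$ pointing away from~$u$; the two contact points lie at distance at least $\norm{u-v}/\sqrt2$ apart, so the loop has diameter at least this much and therefore at least $c\norm{u-v}$ steps for a universal constant $c>0$. Consequently $\Loops^u_r\cap\Loops^v_r=\emptyset$, and $g^{uv}_r(x)=0$, whenever $r<c\norm{u-v}$. Combining this with Theorem~\ref{thm:lowTcorr} and the translation invariance of~$\Z^2$ (which gives $\ave{\sigma_u}^+_{\Z^2,\beta}=\ave{\sigma_v}^+_{\Z^2,\beta}=\ave{\sigma_o}^+_{\Z^2,\beta}$), I would write, with $x=\exp(-2\beta)$,
\[
\begin{aligned}
	\ln\ave{\sigma_u\sigma_v}^+_{\Z^2,\beta}-\ln\bigl[\ave{\sigma_o}^+_{\Z^2,\beta}\bigr]^2
	&= -2\sum_{r=1}^\infty\bigl(f^{uv}_r(x)-f^u_r(x)-f^v_r(x)\bigr) \\
	&= 4\sum_{r\ge c\norm{u-v}} g^{uv}_r(x),
\end{aligned}
\]
where the sum now starts at $r\ge c\norm{u-v}$ precisely because the lower-order terms cancel. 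It then remains to show that this tail tends to~$0$; positivity of the limit is already part of Theorem~\ref{thm:lowTcorr}, since $\ave{\sigma_o}^+_{\Z^2,\beta}>0$.

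For the tail I would invoke the quantitative convergence estimates underlying the proof of Theorem~\ref{thm:lowTcorr}: for each fixed $\beta\in(\beta_c,\infty)$ these provide a geometric bound $\max\{|f^u_r(x)|,|f^v_r(x)|,|f^{uv}_r(x)|\}\le C\rho^r$ with $\rho=\rho(\beta)<1$, valid for all three families of odd-crossing loops with the same rate. Then $|g^{uv}_r(x)|=\tfrac12|f^u_r+f^v_r-f^{uv}_r|\le\tfrac32 C\rho^r$, so the right-hand side above is bounded in absolute value by $6C\rho^{c\norm{u-v}}/(1-\rho)\to0$. This gives $\ave{\sigma_u\sigma_v}^+_{\Z^2,\beta}\to[\ave{\sigma_o}^+_{\Z^2,\beta}]^2>0$, as claimed.

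The main obstacle is exactly this last quantitative step, and it is where the signed structure is indispensable. A naive count of $u$-odd loops of length~$r$ grows like~$3^r$ (non-backtracking walks on the degree-$4$ lattice), so absolute convergence of $\sum_r\sum_\Loop|w(\Loop;x)|$ holds only for $x<1/3$, i.e.\ $\beta>\tfrac12\ln3>\beta_c$; bounding the tail termwise in absolute value therefore cannot reach the whole supercritical range. To cover all $\beta>\beta_c$ one must use the sign cancellations intrinsic to the method, as captured by the geometric-rate estimates from the proof of Theorem~\ref{thm:lowTcorr}, rather than the triangle inequality applied loop by loop. By comparison, the remaining ingredients—the mod-2 identity, the length bound, and translation invariance—are routine.
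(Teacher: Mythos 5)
Your proof is correct, and it shares the paper's overall skeleton --- peel the $u$-odd and $v$-odd contributions off $\sum_r f^{uv}_r(x)$, then kill what is left using the exponential estimates underlying the proof of Theorem~\ref{thm:lowTcorr} --- but the implementation of the splitting is packaged differently. The paper never writes your exact identity $f^{uv}_r = f^u_r + f^v_r - 2g^{uv}_r$; instead it cuts the sum at $r = \norm{u-v}/2$ and observes that below this cutoff a $uv$-odd loop of length~$r$ is confined to one of the two \emph{disjoint} boxes $B^u_r$, $B^v_r$, inside which being $uv$-odd coincides with being $u$-odd (resp.\ $v$-odd); so for small~$r$ the splitting is exact with no error term, and the entire remainder is the tail $\sum_{r\geq\norm{u-v}/2} f^{uv}_r$, handled by the uniform bound~\eqref{eqn:arbound}. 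Your mod-2 intersection identity on the sphere makes the parity bookkeeping global, valid for every~$r$, and pushes all the error into the doubly-odd loops, which you then show are long; this is conceptually tidy, at the cost of an extra geometric lemma (doubly-odd loops have length at least $c\norm{u-v}$) that plays exactly the role of the paper's confinement observation. One caveat on your quantitative step: the bound the paper actually supplies for $f^{uv}_r$ is \eqref{eqn:arbound}, which is not of the clean form $C\rho^r$ with $C$ independent of $u,v$ --- it carries the prefactor $2(\norm{u-v}+r)^2r^{-1}$. This does not break your argument, since you invoke the bound only for $r\geq c\norm{u-v}$, where $\norm{u-v}\leq r/c$ makes the prefactor polynomial in~$r$ and absorbable into a slightly larger $\rho<1$; but as stated your uniform geometric bound overstates what the proof of Theorem~\ref{thm:lowTcorr} gives, and this absorption should be made explicit. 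Your closing remark is on target: both your tail bound and the paper's ultimately rest on the signed estimate of Theorem~\ref{thm:keybound} (via writing odd-loop sums as differences of two full signed sums with weights $x$ and~$x'$), and no termwise absolute-value count could cover all $\beta>\beta_c$.
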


\begin{figure}
	\begin{center}
		\includegraphics{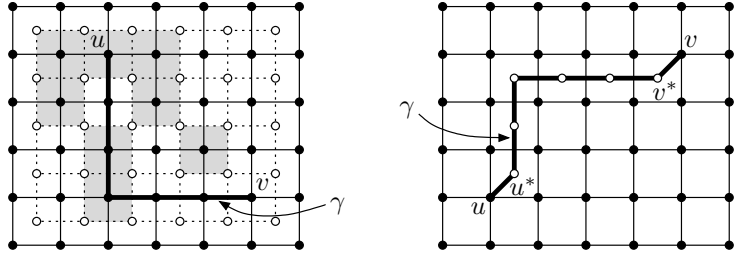}
	\end{center}
	\caption{The paths~$\gamma$ (with bold edges), that we use to study the 
	2-point functions $\ave{\sigma_u \sigma_v}^\bc_{G,\beta}$. The 
	low-temperature case is on the left (spins are~$-1$ in the gray squares, 
	$+1$ in the white regions), the high-temperature case on the right.}
	\label{fig:2PointGammas}
\end{figure}

We now turn to the two-point functions for $\beta\in (0,\beta_c)$. Fix $u,v\in 
\Z^2$, and choose dual vertices $u^*$ and~$v^*$ of~$\Z^{2*}$ such that 
$\norm{u - u^*} = \norm{v - v^*} = 1$. This choice is not unique, but every 
choice of $u^*$ and~$v^*$ will do. Next, choose a self-avoiding path~$\gamma$ 
in~$\Z^{2*}$ from $u^*$ to~$v^*$ (see Figure~\ref{fig:2PointGammas}, right). 
Let $V_\gamma$ denote the set of vertices in~$\gamma$, and let $E_\gamma$ 
denote the union of $\{uu^*, vv^*\}$ with the set of edges traversed 
by~$\gamma$. Write $\Z^2_\gamma$ for the graph obtained from~$\Z^2$ by adding 
the vertices and edges from $V_\gamma$ and~$E_\gamma$ to it.

We define $\Loops^{uu^*}_r (\Z^2_\gamma)$ as the collection of loops 
in~$\Z^2_\gamma$ that visit the edge~$uu^*$ exactly once and have 
$r-\card{E_\gamma}$ steps. Note that by definition, if $\Loop \in 
\Loops^{uu^*}_r (\Z^2_\gamma)$, $r$ only counts the number of steps taken 
by~$\Loop$ along edges of~$\Z^2$, that is, the steps along the edges 
in~$E_\gamma$ are excluded. As our edge weight vector on~$\Z^2_\gamma$ we take 
the vector $x'_\gamma$ such that the weight of every edge in~$E_\gamma$ 
is~$1$, the weight of every edge in~$\Z^2$ which intersects~$\gamma$ is 
$-\tanh\beta$, and the weight of all other edges is~$\tanh\beta$. Set
\[
	f^{uu^*}_r(x'_\gamma)
	= \sum_{ \Loop\in \Loops^{uu^*}_r (\Z^2_\gamma) }  w(\Loop; x'_\gamma),
\]
with $w(\Loop; x'_\gamma)$ defined by~\eqref{eqn:loopweight}. Let $\beta^*$ 
denote the inverse temperature which is dual to~$\beta$, i.e.\ such that 
$\exp(-2\beta^*) = \tanh\beta$.

\begin{theorem}
	\label{thm:highTcorr}
	For all $\beta\in (0,\beta_c)$ and fixed $u,v\in \Z^2$ ($u\neq v$),
	\[
		\lim_{G\to\Z^2} \ave{\sigma_u \sigma_v}^\free_{G,\beta}
		= \biggl( \sum_{r=1}^\infty f^{uu^*}_r (x'_\gamma) \biggr) 
		\ave{\sigma_{u^*} \sigma_{v^*}}^+_{ \Z^{2*}, \beta^* }
		=: \ave{\sigma_u \sigma_v}^\free_{\Z^2,\beta}.
	\]
\end{theorem}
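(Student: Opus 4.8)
The plan is to prove a finite-volume version of the identity first and then pass to the limit $G\to\Z^2$. The starting point is the high-temperature expansion
\[
	\ave{\sigma_u\sigma_v}^\free_{G,\beta}
	= \frac{\sum_{\omega\colon uv\text{-odd}} (\tanh\beta)^{\card{\omega}}}
	       {\sum_{\omega\colon \text{even}} (\tanh\beta)^{\card{\omega}}},
\]
where the denominator runs over all edge subsets $\omega$ of $G$ with every vertex of even degree, and the numerator over those in which exactly $u$ and $v$ have odd degree. The goal is then to recognise the numerator as a product of a ``connecting'' factor equal to $\sum_r f^{uu^*}_r(x'_\gamma)$ and the background denominator, and to recognise the ratio of a suitably twisted denominator to the plain denominator as the dual low-temperature correlation $\ave{\sigma_{u^*}\sigma_{v^*}}^+_{\Z^{2*},\beta^*}$.

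For the second factor I would invoke Kramers--Wannier duality: even subsets $\omega$ of $G$ are exactly the domain walls of $+$-boundary spin configurations $\tau$ on the dual rectangle $G^*$, and under this bijection $(\tanh\beta)^{\card{\omega}}$ is the low-temperature weight of $\tau$ at the dual inverse temperature $\beta^*$ (recall $\exp(-2\beta^*)=\tanh\beta$, so $\beta\in(0,\beta_c)$ forces $\beta^*>\beta_c$). Negating the weights of the edges of $G$ that cross $\gamma$ multiplies the weight of each $\omega$ by $-1$ to the power of the number of sign changes of $\tau$ along $\gamma$, which telescopes to $\tau_{u^*}\tau_{v^*}$; hence the ratio of the twisted to the untwisted denominator equals $\ave{\sigma_{u^*}\sigma_{v^*}}^+_{G^*,\beta^*}$. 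For the first factor I would apply the combinatorial identities of Section~\ref{ssec:identities} on the augmented graph $G_\gamma$ with edge weights $x'_\gamma$: because the only edges joining the dual arc $\gamma$ to $G$ are $uu^*$ and $vv^*$, every even subset of $G_\gamma$ uses either none or all of the bridge $u\to u^*\to\gamma\to v^*\to v$, and every loop entering the dual arc is forced (by non-backtracking and the simplicity of $\gamma$) to cross it from $u^*$ to $v^*$. Separating the loops that traverse the bridge exactly once (these visit $uu^*$ once and constitute $\sum_r f^{uu^*}_r(x'_\gamma)$) from those that do not, the factorisation of the signed weight of a loop configuration into a product over its loops lets the bridge-free loops reassemble, via the same identity, into the twisted denominator, leaving the single-bridge loops as a multiplicative factor.

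The hard part will be the sign bookkeeping that makes this factorisation exact. Three ingredients must be reconciled simultaneously: the artificial signs introduced by negating the edges crossing $\gamma$, the geometric signs $\sgn(\Loop)$ coming from Whitney's formula, and the multiplicity factors $m(\Loop)$ in~\eqref{eqn:loopweight} whose omission was precisely Vdovichenko's error. I expect the delicate point to be showing that the bridge is effectively traversed \emph{once}, so that the connecting contribution is the linear sum $\sum_r f^{uu^*}_r(x'_\gamma)$ rather than an exponential, and that after accounting for the crossing-edge signs the numerator of the high-temperature ratio is reproduced with the correct overall sign. Once the finite-volume identity is established, the passage to $G\to\Z^2$ should be routine given the earlier machinery: the second factor converges to $\ave{\sigma_{u^*}\sigma_{v^*}}^+_{\Z^{2*},\beta^*}$ by Theorem~\ref{thm:lowTcorr}, while for the first factor one uses the convergence bounds on loop sums (as in the proof of Theorem~\ref{thm:analyticity}) to replace the finite-volume loops in $G_\gamma$ by loops in $\Z^2_\gamma$ and to justify interchanging the limit with the sum over $r$, which is legitimate precisely because $\tanh\beta$ is subcritical when $\beta<\beta_c$.
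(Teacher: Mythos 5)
Your overall architecture coincides with the paper's: both reduce the finite-volume two-point function to a product of a sum over loops through $uu^*$ and the ratio $Z_G(x')/Z_G(x)$, identify that ratio with the dual low-temperature correlation via the sign-twisting argument of \eqref{eqn:lowTaveRatio}, and then pass to the limit $G\to\Z^2$ using Theorem~\ref{thm:lowTcorr} for the second factor and dominated convergence for the first. However, the central step --- the exact finite-volume factorization in which the bridge contribution enters \emph{linearly} --- is precisely what you describe as ``the hard part'' and leave unproved, and this is a genuine gap rather than a routine detail. The obstruction is that the combinatorial identities of Section~\ref{ssec:identities} apply to unrestricted generating functions (sums over \emph{all} even subgraphs), whereas the numerator you need is the restricted sum over even subgraphs of $G_\gamma$ that contain $E_\gamma$. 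Applying Theorem~\ref{thm:Zexploopweights} directly to $Z_{G_\gamma}\bigl(x'_\gamma(1)\bigr)$ puts the bridge loops inside an exponential, and your proposed remedy (``the bridge-free loops reassemble, leaving the single-bridge loops as a multiplicative factor'') is exactly the assertion that needs proof. The paper's resolution is an auxiliary-variable device absent from your proposal: assign the single edge $uu^*$ the weight $t$, note that every even subgraph of $G_\gamma$ uses $uu^*$ at most once, so that $Z_{G_\gamma}\bigl(x'_\gamma(t)\bigr)$ is affine in $t$ and the restricted sum equals $\partial Z_{G_\gamma}\bigl(x'_\gamma(t)\bigr)/\partial t$ at $t=0$ (equation~\eqref{eqn:highTaveDeriv}); then, for $\abs{t}$ small, continuity of eigenvalues keeps the spectral radius of $\Lambda_\gamma\bigl(x'_\gamma(t)\bigr)$ below~$1$, Theorem~\ref{thm:Zdeterminant} gives the loop expansion, and term-by-term differentiation at $t=0$ annihilates every loop visiting $uu^*$ zero or at least two times, producing exactly the linear factor $\sum_r f^{uu^*}_r(x'_\gamma)$ multiplying $Z_G(x')$.

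If you wanted to salvage your direct route instead, you would need a restricted version of Proposition~\ref{pro:loopcancellation}: verify that the sign-reversing bijection preserves the class of configurations whose total number of visits to $uu^*$ is exactly one (it does, since the surgery never changes the multiset of traversed labelled steps), so that non-edge-disjoint configurations cancel within that class, and then justify the rearrangement of the resulting infinite sums into the claimed product; none of this appears in your proposal. A smaller inaccuracy: for the infinite-volume limit of the first factor, the relevant estimate is not the translation-invariance argument from the proof of Theorem~\ref{thm:analyticity}, but the confinement of loops through $uu^*$ to the box $B^{uv}_r$ combined with the uniform bound $a_r(R_\gamma;x'_\gamma)\leq 16\,(\tanh\beta/\tanh\beta_c)^r$ of \eqref{eqn:argammaBound}, which the paper obtains by writing each such loop as a path from $v$ to $u$ concatenated with $\gamma$ and bounding entries of $\Lambda_R^r(x')$ by the operator norm from Theorem~\ref{thm:keybound}.
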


The term $\smash{ \ave{\sigma_{u^*} \sigma_{v^*}}^+_{ \Z^{2*}, \beta^* } }$ 
appearing here is the infinite-volume limit of a two-point function for an 
Ising model on the dual square lattice~$\Z^{2*}$ at the dual inverse 
temperature~$\beta^*$ with positive boundary conditions. It can be expressed 
in terms of signed loops by means of Theorem~\ref{thm:lowTcorr}. As an aside, 
we note that the result in Theorem~\ref{thm:highTcorr} simplifies when $u$ 
and~$v$ are on the same face of~$\Z^2$ (i.e. $\norm{u-v} = 1$), since then we 
can take $u^* = v^*$, so that $\sigma_{u^*} \sigma_{v^*} = 1$. Moreover, since 
the path~$\gamma$ is void in this case, none of the edge weights will be equal 
to~$-\tanh\beta$.

As a corollary to Theorem~\ref{thm:highTcorr} we will obtain that the 
two-point functions decay exponentially to~0 with the distance $\norm{u-v}$ 
for $\beta\in (0,\beta_c)$, which together with Corollaries \ref{cor:Onsager} 
and~\ref{cor:lowTcorr} implies Corollary~\ref{cor:sharpness}:

\begin{corollary}[Decaying two-point functions below~$\beta_c$]
	\label{cor:highTcorr}
	For all $\beta\in(0,\beta_c)$ and fixed $u,v\in \Z^2$, we have that
	\[
		0\leq \ave{\sigma_u \sigma_v}^\free_{\Z^2,\beta} \leq 16 \sum_{r\geq 
		\norm{u-v}} \Bigl( \frac{\tanh\beta}{\tanh\beta_c} \Bigr)^r.
	\]
\end{corollary}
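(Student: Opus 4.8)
The plan is to read the estimate off the factorisation in Theorem~\ref{thm:highTcorr}, reducing everything to a geometric bound on the single-edge loop sums $f^{uu^*}_r(x'_\gamma)$. First I would dispose of the two easy ingredients. Nonnegativity is immediate: the finite-volume two-point function $\ave{\sigma_u\sigma_v}^\free_{G,\beta}$ is a ratio of high-temperature sums with all edge weights $\tanh\beta>0$, hence nonnegative, and so is its infinite-volume limit. For the upper bound, observe that the dual factor $\ave{\sigma_{u^*}\sigma_{v^*}}^+_{\Z^{2*},\beta^*}$ appearing in Theorem~\ref{thm:highTcorr} lies in $(0,1]$: it is positive by Theorem~\ref{thm:lowTcorr}, and at most~$1$ since it is a limit of correlations of two $\pm1$-valued spins. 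Consequently $\ave{\sigma_u\sigma_v}^\free_{\Z^2,\beta}\le\sum_{r=1}^\infty\abs{f^{uu^*}_r(x'_\gamma)}$, and it remains to bound this series.

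Next I would analyse the individual loops $\Loop\in\Loops^{uu^*}_r(\Z^2_\gamma)$. Since $\Loop$ traverses $uu^*$ exactly once it cannot be periodic, so $m(\Loop)=1$; and because $uu^*$ and $vv^*$ are the only edges joining the dual handle to~$\Z^2$, traversing $uu^*$ once forces $\Loop$ to cross the handle $uu^*,\gamma,vv^*$ once and to return from~$v$ to~$u$ along a non-backtracking walk in~$\Z^2$ of exactly $r$ steps. Every primal edge carries weight $\pm\tanh\beta$ and every handle edge carries weight~$1$, so $\abs{w(\Loop;x'_\gamma)}=(\tanh\beta)^r$. Two consequences follow: the return walk needs at least $\norm{u-v}$ steps, whence $f^{uu^*}_r(x'_\gamma)=0$ for $r<\norm{u-v}$; and $\abs{f^{uu^*}_r(x'_\gamma)}\le(\tanh\beta)^r\,\#\Loops^{uu^*}_r(\Z^2_\gamma)$. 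It is precisely this naive estimate that is too weak: the number of non-backtracking walks of length~$r$ grows like~$3^r$, and since $3>1/\tanh\beta_c=\sqrt2+1$, the resulting series diverges for $\beta$ close to~$\beta_c$.

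The heart of the matter, and the main obstacle, is therefore to extract the cancellations built into the signs $\sgn(\Loop)$. For this I would use the factorisation of the sign into per-turn phases $\sgn(\Loop)=-\prod_i\exp(\tfrac{i}{2}\angle(\cdot,\cdot))$ to rewrite the signed walk sum defining $f^{uu^*}_r(x'_\gamma)$ as a matrix element of $\Lambda^r$, where $\Lambda$ is the Kac--Ward transition operator on directed edges of~$\Z^2$ with entries $\tanh\beta\,\exp(\tfrac{i}{2}\angle)$ on admissible non-backtracking transitions (the $\gamma$-crossings contribute only a $\pm1$ twist that leaves all moduli equal to $\tanh\beta$). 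Diagonalising $\Lambda$ by the Fourier transform on~$\Z^2$ turns it into a $4\times4$ symbol $\hat\Lambda(\omega)=\tanh\beta\,M(\omega)$; the self-dual point enters exactly here, as $\sup_\omega\rho(M(\omega))=1/\tanh\beta_c=\sqrt2+1$, so that every eigenvalue of $\hat\Lambda(\omega)$ has modulus at most $\tanh\beta/\tanh\beta_c<1$ for $\beta<\beta_c$. This \emph{spectral} bound, rather than the larger operator norm, is what yields $\abs{f^{uu^*}_r(x'_\gamma)}\le16\,(\tanh\beta/\tanh\beta_c)^r$, the factor~$16$ accounting for the at most $4\times4$ choices of directed edges leaving~$v$ and entering~$u$; the delicate point is to control the non-normality of $\hat\Lambda(\omega)$ and the Fourier integral so that the clean geometric rate survives with a manageable constant. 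Since this estimate is already the engine behind the convergence asserted in Theorem~\ref{thm:highTcorr}, in practice I would quote it from that proof. Summing $16\,(\tanh\beta/\tanh\beta_c)^r$ over $r\ge\norm{u-v}$ and combining with nonnegativity then completes the proof.
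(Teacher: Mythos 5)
Your proof, in the form you would actually execute it (quoting the bound $16(\tanh\beta/\tanh\beta_c)^r$ from the proof of Theorem~\ref{thm:highTcorr}), coincides with the paper's: nonnegativity from~\eqref{eqn:highTave}, the factorization of Theorem~\ref{thm:highTcorr} with the dual factor bounded between $0$ and~$1$ via Theorem~\ref{thm:lowTcorr}, the observation that a loop visiting $uu^*$ once must contain a non-backtracking path in~$\Z^2$ from $v$ to~$u$ and hence has length at least~$\norm{u-v}$, and summation of the geometric series. The one point worth correcting is your characterization of the ``engine''. The bound~\eqref{eqn:argammaBound} is \emph{not} a spectral-radius bound extracted from a Fourier symbol, and the paper never confronts the non-normality issue you flag. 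Instead, each loop visiting~$uu^*$ once is written as $\Path\concat\gamma$ with $\Path$ a length-$r$ path in~$R$ from $v$ to~$u$, so $a_r(R_\gamma;x'_\gamma)$ is a sum of $16$ entries of $\Lambda_R^r(x')$ (up to unimodular phases from the handle), and one bounds the maximum entry by the operator norm and uses submultiplicativity: $\norm{\Lambda_R^r(x')}\leq\norm{\Lambda_R(x')}^r\leq\bigl((\sqrt2+1)\tanh\beta\bigr)^r$ by Theorem~\ref{thm:keybound}; that theorem is in turn proved not by Fourier analysis but by a column permutation of the torus matrix~$\Lambda^\torus$ that makes it block diagonal with \emph{Hermitian} $4\times4$ blocks, so its operator norm is exactly $\sqrt2+1$. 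This distinction matters at exactly the two places where your sketch would break down: (i)~the weights~$x'$ carry $-1$~twists on the edges crossing~$\gamma$, and this sign pattern has nontrivial holonomy around $u^*$ and~$v^*$, so it cannot be gauged away; consequently $\Lambda(x')$ is not translation invariant and is not diagonalized by the Fourier transform, whereas the operator-norm bound depends only on $\supnorm{x'}=\tanh\beta$ and is blind to the signs; (ii)~even in the translation-invariant case, a uniform bound on the spectral radius of a non-normal symbol does not by itself give entrywise decay of~$\Lambda^r$ at rate~$\rho^r$ with a clean constant such as~$16$. So your instinct to quote the bound rather than rederive it spectrally is the right one, and with that substitution your argument is the paper's argument.
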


Note that Corollaries \ref{cor:lowTcorr} and~\ref{cor:highTcorr} show 
contrasting behaviour of the two-point functions above and below~$\beta_c$: at 
low temperatures they stay bounded away from~$0$, while for high temperatures 
they decay to~$0$. However, we have used different boundary conditions above 
and below the critical point. Ideally, we would like to use the signed loop 
method to show that for all $\beta\neq\beta_c$, the infinite-volume limit of 
the two-point functions is the same for both boundary conditions. We leave 
this issue for a subsequent paper.

\subsection{Additional edges and loop length}
\label{ssec:additionaledges}

The set of edges~$E_\gamma$ that we introduced above to formulate our 
Theorem~\ref{thm:highTcorr} is an example of what we call \emph{additional 
edges}. As this example shows, we occasionally need these additional edges in 
our applications. They act as ``shortcuts'' that our loops can follow, and in 
general, just as we did above, we do not want to count the steps taken by our 
loops along these shortcuts.

Another example of the use of additional edges is in our proofs leading to 
Corollary~\ref{cor:Onsager}, in which we compare loops in~$\Z^2$ with loops on 
a torus. Here we face a problem, because our methods and theorems about signed 
loops (to be presented below) require that the graph we work on is embedded in 
the plane. As a solution, we will not work on the torus directly, but on a 
representation of it in the plane. As our representation, we take a rectangle 
in~$\Z^2$ with opposite sides connected by additional edges, as illustrated in 
Figure~\ref{fig:Torus}. In this example, the additional edges do not 
correspond to edges that can be traversed by a loop on the torus, and this is 
the reason why steps taken along the additional edges again should not be 
counted.

\begin{figure}
	\begin{center}
		\includegraphics{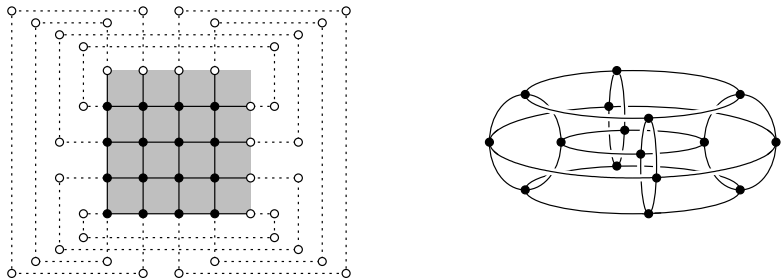}
	\end{center}
	\caption{A square lattice wrapped around a torus (right) and a 
	representation of it in the plane (left). The gray square corresponds to 
	the torus, the dotted lines and open circles are the additional edges and 
	vertices.}
	\label{fig:Torus}
\end{figure}

In general, these considerations lead us to allow the edge set~$E$ of the 
graph $G = (V,E)$ we work on to be divided into a set~$E_A$ of additional 
edges and a set $E\setminus E_A$ of edges that we call \emph{representative}. 
For reasons that will become clear, we must impose that the set~$E_A$ is such 
that the graph $(V,E_A)$ is free of cycles, but otherwise, the edge set can in 
principle be any subset of edges. We now define the \emph{length}~$r(\Loop)$ 
of a loop $\Loop = (v_0,\dots,v_{n-1})$ as the number of~$i$ in $\{0, \dots, 
n-1\}$ such that $v_iv_{i+1} \in E\setminus E_A$. Note the distinction between 
the \emph{length} of a loop and its \emph{number of steps}.

\subsection{The combinatorial identities}
\label{ssec:identities}

We next formulate our combinatorial identities about signed loops for a fixed 
finite graph $G = (V,E)$ embedded in the plane, satisfying the same 
assumptions as in Section~\ref{ssec:signedloops}. In particular, recall that 
edges are straight line segments, and that we do not assume $G$ is planar, 
which implies that two edges can intersect in a point which is not a vertex. 
In this case, we say that the two edges \emph{cross} each other.

We call a subset $F$ of~$E$ \emph{even} if every vertex in the subgraph 
$(V,F)$ of~$G$ has even degree (the empty set is also even). By~$C_F$ we 
denote the total number of unordered pairs of edges in~$F$ that cross each 
other. Given a vector $x = (x_{uv})_{uv\in E}$ of edge weights on~$G$, we now 
define the generating function~$Z(x)$ of even subgraphs of~$G$ as
\begin{equation}
	\label{eqn:Z}
	Z(x)
	= \sum_{\text{even }F\subset E} (-1)^{C_F} \prod_{uv \in F} x_{uv}.
\end{equation}
If the graph~$G$ is planar, we can embed it in such a way that no edges cross 
each other, so that $C_F = 0$ for all~$F$, but in general, an even $F \subset 
E$ may give a negative contribution to the right-hand side of~\eqref{eqn:Z}. 
This makes our generating function different from the one usually studied in 
the literature. Note as a consequence that different embeddings of the same 
(abstract) graph can lead to different functions~$Z(x)$. Since we identify~$G$ 
with its embedding, this last fact does not concern us here.

Our combinatorial identities express the generating function~$Z(x)$ in terms 
of sums over the signed loops in~$G$, with their weights defined 
by~\eqref{eqn:loopweight}. This is what allows us to study the Ising model in 
terms of signed loops, since the free energy and the two-point functions of 
the Ising model can be expressed in terms of graph generating functions, as we 
shall see. Our first identity:

\begin{theorem}
	\label{thm:Zexploopweights}
	For $uv\in E$, let $d_{uv}$ denote the maximum of the degrees of $u$ 
	and~$v$ in the graph~$G$. If $\abs{x_{uv}} < (d_{uv}-1)^{-1}$ for all 
	$uv\in E$, then
	\begin{equation}
		\label{eqn:Zexploopweights}
		Z(x)
		= \exp\biggl( \sum_{\text{$\Loop$ in~$G$}} w(\Loop; x) \biggr).
	\end{equation}
\end{theorem}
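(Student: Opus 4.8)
The plan is to expand the right-hand side of~\eqref{eqn:Zexploopweights} as a power series in the edge weights and match it coefficient-by-coefficient with the even-subgraph expansion~\eqref{eqn:Z} defining $Z(x)$; since $G$ is finite, $Z(x)$ is a polynomial, so it suffices to identify the coefficient of each monomial. First I would establish absolute convergence of $\sum_{\Loop} w(\Loop;x)$ under the hypothesis $\abs{x_{uv}} < (d_{uv}-1)^{-1}$. Counting loops by their number of steps, the number of non-backtracking closed walks of $n$ steps through a fixed directed edge is at most $(d-1)^{n-1}$ with $d=\max_{uv}d_{uv}$, while each contributes a term of modulus at most $(d-1)^{-n}$ by the hypothesis; summing over $n$ gives a convergent geometric series, so $\sum_{\Loop}\abs{w(\Loop;x)}<\infty$. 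This justifies writing $\exp\bigl(\sum_\Loop w(\Loop;x)\bigr)=\sum_{s\ge0}\frac1{s!}\bigl(\sum_\Loop w(\Loop;x)\bigr)^s$ and rearranging the resulting absolutely convergent series; each $w(\Loop;x)$ is a monomial in the $x_{uv}$ with coefficient $\pm 1/m(\Loop)$, so the whole expression is a genuine power series in the edge weights.

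Next I would reorganize this series geometrically. Expanding the $s$-th power produces a sum over ordered $s$-tuples of loops $(\Loop_1,\dots,\Loop_s)$; together these traverse a multiset of edges and, at every vertex, pair up the incident half-edges they use (each passage through a vertex pairs an incoming half-edge with an outgoing one). It is convenient to pass to \emph{labeled} configurations: labeling each loop with a base half-edge cancels the factor $1/m(\Loop)$ coming from periodic loops, and labeling the loops $1,\dots,s$ cancels the $1/s!$, so that every combinatorial object is counted exactly once. In this picture a configuration is encoded by a multiset of edges together with a \emph{pairing} of half-edges at each vertex that glues the edges into closed non-backtracking loops, and its contribution is the product of the turning-angle phases from~\eqref{eqn:loopsign} times the corresponding monomial in the $x_{uv}$.

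The heart of the matter is then a twofold cancellation, grouping terms by their underlying multiset of undirected edges. First, any configuration that traverses some edge two or more times must cancel: here I would construct a sign-reversing involution that, using the lexicographically smallest multiply-traversed edge, re-pairs the half-edges at its two endpoints so as to negate the product of loop signs while preserving the edge multiset and the monomial. This shows that every non-squarefree monomial, and in particular every monomial whose support is not even, has vanishing coefficient. Second, for a fixed even subgraph $F$ in which each edge occurs once, I would prove that the signed sum over all decompositions of $F$ into loops collapses to $(-1)^{C_F}$. This is a multi-loop form of Whitney's formula: summing the products of the phases $\exp(\tfrac{i}{2}\angle)$ over the admissible pairings at each vertex, a local trigonometric identity makes the phases telescope so that only the parity of the number of crossings of $F$ survives, yielding precisely the factor $(-1)^{C_F}\prod_{uv\in F}x_{uv}$ of~\eqref{eqn:Z}. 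Matching these two claims against~\eqref{eqn:Z} finishes the proof.

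The main obstacle is the sign bookkeeping in this last step, and it has two linked pieces: defining the involution on repeated-edge configurations canonically and checking that the re-pairing flips the product of loop signs by exactly $-1$ (rather than some other phase), and proving the local Whitney-type identity at a vertex of arbitrary even degree so that the turning-angle phases combine into the crossing sign. Ensuring that the multiplicity factors $1/m(\Loop)$ and the symmetry factor $1/s!$ are absorbed exactly---which, as the introduction stresses, is precisely where Vdovichenko's argument failed---is the delicate accounting that the labeled-configuration formalism is designed to make rigorous, whereas the convergence estimate and the formal exponential expansion are routine.
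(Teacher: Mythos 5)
Your outline reconstructs the architecture of the paper's own proof: the paper likewise passes to \emph{labelled} loop configurations to absorb the factors $1/m(\Loop)$ and $1/s!$, cancels every configuration that uses some edge more than once by a sign-reversing bijection keyed to a distinguished multiply-used edge (Proposition~\ref{pro:loopcancellation}), and identifies the surviving edge-disjoint configurations with even subgraphs through the identity $\sum_{\{\Loop_1,\dots,\Loop_s\}\in\Decomps(F)}\sgn\{\Loop_1,\dots,\Loop_s\}=(-1)^{C_F}$ (Proposition~\ref{pro:edgedisjoint}). So the route is the right one, but two of your steps are genuine gaps. First, your convergence estimate is false as written: the hypothesis bounds $\abs{x_{uv}}$ by $(d_{uv}-1)^{-1}$ with the \emph{local} quantity $d_{uv}$, not by $(d-1)^{-1}$ with $d$ the global maximum degree, so an edge whose endpoints both have degree $2$ may carry weight close to $1$ even when $d$ is large, and a walk's weight is then nowhere near $(d-1)^{-n}$. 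Moreover, even granting your bounds, the product of your count $(d-1)^{n-1}$ and your weight bound $(d-1)^{-n}$ is constant in $n$, so the sum over $n$ would \emph{not} converge. The repair is local bookkeeping with a uniform margin, as in the paper: after a step along $uv$ there are at most $d_{uv}-1$ non-backtracking continuations, and by finiteness of $G$ there is $\gamma<1$ with $(d_{uv}-1)\abs{x_{uv}}\le\gamma$ for every edge, whence the sum of $\abs{w(\Loop;x)}$ over loops of $n$ steps is at most $\card{V}\gamma^n$.

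Second, and more substantially, the two cancellation claims carry the entire content of the theorem, and you announce them rather than prove them; your indicated mechanisms are also not quite the ones that work. For the even-subgraph identity there is no local trigonometric telescoping of phases: the paper first invokes Whitney's theorem to rewrite the sign of an edge-disjoint loop as the crossing parity $(-1)^{C_V(\Loop)+C_E(\Loop)}$, uses that two distinct closed curves in the plane cross each other evenly often to decouple the loops, and then reduces to a purely combinatorial count at each vertex, proved by induction on the degree $2k$: the number of pairings producing an even number of crossings at that vertex exceeds the number producing an odd number by exactly one, via $N^+_k=kN^+_{k-1}+(k-1)N^-_{k-1}$ and $N^-_k=kN^-_{k-1}+(k-1)N^+_{k-1}$. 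For the involution, the point you flag as the obstacle is exactly where the work lies: one needs three separate winding-angle computations according to whether the two distinguished traversals lie on different loops (merge), on one loop in the same direction (split), or on one loop in opposite directions; the last case forces reversing a sub-path and rests on the facts that $\alpha\bigl((u,v)\concat\Path_1\concat(v,u)\bigr)$ is an odd multiple of $\pi$, while reversal preserves the sign of a closed path because winding angles of closed paths are multiples of $2\pi$. You would also need to check that your selection rule is stable under the re-pairing so that the map really is an involution, and that your base-half-edge labelling counts each unlabelled configuration with exactly the factor needed to absorb $1/m(\Loop)$ and $1/s!$; the paper instead labels every step subject to condition~\ref{condition3}, yielding the count $n!/\bigl(\prod_i m(\Loop_i)\prod_i k_i!\bigr)$, which is precisely what makes the accounting close. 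Until these points are carried out, what you have is a faithful outline of the paper's proof rather than a proof.
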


We will show that under the condition of Theorem~\ref{thm:Zexploopweights}, 
the loop weights are absolutely summable, so that the order of summation does 
not matter. In particular, let $\Loops_r$ be the collection of all loops of 
length~$r$ in~$G$, and let
\begin{equation}
	\label{eqn:fr(x)}
	f_r(x) = \sum_{\Loop\in\Loops_r} w(\Loop; x).
\end{equation}
Then Theorem~\ref{thm:Zexploopweights} implies that $Z(x)$ equals $\exp\bigl( 
\sum_r f_r(x) \bigr)$, but we claim that this latter equality already holds 
under a significantly weaker condition.

This condition can be formulated in terms of the \emph{transition 
matrix}~$\Lambda(x)$, which we now introduce. If $uv$ is an edge of~$G$, then 
by $\dir{uv}$ we will denote the \emph{directed} edge from $u$ to~$v$. The 
matrix~$\Lambda(x)$ will be indexed by the directed \emph{representative} 
edges of~$G$. Given two directed representative edges $\dir{uv}$ 
and~$\dir{wz}$, we say that $v$ is \emph{linked to}~$w$ if either $v=w$, or 
there exists a sequence of distinct \emph{additional} edges $v_1v_2, v_2v_3, 
\dots, v_{n-1}v_n$ such that $v=v_1$ and $v_n=w$. In the former case, if $v=w$ 
and $u\neq z$, we write
\[
	\angle(\dir{uv}, \dir{wz}) = \angle(v-u, z-w)
\]
for the turning angle from $\dir{uv}$ to~$\dir{wz}$. In the latter case, the 
sequence $(v_1,\dots,v_n)$ is a path (the \emph{chain}) linking $v$ to~$w$, 
passing through additional edges only, and we say that ``$v\leadsto w$ via 
$(v_1,\dots,v_n)$''. By our assumption that the additional edges form no 
cycles, there can be at most one such path. Hence, without ambiguity, if $v$ 
is linked to~$w$ in this way, we can define
\begin{multline*}
	\angle(\dir{uv}, \dir{wz})
	= \angle(v-u, v_2-v) \\\null
	+ \sum_{i=1}^{n-2} \angle(v_{i+1}-v_i, v_{i+2}-v_{i+1})
	+ \angle(w-v_{n-1}, z-w).
\end{multline*}

The transition matrix~$\Lambda(x)$ is now defined as follows. Write 
$\Lambda_{\dir{uv}, \dir{wz}}(x)$ for the entry of the matrix with row index 
$\dir{uv}$ and column index~$\dir{wz}$. Then
\begin{equation}
	\label{eqn:Lambda(x)}
	\Lambda_{\dir{uv},\dir{wz}}(x) = \begin{cases}
		x_{uv} e^{i\angle(\dir{uv},\dir{wz})/2}
		& \text{if $v = w$ and $u\neq z$}; \\
		x_{uv} \prod\limits_{i=1}^{n-1} x_{v_iv_{i+1}} 
		e^{i\angle(\dir{uv},\dir{wz})/2}
		& \text{if $v \leadsto w$ via $(v_1,\dots,v_n)$}; \\
		0 & \text{otherwise}.
	\end{cases}
\end{equation}
Let $\lambda_i(x)$, $i = 1, 2, \dots, 2\card{E\setminus E_A}$, denote the 
eigenvalues of~$\Lambda(x)$, and let $\rho(x) = \max_i \abs{\lambda_i(x)}$ be 
its spectral radius. We will show that if $\rho(x) < 1$, then the $f_r(x)$ are 
absolutely summable. This leads to our second identity, which forms the core 
of the signed loop approach:

\begin{theorem}
	\label{thm:Zdeterminant}
	If $\rho(x) < 1$, then
	\begin{equation}
		\label{eqn:Zdeterminant}
		Z(x)
		= \exp\biggl( \sum_{r=1}^\infty f_r(x) \biggr)
		= \sqrt{\det\bigl( \Id - \Lambda(x) \bigr)}.
	\end{equation}
\end{theorem}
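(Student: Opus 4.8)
The plan is to prove the two equalities separately, with the determinant identity as the heart of the argument. The key observation is a \emph{polynomial} identity relating the loop sums to traces of powers of the transition matrix: for every $r\ge 1$,
\[
	\tr\bigl(\Lambda(x)^r\bigr) = -2r\,f_r(x).
\]
To establish this I would expand $\tr(\Lambda(x)^r)$ as a sum over closed sequences of $r$ directed representative edges, which by the structure of~\eqref{eqn:Lambda(x)} are exactly the closed non-backtracking walks in~$G$ using $r$ representative edges (chains of additional edges being absorbed into single matrix entries; the acyclicity of~$E_A$ guarantees these chains are unique, so there is no ambiguity). Each such rooted walk traces a loop~$\Loop$ of length~$r$, and the product of matrix entries along it equals $\bigl(\prod_i x_{v_iv_{i+1}}\bigr)\,e^{i\alpha(\Loop)/2}$, since the phase factors accumulate to half the winding angle and the weight factors account for all edges, representative and additional. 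By~\eqref{eqn:loopsign} this phase equals $-\sgn(\Loop)$, so each rooted walk contributes $-\sgn(\Loop)\prod_i x_{v_iv_{i+1}}$.

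It then remains to count, for a fixed loop~$\Loop$ of length~$r$, how many rooted closed walks trace it, and this is the point where Vdovichenko's multiplicity enters. The walk may be rooted at any of the~$r$ representative directed edges and traversed in either of the two orientations, but if~$\Loop$ is periodic these rootings coincide in blocks of size~$m(\Loop)$, leaving exactly $2r/m(\Loop)$ distinct rooted walks; both orientations contribute the same phase because $\alpha(\Loop)$ is a multiple of~$2\pi$. Summing $-\sgn(\Loop)\prod_i x_{v_iv_{i+1}}$ over these $2r/m(\Loop)$ walks and then over all $\Loop\in\Loops_r$ gives
\[
	\tr\bigl(\Lambda(x)^r\bigr)
	= -2r \sum_{\Loop\in\Loops_r} \frac{\sgn(\Loop)}{m(\Loop)} \prod_i x_{v_iv_{i+1}}
	= -2r\,f_r(x),
\]
by~\eqref{eqn:loopweight} and~\eqref{eqn:fr(x)}. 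I expect this counting — correctly balancing the factor $1/m(\Loop)$ against the $1/r$ from the logarithm below — to be the main obstacle and the crux of the whole theorem.

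Granting this, the determinant identity follows by summation. Writing $\lambda_1,\dots,\lambda_N$ with $N=2\card{E\setminus E_A}$ for the eigenvalues of~$\Lambda(x)$, the hypothesis $\rho(x)<1$ gives $\card{\lambda_i}<1$, so $\sum_r \tfrac1r\tr(\Lambda(x)^r) = \sum_i\sum_r \lambda_i^r/r = -\sum_i \ln(1-\lambda_i)$ converges absolutely; this simultaneously yields the claimed absolute summability of the~$f_r(x)$. Hence
\[
	\sum_{r=1}^\infty f_r(x)
	= -\tfrac12 \sum_{r=1}^\infty \tfrac1r \tr\bigl(\Lambda(x)^r\bigr)
	= \tfrac12 \sum_i \ln(1-\lambda_i)
	= \tfrac12 \ln\det\bigl(\Id-\Lambda(x)\bigr),
\]
and exponentiating gives the second equality, with the square root read as the principal branch $\exp\bigl(\tfrac12\tr\ln(\Id-\Lambda(x))\bigr)$.

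For the first equality I would bootstrap from Theorem~\ref{thm:Zexploopweights}. On the small polydisc $\{\abs{x_{uv}}<(d_{uv}-1)^{-1}\}$ that theorem gives $Z(x)=\exp\bigl(\sum_{\Loop\text{ in }G} w(\Loop;x)\bigr)$, and there the weights are absolutely summable, so regrouping by length turns this into $Z(x)=\exp(\sum_r f_r(x))$. Combined with the determinant identity this yields $Z(x)^2=\det(\Id-\Lambda(x))$ on the polydisc; since both sides are polynomials in the~$x_{uv}$, the identity then holds for all~$x$. Consequently $Z(x)=\pm\exp(\sum_r f_r(x))$ throughout $\{\rho(x)<1\}$, and as both functions are continuous, agree near the origin, and the region is connected (it contains~$0$ and is, e.g., star-shaped when $E_A=\emptyset$), the sign is constant and equal to~$+1$. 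This gives $Z(x)=\exp(\sum_r f_r(x))$ for every~$x$ with $\rho(x)<1$, completing the chain of equalities.
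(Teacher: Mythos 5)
Your treatment of the trace identity $\tr\bigl(\Lambda(x)^r\bigr) = -2r\,f_r(x)$ (counting $2r/m(\Loop)$ rooted non-backtracking walks per loop, phases adding to half the winding angle) and the passage from it to $\exp\bigl(\sum_r f_r(x)\bigr) = \sqrt{\det(\Id-\Lambda(x))}$ coincide with the paper's own argument; this is exactly equation~\eqref{eqn:feigenvals} there. Where you genuinely diverge is the first equality $Z(x) = \exp\bigl(\sum_r f_r(x)\bigr)$. The paper proves it pointwise for every $x$ with $\rho(x)<1$: the trace identity gives $\abs{f_r(x)} \leq \card{E\setminus E_A}\,\rho(x)^r/r$, i.e.\ condition~\eqref{eqn:fcondition}, and then the Fubini--Mertens computation from the proof of Theorem~\ref{thm:Zexploopweights}, applied to the all-configurations expansion of Theorem~\ref{thm:Zloopcfgs}, closes the argument with no further input. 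You instead use Theorem~\ref{thm:Zexploopweights} as a black box near the origin, upgrade $Z(x)^2 = \det(\Id-\Lambda(x))$ to a polynomial identity valid for all~$x$, and then remove the sign ambiguity by continuity on the region $\{\rho(x)<1\}$. This is a legitimate alternative (it avoids re-running any convergence argument), but it makes the conclusion depend on the topology of $\{\rho(x)<1\}$, which the paper's pointwise argument never needs.

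That dependence is where your proof has a gap: you justify connectedness of $\{\rho(x)<1\}$ only ``when $E_A=\emptyset$''. The theorem is stated for, and is applied by the paper to, graphs with nonempty sets of additional edges (the torus graph $G^\torus$ in the proof of Corollary~\ref{cor:Onsager}, and the graph $G_\gamma$ with additional edges $E_\gamma$ in the proof of Theorem~\ref{thm:highTcorr}), so this case cannot be waved away. When $E_A\neq\emptyset$, the entries~\eqref{eqn:Lambda(x)} are monomials of degree $\geq 1$ in the weights, so $\Lambda(tx)\neq t\Lambda(x)$; entrywise one has $\abs{\Lambda(tx)_{ij}}\leq t\abs{\Lambda(x)_{ij}}$, but since $\Lambda(x)$ is not a nonnegative matrix this controls only $\rho\bigl(\abs{\Lambda(tx)}\bigr)$, not $\rho\bigl(\Lambda(tx)\bigr)$ in terms of $\rho\bigl(\Lambda(x)\bigr)$, so the naive scaling path may a priori leave the region. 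The gap is fixable with one observation: every nonzero entry in row~$\dir{uv}$ of $\Lambda(x)$ contains the representative weight $x_{uv}$ exactly once, all other weight factors being additional. Hence scaling only the representative weights by $t\in[0,1]$ gives exactly $\Lambda\mapsto t\Lambda$, so this path stays inside $\{\rho<1\}$ and ends at a point where all representative weights vanish. There $Z=1$ (a nonempty even subgraph of the forest $(V,E_A)$ would contain a cycle) and $f_r=0$ for all $r\geq1$ (every loop of positive length carries a representative weight factor equal to~$0$), so the sign is $+1$ at the endpoint and your continuity argument then closes for general~$E_A$. As written, though, your proof establishes the theorem only in the case $E_A=\emptyset$.
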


Clearly, to apply Theorem~\ref{thm:Zdeterminant} to the Ising model on~$\Z^2$, 
we will need a bound on the spectral radius~$\rho(x)$. Since $\rho(x)$ is 
bounded from above by the operator norm~$\norm{ \Lambda(x) }$ of $\Lambda(x)$ 
induced by the Euclidean metric, the desired bound is provided by the next 
theorem:

\begin{theorem}
	\label{thm:keybound}
	For a finite rectangle~$G$ in~$\Z^2$ with no additional edges,
	\begin{gather*}
		\norm{\Lambda(x)} \leq (\sqrt2+1) \supnorm{x},\\
		\llap{and\quad}
		\abs{f_r(x)} \leq 2\card{V}r^{-1} (\sqrt2+1)^r \supnorm{x}^r.
	\end{gather*}
\end{theorem}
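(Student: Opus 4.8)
\section*{Proof proposal}

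The plan is to establish the two estimates in turn, with the operator-norm bound being the heart of the matter and the bound on $\abs{f_r(x)}$ following from it by a trace identity. For the first inequality I would begin by factoring the transition matrix as $\Lambda(x) = D_x M$, where $D_x$ is the diagonal matrix carrying the edge weights, $(D_x)_{\dir{uv},\dir{uv}} = x_{uv}$, and $M$ is the \emph{phase matrix} obtained by setting all edge weights equal to~$1$ in~\eqref{eqn:Lambda(x)}; since $G$ has no additional edges, $M_{\dir{uv},\dir{wz}} = e^{i\angle(\dir{uv},\dir{wz})/2}$ when $v=w$ and $u\neq z$, and $0$ otherwise. As $D_x$ is diagonal with entries of modulus at most $\supnorm{x}$, submultiplicativity of the operator norm gives $\norm{\Lambda(x)} \leq \supnorm{x}\,\norm{M}$, so it suffices to prove $\norm{M} \leq \sqrt2+1$.

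The key observation is that $M$ is a compression of the analogous phase matrix $M_\infty$ on all of~$\Z^2$. The directed edges of~$G$ form a subset of the directed edges of~$\Z^2$, the non-backtracking condition $z\neq u$ and the turning angles are computed identically in~$G$ and in~$\Z^2$, and so, if $P$ is the orthogonal projection from square-summable functions on the directed edges of~$\Z^2$ onto those supported on~$G$, then $M$ coincides with the compression $PM_\infty P$ restricted to the range of~$P$. Hence $\norm{M}\leq\norm{M_\infty}$, and it remains to bound the translation-invariant operator~$M_\infty$. Indexing directed edges by (tail, direction) with direction in $\{\east,\north,\west,\south\}$ identifies the domain with $L^2(\mathbb{T}^2)\otimes\C^4$, and under the Fourier transform $M_\infty$ becomes multiplication by a $4\times4$ symbol $\hat M(\theta) = D(\theta)K$, where $D(\theta) = \operatorname{diag}(e^{i\theta_1}, e^{i\theta_2}, e^{-i\theta_1}, e^{-i\theta_2})$ is unitary and $K$ is the circulant matrix with first row $(1,\, e^{i\pi/4},\, 0,\, e^{-i\pi/4})$ recording the phases $1$ for going straight, $e^{\pm i\pi/4}$ for the two turns, and $0$ for backtracking. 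Since $D(\theta)$ is unitary, $\norm{\hat M(\theta)} = \norm{K}$ for every~$\theta$, so $\norm{M_\infty} = \norm{K}$. The circulant~$K$ is normal with eigenvalues $\lambda_j = 1 + 2\cos(\pi/4 + j\pi/2)$ for $j=0,1,2,3$, namely $1+\sqrt2,\,1-\sqrt2,\,1-\sqrt2,\,1+\sqrt2$; the largest modulus is $\sqrt2+1$, whence $\norm{M}\leq\norm{M_\infty}=\sqrt2+1$ and the first inequality follows.

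For the bound on $\abs{f_r(x)}$ I would use the trace identity $f_r(x) = -\tfrac1{2r}\tr\bigl(\Lambda(x)^r\bigr)$, which is the per-$r$ term behind Theorem~\ref{thm:Zdeterminant} (formally $\sum_r f_r = \tfrac12\ln\det(\Id-\Lambda) = -\tfrac12\sum_r r^{-1}\tr\Lambda^r$) and is supplied by the loop–trace correspondence established in the combinatorial development. Writing the trace as the sum of the $r$th powers of the eigenvalues and bounding each by the spectral radius, which is at most $\norm{\Lambda(x)}$, gives
\[
	\abs{f_r(x)} = \frac1{2r}\,\bigl\lvert \tr \Lambda(x)^r \bigr\rvert \leq \frac1{2r}\,(2\card{E})\,\norm{\Lambda(x)}^r \leq \frac{\card{E}}{r}\,(\sqrt2+1)^r\,\supnorm{x}^r,
\]
since $\Lambda(x)$ has $2\card{E}$ eigenvalues. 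Finally, a rectangle in~$\Z^2$ satisfies $\card{E}\leq 2\card{V}$, which yields the stated bound $\abs{f_r(x)} \leq 2\card{V}\,r^{-1}(\sqrt2+1)^r\supnorm{x}^r$.

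The main obstacle is the sharp constant in the operator-norm bound: estimating $\norm{M}$ by its maximal row and column sums only gives $\norm{M}\leq 3$, because each directed edge has up to three non-backtracking continuations. Extracting the better constant $\sqrt2+1$ genuinely requires the cancellation between left and right turns, $e^{i\pi/4}+e^{-i\pi/4}=\sqrt2$, which the Fourier symbol makes transparent through the circulant eigenvalues. The other point demanding care is the compression step, namely verifying that restricting to the finite rectangle really realizes $\Lambda$ (up to the diagonal weight factor) as a corner of the infinite-lattice operator, so that the norm can only decrease.
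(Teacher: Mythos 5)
Your proposal is correct: both inequalities come out with the right constants, and every step can be justified. It does, however, reach the operator-norm bound by a different mechanism than the paper, so a comparison is worthwhile. Common to both proofs is the skeleton --- strip off the edge weights by a diagonal factor and submultiplicativity, embed the rectangle's unit-weight phase matrix into a vertex-transitive object, and use unitary invariance of the operator norm to reduce to a single $4\times4$ matrix of turning phases of norm $\sqrt2+1$ --- and your bound on $\abs{f_r(x)}$, via $f_r(x)=-\frac1{2r}\tr\Lambda^r(x)$, the spectral radius, and the count $2\card{E}\leq4\card{V}$ of directed edges, is precisely the paper's own argument from~\eqref{eqn:feigenvals}. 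The key step differs: the paper embeds $G$ into the \emph{finite} torus graph $G^\torus$ of Section~\ref{ssec:eigenvalues}, with the additional edges weighted so that each chain joining opposite sides carries total weight $-1$ (compensating the winding a path picks up along it), writes $\Lambda(x)=D(x)\,\Lambda^\torus D(1)$, and block-diagonalizes $\Lambda^\torus$ by a column permutation matching, at each vertex, the rows indexed by incoming directed edges with the columns indexed by outgoing ones; each diagonal block is the Hermitian matrix $A$, which is exactly your circulant $K$ under the natural identification of indices. You instead compress into the \emph{infinite} lattice, $\norm{M}\leq\norm{M_\infty}$, and diagonalize $M_\infty$ by Fourier transform, factoring the symbol as a unitary times $K$. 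What the paper's route buys is that it stays entirely within finite-dimensional linear algebra (no symbol calculus for translation-invariant operators on $\ell^2(\Z^2)\otimes\C^4$ is needed), and the torus construction is reused in the proof of Corollary~\ref{cor:Onsager}, where the discrete Fourier transform of $\Lambda^\torus$ --- the finite counterpart of your computation --- is carried out; the remark following the paper's proof explicitly anticipates your approach. What your route buys is freedom from the torus bookkeeping (no additional edges, no weight $-1$ chains) and a transparent source for the constant: the circulant eigenvalues $1+2\cos(\pi/4+j\pi/2)$, i.e.\ the cancellation $e^{i\pi/4}+e^{-i\pi/4}=\sqrt2$ between the two allowed turns. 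The two points you flagged as delicate do hold: for a rectangle with no additional edges, the entries of the phase matrix of $G$ agree with those of $M_\infty$ on pairs of directed edges of $G$ (rows indexed by edges pointing to $\boundary{G}$ merely lose nonzero entries), so $M$ really is a compression of $M_\infty$; and $K$ is genuinely circulant in the cyclic order $\east,\north,\west,\south$, hence normal, so its norm equals its spectral radius $\sqrt2+1$.
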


If we take all edge weights to be~$1$, Theorem~\ref{thm:keybound} says that 
the ``number'' of signed loops of $n$~steps, counted with signs and 
multiplicities included, grows (in absolute value) like $(\sqrt2 + 1)^n$. 
Contrast this with the number of unsigned non-backtracking loops in~$\Z^2$, 
which grows like~$3^n$. It is this reduction in growth rate which allows us to 
go all the way to the critical point, while the classical Peierls and Fisher 
arguments stay far from it. Indeed, in Section~\ref{ssec:isingmodel} we have 
seen that we will take our edge weights to be either $\exp(-2\beta)$ 
or~$\tanh\beta$, so by Theorem~\ref{thm:keybound} and~\eqref{eqn:beta_c}, the 
spectral radius will be smaller than~$1$ for all $\beta\in (\beta_c,\infty)$ 
or all $\beta\in (0,\beta_c)$, respectively.

We conclude this introduction with a few remarks about the history and status 
of the combinatorial identities presented above. Kac and Ward observed in 
their paper~\cite{KacWard} that the Onsager--Kaufman formula for the partition 
function~$Z_{G, \beta}$ of the Ising model on~$\Z^2$ (or rather its square) 
appears to be proportional to the determinant of a matrix~$A_G$, which for 
rectangles in~$\Z^2$ is equivalent to our matrix $\Id-\Lambda(x)$. Various 
attempts were subsequently undertaken to justify the formula $\smash{ 
Z^2_{G,\beta} } \propto \det A_G$, and to rederive the Onsager--Kaufman 
formula in this way. These attempts involved expanding the partition function 
into a formal infinite product over signed loops~\cites{Sherman1, Sherman3, 
Burgoyne}, or a formal infinite sum over signed loop 
configurations~\cite{Vdovichenko1}. In either case, the correct interpretation 
and convergence of the obtained formal expressions are serious mathematical 
issues.

These issues were circumvented by Dolbilin et al.~\cite{DolEtAl} by directly 
comparing the coefficients of the finite polynomials $\smash{ Z^2_{G, \beta} 
}$ and~$\det A_G$, thus rigorously proving the formula $\smash{ Z^2_{G,\beta} 
} \propto \det A_G$ for finite planar graphs~$G$. The same method was then 
employed by Cimasoni to generalize this Kac--Ward formula to graphs embedded 
in surfaces of higher genus~\cite{Cimasoni}. He also exposed a direct relation 
between the Kac--Ward determinant and the adjacency matrix arising in the 
dimer approach.

Historically then, the main focus appears to have been on the equality between 
the extreme left-hand and right-hand sides of 
equation~\eqref{eqn:Zdeterminant}, in cases where $Z(x)$ is proportional to 
the Ising partition function and $A_G = \Id-\Lambda(x)$. For the applications 
to the Ising model presented in this paper, however, the first equality 
in~\eqref{eqn:Zdeterminant} is the more important and relevant one. Therefore, 
our proof proceeds along the lines of the Vdovichenko paper, which is targeted 
at directly expressing~$Z(x)$ as an infinite sum over configurations of signed 
loops. As we go along, we carefully address the issues of interpretation and 
convergence of this sum, mentioned above. In particular, 
Theorem~\ref{thm:keybound}, the key to the convergence issue, is to the best 
of our knowledge a new result.

Moreover, for our applications of the combinatorial results to the Ising 
model, it turns out to be necessary to allow crossing edges. This means that 
our function~$Z(x)$ (and hence also Theorem~\ref{thm:Zdeterminant}) is not 
quite the same as the one considered in the literature so far. In particular, 
our~$Z(x)$ need not be proportional to the Ising partition function for the 
graph~$G$.

\section{Proofs of the combinatorial identities}
\label{sec:combinatorial}

We now turn to the proof of our main Theorems \ref{thm:Zexploopweights} 
and~\ref{thm:Zdeterminant}. Recall that here $G = (V,E)$ is a general finite 
graph embedded in the plane, potentially containing crossing edges or 
additional edges. The proof proceeds in a number of steps. In the first step, 
detailed in Section~\ref{ssec:edgedisjoint}, we will identify each even 
subgraph $(V,F)$ of~$G$ with a number of edge-disjoint collections of loops, 
and show that the sum of their weights yields precisely the contribution 
of~$F$ to the generating function~$Z(x)$. In the second step, in 
Section~\ref{ssec:allloops}, we will explain the conditions under which we can 
express~$Z(x)$ in terms of $\sum_r f_r(x)$ and $\det\bigl( \Id - \Lambda(x) 
\bigr)$, under the assumption that the weights of all remaining configurations 
of loops in~$G$ of total length~$r$ cancel each other. The proof of this 
assumption, the last step, is carried out in 
Section~\ref{ssec:loopcancellation}.

\subsection{Expansion into collections of edge-disjoint loops}
\label{ssec:edgedisjoint}

We will be concerned with crossings of loops and paths, and we need to 
carefully establish the relevant definitions first. More specifically, we will 
consider collections $\{\Loop_1,\dots,\Loop_s\}$ of loops with the properties 
that all loops $\Loop_1,\dots,\Loop_s$ are edge-disjoint, and no two loops in 
the collection visit a common edge. We call these \emph{edge-disjoint 
collections} of loops. Intuitively it may be clear what we mean by a crossing 
of such edge-disjoint loops, but some care is needed, so we will now give the 
precise definitions.

First, consider two paths $(u,v,w)$ and $(x,y,z)$ in~$G$, and let~$A$ be the 
union of the two half-lines $\{v+t(u-v)\colon t\geq0\}$ and $\{v+t(w-v)\colon 
t\geq0\}$. We say that $(u,v,w)$ and $(x,y,z)$ \emph{cross each other at the 
vertex}~$v$ if $v=y$ and the vertices $x$ and~$z$ do not lie in the same 
infinite component of the complement of~$A$ in the plane.

Now let $\Loop_1 = (u_0,\dots, u_{n-1})$ and $\Loop_2 = (v_0, \dots, v_{m-1})$ 
be two loops that form an edge-disjoint pair $\{\Loop_1,\Loop_2\}$. By 
$C_V(\Loop_1, \Loop_2)$ we denote the number of pairs $(i,j)$, where $0\leq 
i<n$ and $0\leq j<m$, such that the paths $(u_{i-1}, u_i, u_{i+1})$ and 
$(v_{j-1}, v_j, v_{j+1})$ cross each other at~$u_i$. We call $C_V(\Loop_1, 
\Loop_2)$ the number of \emph{vertex crossings} between $\Loop_1$ 
and~$\Loop_2$. Similarly, we define the number of \emph{edge crossings} 
between $\Loop_1$ and~$\Loop_2$, denoted $C_E(\Loop_1, \Loop_2)$, as the 
number of pairs $(i,j)$ such that $0\leq i<n$ and $0\leq j<m$, and the edges 
$u_iu_{i+1}$ and $v_jv_{j+1}$ cross each other in~$G$.

We also need to formally define the number of times a loop crosses itself, so 
consider an edge-disjoint loop $\Loop = (v_0, \dots, v_{n-1})$. We define the 
number of \emph{vertex self-crossings} of~$\Loop$, denoted $C_V(\Loop)$, as 
the number of pairs $(i,j)$, where $0\leq i<j<n$, such that $(v_{i-1}, v_i, 
v_{i+1})$ and $(v_{j-1}, v_j, v_{j+1})$ cross each other at the vertex~$v_i$. 
The number of \emph{edge self-crossings} of~$\Loop$, denoted $C_E(\Loop)$, is 
defined as the number of pairs $(i,j)$ such that $0\leq i<j<n$, and the edges 
$v_iv_{i+1}$ and $v_jv_{j+1}$ cross each other in the graph~$G$.

As was already mentioned in the introduction, Whitney's formula~\cite{Whitney} 
says that the sign of an edge-disjoint loop is $-1$ if the loop crosses itself 
an odd number of times, and $+1$ otherwise. In other words, we have
\[
	\sgn(\Loop) = (-1)^{C_V(\Loop) + C_E(\Loop)}
\]
if $\Loop$ is edge-disjoint. We now simply define the sign of an edge-disjoint 
collection of loops $\{\Loop_1,\dots,\Loop_s\}$ as
\begin{equation}
	\label{eqn:sgndisjoint}
	\sgn\{\Loop_1,\dots,\Loop_s\}
	= \prod_{i=1}^s \sgn(\Loop_i)
	= (-1)^{ \sum_{i=1}^s \{C_V(\Loop_i) + C_E(\Loop_i)\} }.
\end{equation}
If $F\subset E$ is even, we can decompose~$F$ into an edge-disjoint collection 
of loops in such a way, that the union of all edges traversed by the loops 
is~$F$ (one way to find such a decomposition is given in the proof of 
Proposition~\ref{pro:edgedisjoint} below). This decomposition is in general 
not unique. We write $\Decomps(F)$ for the set of all possible edge-disjoint 
decompositions of~$F$, and recall that $C_F$ denotes the number of unordered 
pairs of edges in~$F$ that cross each other.

\begin{proposition}
	\label{pro:edgedisjoint}
	For all even subsets~$F$ of~$E$ we have that
	\[
		\sum_{\{\Loop_1,\dots,\Loop_s\} \in \Decomps(F)}
		\sgn\{\Loop_1,\dots,\Loop_s\} = (-1)^{C_F}.
	\]
\end{proposition}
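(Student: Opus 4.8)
The plan is to rewrite the sign of each decomposition in terms of a single crossing statistic that factorises over the vertices of $G$, and then evaluate the resulting per-vertex signed sums. Fix an even $F\subset E$ and a decomposition $D=\{\Loop_1,\dots,\Loop_s\}\in\Decomps(F)$. Writing $C_E(\Loop_i,\Loop_j)$ and $C_V(\Loop_i,\Loop_j)$ for the numbers of edge- and vertex-crossings between two distinct loops of $D$, I would first observe that every crossing pair of edges of $F$ lies either inside a single loop or between two loops, so that
\[
	C_F = \sum_i C_E(\Loop_i) + \sum_{i<j} C_E(\Loop_i,\Loop_j),
\]
an identity whose left-hand side does not depend on $D$. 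Combining this with \eqref{eqn:sgndisjoint} and reducing the exponent modulo $2$ (the $2\sum_i C_E(\Loop_i)$ term drops out), I get
\[
	(-1)^{C_F}\sgn\{\Loop_1,\dots,\Loop_s\}
	= (-1)^{\sum_i C_V(\Loop_i) + \sum_{i<j} C_E(\Loop_i,\Loop_j)}.
\]
Since $((-1)^{C_F})^2=1$, the assertion of the proposition is therefore equivalent to showing that the sum over $D\in\Decomps(F)$ of the right-hand side equals $1$.

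The key observation is that $\Loop_i$ and $\Loop_j$ are two \emph{closed} curves in the plane, so after a small perturbation to general position their total number of transversal intersections $C_V(\Loop_i,\Loop_j)+C_E(\Loop_i,\Loop_j)$ is even (the mod-$2$ intersection number of two null-homologous $1$-cycles in $\R^2$ vanishes). Hence $C_E(\Loop_i,\Loop_j)\equiv C_V(\Loop_i,\Loop_j)\pmod 2$ for each pair, and the exponent above becomes, modulo $2$, the \emph{total} number of vertex crossings
\[
	N_V(D) = \sum_i C_V(\Loop_i) + \sum_{i<j} C_V(\Loop_i,\Loop_j),
\]
which counts, at each vertex $v$, all crossing pairs of passages through $v$, irrespective of which loops they belong to.

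Next I would use that a decomposition $D\in\Decomps(F)$ is precisely the datum of a perfect matching $M_v$, at each vertex $v$, of the edge-ends of $F$ incident to $v$: following the matched edge-ends produces an edge-disjoint collection of non-backtracking closed walks with edge-union $F$, and conversely, and the two directions are inverse to each other. Unwinding the crossing definition of Section~\ref{ssec:edgedisjoint} shows that two passages at $v$ cross exactly when their chords interleave in the cyclic order of the edge-ends given by the embedding, so $N_V(D)=\sum_v \mathrm{cr}(M_v)$, where $\mathrm{cr}$ counts crossing chord-pairs. As the matchings at distinct vertices are chosen independently,
\[
	\sum_{D\in\Decomps(F)} (-1)^{N_V(D)}
	= \prod_{v\in V}\Bigl(\sum_{M_v} (-1)^{\mathrm{cr}(M_v)}\Bigr).
\]
It then remains to show that for $2k$ points in convex position the signed matching sum is $1$. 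I would prove this by identifying $(-1)^{\mathrm{cr}(M)}$ with the Pfaffian sign of $M$ (a standard fact), so that each factor is $\mathrm{Pf}(J)$ with $J$ the antisymmetric $2k\times 2k$ matrix $J_{ab}=\sgn(b-a)$; a first-row cofactor expansion gives $\mathrm{Pf}(J^{(2k)})=\bigl(\sum_{j=2}^{2k}(-1)^j\bigr)\,\mathrm{Pf}(J^{(2k-2)})=\mathrm{Pf}(J^{(2k-2)})$, whence $\mathrm{Pf}(J)=1$ by induction.

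The step I expect to be the main obstacle, and would treat most carefully, is the passage to $N_V(D)$: it rests both on the evenness of the number of intersections of two closed planar curves and on matching the combinatorial crossing definitions of Section~\ref{ssec:edgedisjoint} to genuine transversal intersections after perturbation to general position. The per-vertex identity, although it collapses to a clean Pfaffian computation, also requires verifying that every assignment of vertex-matchings does yield an element of $\Decomps(F)$ — that is, that the resulting walks are non-backtracking and edge-disjoint — which follows from the absence of multiple edges together with the evenness of $F$.
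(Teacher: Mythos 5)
Your proposal is correct and takes essentially the same route as the paper's proof: the same reduction of $(-1)^{C_F}\sgn\{\Loop_1,\dots,\Loop_s\}$ to a total vertex-crossing count (using the evenness of the number of intersections of two closed plane curves), the same bijection between decompositions in~$\Decomps(F)$ and independent pairings of edge-ends at each vertex, and the same factorization of the signed sum over vertices. Your concluding Pfaffian evaluation is the paper's induction in disguise: the cofactor expansion along the first row, with its $k$ even and $k-1$ odd positions summing to $1$, is exactly the paper's recursion giving $N^+_k - N^-_k = N^+_{k-1} - N^-_{k-1} = 1$.
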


\begin{proof}
	Let $\{\Loop_1, \dots, \Loop_s\}$ be an edge-disjoint collection of loops. 
	Since $\Loop_1, \dots, \Loop_s$ are all closed loops in the plane, any two 
	distinct loops $\Loop_i$ and~$\Loop_j$ from the collection necessarily 
	cross each other an even number of times. That is, $C_V(\Loop_i, \Loop_j) 
	+ C_E(\Loop_i, \Loop_j)$ is even for all $i\neq j$. Therefore,
	\[
		\sgn\{\Loop_1,\dots,\Loop_s\}
		= (-1)^{ \sum_{1\leq i\leq s}\{ C_V(\Loop_i) + C_E(\Loop_i)\} + 
		\sum_{1\leq i<j\leq s} \{ C_V(\Loop_i,\Loop_j) + 
		C_E(\Loop_i,\Loop_j)\} }.
	\]
	Furthermore, if $\{\Loop_1,\dots,\Loop_s\} \in \Decomps(F)$, then clearly 
	the total number of edge crossings occurring among the loops must coincide 
	with~$C_F$, that is,
	\[
		C_F = \sum_{1\leq i\leq s} C_E(\Loop_i) + \sum_{1\leq i<j\leq s} 
		C_E(\Loop_i,\Loop_j).
	\]
	Hence, it suffices to prove that
	\begin{equation}
		\label{eqn:sumvcrossings=1}
		\sum_{\{\Loop_1,\dots,\Loop_s\} \in \Decomps(F)} (-1)^{\sum_{1\leq 
		i\leq s} C_V(\Loop_i) + \sum_{1\leq i<j\leq s} C_V(\Loop_i,\Loop_j)}
		= 1.
	\end{equation}

	\begin{figure}
		\begin{center}
			\includegraphics{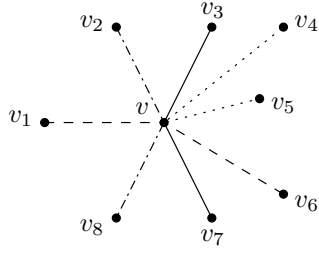}
		\end{center}
		\caption{The neighbours $v_1,v_2,\dotsc$ of a vertex~$v$ in an even 
		subgraph~$(V,F)$ are ordered in a clockwise fashion around~$v$. Two 
		neighbours that are connected by edges drawn in the same line style 
		are paired to each other (see the text).}
		\label{fig:Pairing}
	\end{figure}

	Let $V_F$ denote the set of vertices in~$V$ whose degree in the subgraph 
	$(V,F)$ is nonzero, and let $\deg(v,F)$ denote the degree of~$v$ in 
	$(V,F)$. For $v\in V_F$ of degree~$2k$, write $v_1, \dots, v_{2k}$ for the 
	endpoints of the edges in~$F$ that are incident to~$v$. Assume that these 
	vertices are ordered in a clockwise manner around~$v$, starting from the 
	lexicographically smallest one (see Figure~\ref{fig:Pairing}). Denote by 
	$\Pairings_v(F)$ the collection of partitions of the vertices $v_1, \dots, 
	v_{2k}$ into sets of size~$2$. We call these partitions \emph{pairings at 
	the vertex}~$v$. We write
	\[
		\Pairings(F) = \prod_{v\in V_F} \Pairings_v(F),
	\]
	and call an element of~$\Pairings(F)$ a \emph{pairing} associated with the 
	subgraph~$(V,F)$.

	We have a natural 1--1 correspondence between $\Pairings(F)$ 
	and~$\Decomps(F)$. Indeed, starting from any vertex $v\in V_F$ and any 
	$i\in \{1, \dots, \deg(v,F)\}$, the pairing $\pi\in \Pairings(F)$ defines 
	a unique closed path $(u_0, \dots, u_{n-1})$ with the properties that $u_0 
	= v_i$, $u_1 = v$, and for all~$j$, $u_{j-1}$ is paired with~$u_{j+1}$ at 
	the vertex~$u_j$. Continuing this way, and replacing each closed path 
	obtained by the corresponding loop, yields an edge-disjoint collection 
	$\{\Loop_1, \dots, \Loop_s\}\in \Decomps(F)$. It is easy to see that this 
	defines a bijective relation between $\Pairings(F)$ and~$\Decomps(F)$.

	Using this bijection, we can express the sum 
	in~\eqref{eqn:sumvcrossings=1} equally well as a sum over all pairings. 
	More precisely, for $\pi\in \Pairings(F)$ let $\pi_v$ denote the pairing 
	it induces at the vertex $v\in V_F$, and write $C_v(\pi_v)$ for the number 
	of crossings at the vertex~$v$ introduced by this pairing. We call~$\pi_v$ 
	\emph{even} (\emph{odd}) if $C_v(\pi_v)$ is even (odd). Note that 
	\eqref{eqn:sumvcrossings=1} is equivalent to
	\[
		\sum_{\pi\in \Pairings(F)} (-1)^{\sum_{v\in V_F} C_v(\pi_v)}
		= \prod_{v\in V_F} \sum_{\pi_v\in \Pairings_v(F)} (-1)^{C_v(\pi_v)}
		= 1,
	\]
	from which we see that it suffices to prove that for all $v\in V_F$, the 
	number of even pairings~$\pi_v$ exceeds the number of odd pairings~$\pi_v$ 
	by~1.

	We prove this by induction on the degree~$2k$ of~$v$. Write $N^+_k$ 
	and~$N^-_k$ for the numbers of even, resp.\ odd, pairings for~$v$ of 
	degree~$2k$. For $k=1$ we clearly have $N^+_k = 1$ and $N^-_k = 0$. Now 
	let $k>1$, and suppose that we pair the vertex~$v_1$ with~$v_i$ at~$v$. 
	Next pair the remaining $2k-2$ neighbours~$v_j$ of~$v$ in all possible 
	ways. For even~$i$, there is an even number of~$j$ in between $1$ and~$i$, 
	and therefore the pairing we obtain will be even if and only if the 
	pairing of the remaining $2k-2$ vertices is even (see 
	Figure~\ref{fig:Pairing}). Likewise, for odd~$i$, the obtained pairing 
	will be even if and only if the pairing of the remaining vertices is odd. 
	Since we have $k$ even values for~$i$, and $k-1$ odd values, this gives 
	$N^+_k = kN^+_{k-1} + (k-1)N^-_{k-1}$ and $N^-_k = kN^-_{k-1} + 
	(k-1)N^+_{k-1}$. Hence by the induction hypothesis, $N^+_k-N^-_k = 1$.
\end{proof}

From Proposition~\ref{pro:edgedisjoint} we will now obtain our first main 
result. Recall that
\[
	Z(x) = \sum_{\text{even }F\subset E} (-1)^{C_F} \prod_{uv\in F} x_{uv}.
\]
The case $F = \emptyset$ is treated separately: by convention it 
contributes~$1$ to the sum. Hence, Proposition~\ref{pro:edgedisjoint} implies 
that
\[
	Z(x)
	= 1 +\sum_{\substack{\text{even }F\subset E:\\ F\neq \emptyset}} \sum_{\ 
	\{\Loop_1, \dots, \Loop_s\}\in \Decomps(F)\ } \sgn\{\Loop_1, 
	\dots,\Loop_s\} \prod_{uv\in F} x_{uv}.
\]
Recall that the multiplicity~$m(\Loop)$ of an edge-disjoint loop~$\Loop$ 
is~$1$. Therefore, using \eqref{eqn:sgndisjoint} and the 
definition~\eqref{eqn:loopweight} of the weight of a loop, we can write
\[
	Z(x)
	= 1 + \sum_{\substack{\text{even }F\subset E:\\ F\neq \emptyset}} \sum_{\ 
	\{\Loop_1, \dots, \Loop_s\}\in \Decomps(F)\ } \prod_{i=1}^s w(\Loop_i; x).
\]
Since this is a finite sum, we do not need to worry about the order of 
summation, so we have
\[
	Z(x)
	= 1 + \sum_{r=1}^{\infty} \sum_{\substack{\text{even }F\subset E:\\ 
	\card{F\setminus E_A} = r}} \sum_{\ \{\Loop_1, \dots, \Loop_s\}\in 
	\Decomps(F)\ } \prod_{i=1}^s w(\Loop_i; x).
\]
If we now denote by $\Decomps_r$ the set consisting of all those edge-disjoint 
collections of loops $\{\Loop_1, \dots, \Loop_s\}$ for which the total length 
$\sum_{i=1}^s r(\Loop_i)$ is~$r$, we see that we have established the 
following theorem:

\begin{theorem}
	\label{thm:Zedgedisjoint}
	\[
		Z(x)
		= 1 + \sum_{r=1}^{\infty} \sum_{\{\Loop_1, \dots, \Loop_s\} \in 
		\Decomps_r} \prod_{i=1}^s w(\Loop_i; x).
	\]
\end{theorem}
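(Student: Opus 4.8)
The plan is to obtain the stated formula as a direct reorganization of the defining sum for~$Z(x)$, with Proposition~\ref{pro:edgedisjoint} carrying the essential combinatorial content. First I would start from the definition
\[
	Z(x) = \sum_{\text{even }F\subset E} (-1)^{C_F} \prod_{uv\in F} x_{uv},
\]
and isolate the empty subgraph, which by convention contributes the single term~$1$. For every nonempty even~$F$, I would then replace the factor $(-1)^{C_F}$ by invoking Proposition~\ref{pro:edgedisjoint}, which identifies $(-1)^{C_F}$ with the sum over all edge-disjoint decompositions $\{\Loop_1,\dots,\Loop_s\}\in\Decomps(F)$ of the sign $\sgn\{\Loop_1,\dots,\Loop_s\}$. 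This converts the contribution of each~$F$ into an inner sum over its decompositions.

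The next step recognizes that the sign of a decomposition, multiplied by the edge-weight product over~$F$, factorizes into a product of individual loop weights. Since the loops in an edge-disjoint collection are pairwise and internally edge-disjoint, each has multiplicity~$1$, and together they traverse each edge of~$F$ exactly once. Hence $\prod_{uv\in F} x_{uv}$ splits as a product, over the loops, of the edge-weight products along each loop, and combining this with~\eqref{eqn:sgndisjoint} and the weight definition~\eqref{eqn:loopweight} gives
\[
	\sgn\{\Loop_1,\dots,\Loop_s\} \prod_{uv\in F} x_{uv}
	= \prod_{i=1}^s w(\Loop_i; x).
\]
Substituting this back expresses $Z(x)$ as~$1$ plus a double sum over nonempty even~$F$ and over $\Decomps(F)$ of $\prod_{i=1}^s w(\Loop_i; x)$.

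Finally I would regroup the sum. Summing over all nonempty even~$F$ and, for each, over its decompositions is the same as summing once over all edge-disjoint collections of loops, since each such collection has even degree at every vertex and therefore determines a unique even edge-union~$F$, while conversely the decompositions of each even~$F$ are exactly recovered by the correspondence used in Proposition~\ref{pro:edgedisjoint}. Grouping these collections by their total length $\sum_{i=1}^s r(\Loop_i)$, which equals $\card{F\setminus E_A}$ because length counts only representative edges, and writing $\Decomps_r$ for the collections of total length~$r$, yields the claimed expression. As $E$ is finite, every sum involved is finite, so no issue of convergence or reordering arises.

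As for the main obstacle: there is no deep difficulty once Proposition~\ref{pro:edgedisjoint} is in hand, since the theorem is a bookkeeping rearrangement of a finite sum. The one place to be careful is the factorization step, where one must use that edge-disjointness forces multiplicity~$1$ and that the decomposition traverses each edge of~$F$ precisely once, so that $\prod_{uv\in F} x_{uv}$ matches the product of the per-loop edge-weight products; and, in the regrouping, that the index~$r$ counts only the edges lying in $E\setminus E_A$.
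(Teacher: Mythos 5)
Your proposal is correct and follows essentially the same route as the paper: isolate the empty subgraph, apply Proposition~\ref{pro:edgedisjoint} to replace $(-1)^{C_F}$ by the sum of decomposition signs, use edge-disjointness (hence multiplicity~$1$) together with \eqref{eqn:sgndisjoint} and \eqref{eqn:loopweight} to factorize into loop weights, and regroup the finite sum by total length $r = \card{F\setminus E_A}$. The points you flag as requiring care (multiplicity~$1$ and counting only representative edges) are exactly the ones the paper's argument relies on.
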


\subsection{Extension to all loop configurations}
\label{ssec:allloops}

In Theorem~\ref{thm:Zedgedisjoint}, we have expressed the generating 
function~$Z(x)$ as a sum over all edge-disjoint collections of loops in~$G$. 
In this section, we will see that if the edge weights are sufficiently small, 
we can drop the condition that the loops have to be edge-disjoint, and sum 
instead over all possible loop configurations in~$G$. Here, a \emph{loop 
configuration} is simply an ordered sequence $(\Loop_1, \dots, \Loop_s)$ of 
loops; there is no condition that loops have to be edge-disjoint, nor that two 
loops in the configuration have to be distinct (i.e.\ it is allowed that 
$\Loop_i = \Loop_j$ for some $i\neq j$, which is why we work with ordered 
sequences of loops now).

Write $\Configs_r$ for the collection of all loop configurations $(\Loop_1, 
\dots, \Loop_s)$ satisfying $r(\Loop_1) + \dots + r(\Loop_s) = r$. Some of 
these loop configurations will consist of distinct loops that together form an 
edge-disjoint collection of loops. Let $\Configs^*_r$ denote the subset 
of~$\Configs_r$ containing only these edge-disjoint loop configurations. 
Observe that if $\{\Loop_1,\dots,\Loop_s\}$ is an edge-disjoint collection of 
loops, then the corresponding loop configuration $(\Loop_1, \dots, \Loop_s)$ 
has $s!$~permutations. Therefore, by Theorem~\ref{thm:Zedgedisjoint} we 
already have that
\[
	Z(x)
	= 1 + \sum_{r=1}^\infty \sum_{s=1}^\infty \sum_{ (\Loop_1, \dots, \Loop_s) 
	\in \Configs^*_r} \frac1{s!} \prod_{i=1}^s w(\Loop_i; x),
\]
but we claim that here we may sum over~$\Configs_r$ instead of~$\Configs^*_r$:

\begin{theorem}
	\label{thm:Zloopcfgs}
	\[
		Z(x)
		= 1 + \sum_{r=1}^\infty \sum_{s=1}^\infty \sum_{(\Loop_1,\dots,\Loop_s) 
		\in \Configs_r} \frac1{s!} \prod_{i=1}^s w(\Loop_i; x).
	\]
\end{theorem}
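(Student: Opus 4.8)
The plan is to deduce Theorem~\ref{thm:Zloopcfgs} from Theorem~\ref{thm:Zedgedisjoint} by showing that the extra configurations—those in $\Configs_r\setminus\Configs^*_r$—contribute nothing. Writing each edge-disjoint collection of $s$ distinct loops as its $s!$ orderings, Theorem~\ref{thm:Zedgedisjoint} already gives
\[
	Z(x) = 1 + \sum_{r=1}^\infty\sum_{s=1}^\infty \sum_{(\Loop_1,\dots,\Loop_s)\in\Configs^*_r} \frac1{s!}\prod_{i=1}^s w(\Loop_i;x),
\]
so the theorem is equivalent to the single identity $\sum_s\sum_{(\Loop_1,\dots,\Loop_s)\in\Configs_r\setminus\Configs^*_r}\frac1{s!}\prod_i w(\Loop_i;x)=0$ for each fixed~$r$. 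Before anything else I would record that these are \emph{finite} sums: since the additional edges form no cycles, every loop has length at least~$1$, so $s\le r$, and a finite graph~$G$ has only finitely many loops of length at most~$r$. Hence no question of convergence or reordering arises at this stage.

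Next I would characterize $\Configs_r\setminus\Configs^*_r$ as exactly those configurations in which some representative edge is traversed at least twice by the configuration as a whole—because a single loop revisits it, because two loops share it, or because two loops coincide. On this set I would build a sign-reversing involution~$\Phi$: given such a configuration, let~$e$ be the lexicographically smallest multiply-traversed edge, single out a canonical pair of passages through~$e$, and \emph{swap the two strands} there, reconnecting the incoming half of one passage to the outgoing half of the other. If the two passages lie in one loop this splits it in two ($s\mapsto s+1$); if they lie in different loops it merges them ($s\mapsto s-1$). Either way the multiset of traversed edges is preserved, so the product of all edge weights and the total length~$r$ are unchanged, and—because the strands are reconnected along the very same edge—the turning angles at the endpoints of~$e$, and hence the total winding angle $\sum_i\alpha(\Loop_i)$, are unchanged as well. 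Since $\prod_i\sgn(\Loop_i)=(-1)^s\exp\bigl(\tfrac{i}2\sum_i\alpha(\Loop_i)\bigr)$ by~\eqref{eqn:loopsign}, changing~$s$ by one while fixing the winding angle flips the overall sign, and as $\Phi$ always changes~$s$ it is moreover fixed-point free.

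The hard part will be the bookkeeping of the \emph{symmetry factors}. Because $\Phi$ changes~$s$, the prefactors $1/s!$ do not match between a configuration and its image, and periodic loops contribute nontrivial multiplicities $m(\Loop)$ through the weight~\eqref{eqn:loopweight}; reconciling these is exactly the point at which Vdovichenko's argument fails and where the multiplicity must be present in the weight. Rather than a bare one-to-one pairing, I would therefore group configurations by their underlying unordered trajectory and show that, for each such trajectory that is not a genuine edge-disjoint collection, the signed contributions—each carrying its factor $1/\bigl(s!\,\prod_i m(\Loop_i)\bigr)$—cancel when summed over all ordered loop configurations realizing it; this requires matching the number and multiplicities of the configurations feeding into a given split or merge against the changes in $s!$ and in the $m(\Loop_i)$. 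Finally, I would need to verify that the canonical choice of~$e$ and of the passage pair is preserved by~$\Phi$ so that $\Phi^2=\mathrm{id}$, and to define the reconnection carefully when the two passages traverse~$e$ in opposite directions, so as to keep every walk non-backtracking and the winding-angle cancellation intact.
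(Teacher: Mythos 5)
Your reduction of the theorem to the vanishing of the contributions from $\Configs_r\setminus\Configs^*_r$ (given Theorem~\ref{thm:Zedgedisjoint}), and the observation that only finite sums are involved, are exactly the paper's first moves, and the strand-swap idea is the right germ. But your central mechanism is wrong in one of the three cases that occur. If the two selected passages through the edge $uv$ belong to the \emph{same} loop but traverse $uv$ in \emph{opposite} directions, then ``reconnecting the incoming half of one passage to the outgoing half of the other'' forces an immediate reversal $u\to v\to u$, i.e.\ a backtracking walk; the only admissible operation is to reverse one of the two intervening sub-paths. That operation leaves the number of loops $s$ \emph{unchanged}, so no sign can come from the factor $(-1)^s$ in $\prod_i\sgn(\Loop_i)=(-1)^s\exp\bigl(\tfrac{i}2\sum_i\alpha(\Loop_i)\bigr)$, and the winding angle is \emph{not} preserved: one has to show that $\alpha\bigl((u,v)\concat\Path_1\concat(v,u)\bigr)$ is an odd multiple of $\pi$, so that reversing $\Path_1$ changes the total winding angle by an odd multiple of $2\pi$ and thereby flips the sign. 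This is precisely Case~3 in the paper's proof of Proposition~\ref{pro:loopcancellation}, and it is incompatible with the invariant you assert (``$s$ always changes, winding angle fixed''); as stated, your map is neither sign-reversing nor fixed-point-free by the argument you give on these configurations.

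The second, more fundamental gap is the one you defer to the end: the ``bookkeeping of symmetry factors'' is not a finishing touch but the entire content of the proposition, and it \emph{cannot} be settled by any sign-reversing involution on unlabelled configurations. Figure~\ref{fig:Cancellation} of the paper exhibits configurations on a single edge multiset whose weights, because of the prefactors $1/s!$ and the multiplicities $m(\Loop_i)$ in~\eqref{eqn:loopweight}, have \emph{different absolute values}; their cancellation is intrinsically many-to-many, which is exactly why Vdovichenko's one-by-one pairing fails. The paper resolves this by passing to \emph{labelled} loop configurations: each step carries a distinct label, condition~\ref{condition3} breaks the rotational symmetry of periodic loops, the labelled weight~\eqref{eqn:lloopweight} equals $m(\Loop)\,w(\Loop;x)$, and a counting argument shows that the configuration $(\Loop_1,\dots,\Loop_s)$ gives rise to exactly $n!/\bigl(\prod_i m(\Loop_i)\prod_i k_i!\bigr)$ labelled configurations, whence the labelled sum is $n!$ times the unlabelled sum with all its $1/s!$ and multiplicity factors. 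Only at the labelled level does a genuine one-to-one sign-reversing bijection exist, and the labels (smallest label $a$ on a multiply-labelled edge, second-smallest label $b$ on that same edge) are what make the choice of the swapped pair canonical and stable under the map, so that it squares to the identity. Your plan to ``group configurations by underlying trajectory and match the factors'' is in effect a promise to reconstruct this device; until it is carried out, the proof is missing its key idea at exactly the point where the theorem is delicate. A smaller slip: you select the smallest multiply-traversed \emph{representative} edge, but a configuration can fail edge-disjointness only on \emph{additional} edges (two loops sharing a chain of additional edges while using disjoint representative edges), so the selection must range over all edges of~$G$.
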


Clearly, since $\Configs_r$ is a finite set for every fixed~$r$, this result 
is an immediate consequence of the following proposition:

\begin{proposition}
	\label{pro:loopcancellation}
	For all $r>0$,
	\[
		\sum_{s=1}^\infty \sum_{ (\Loop_1, \dots, \Loop_s) \in \Configs_r 
		\setminus \Configs^*_r} \frac1{s!} \prod_{i=1}^s w(\Loop_i; x) = 0.
	\]
\end{proposition}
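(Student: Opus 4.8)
The plan is to exhibit a sign-reversing, weight-preserving involution on the set $\Configs_r\setminus\Configs^*_r$ of ``bad'' configurations, so that the terms of the sum cancel in pairs. A configuration $(\Loop_1,\dots,\Loop_s)$ lies in $\Configs_r\setminus\Configs^*_r$ precisely when the multiset of all edges traversed by $\Loop_1,\dots,\Loop_s$ (counted across the whole configuration) contains a repetition; equivalently, some edge of~$G$ is used at least twice in total. First I would fix once and for all a total order on the edges of~$G$, and let~$e=pq$ be the smallest edge that is traversed at least twice. Since the involution I am about to describe will not alter the multiset of traversed edges, this distinguished edge~$e$ is preserved by the map, which is what ultimately makes it an involution.

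The core operation is a local \emph{switching} at~$e$. Two passages of the configuration run through~$e$; write each of the two affected strands as the concatenation of an incoming piece, the step along~$e$, and an outgoing piece. When the two passages traverse~$e$ in the same direction~$\dir{pq}$, I interchange their outgoing pieces at the head~$q$; when they traverse~$e$ in opposite directions, I reroute by reversing one strand between the two passages. The switching leaves the multiset of traversed edges unchanged, so both the total length~$r$ and the accompanying product of edge weights are preserved. The crucial geometric observation, in the same-direction case, is that the switch merely \emph{interchanges} the two turning angles at~$q$ between the strands, so the multiset of all turning angles, and hence the winding-angle phase $\exp\bigl(\tfrac{i}{2}\sum_k\alpha(\Loop_k)\bigr)$, is left invariant; meanwhile the number of loops~$s$ changes by exactly one, since two passages lying in distinct entries $\Loop_i,\Loop_j$ ($i\neq j$) get merged into a single loop, whereas two passages within one entry split it into two. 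Consequently the overall sign $\prod_i\sgn(\Loop_i)=(-1)^s\exp\bigl(\tfrac{i}{2}\sum_k\alpha(\Loop_k)\bigr)$ is multiplied by~$-1$, which is the desired reversal; the opposite-direction case requires a parallel but more delicate angle computation.

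What remains, and what I expect to be the main obstacle, is the exact bookkeeping of the symmetry factors: the ordering factor~$1/s!$ arising from ordered sequences, and the multiplicity factors~$1/m(\Loop_i)$ carried by periodic loops and by repeated identical loops. Under a merge or split these do not match naively — for instance, merging two equal entries produces a periodic loop of multiplicity~$2$ — and this is exactly where Vdovichenko's argument, which omitted the multiplicity from~\eqref{eqn:loopweight}, breaks down. To control this I would not run the involution on configurations directly, but on a refined set of \emph{decorated} configurations: I unfold each weight $\sgn(\Loop)/m(\Loop)\cdot\prod x$ into a signed count over based, directed closed walks (with the contributions of rotations and of the two orientations absorbed, which accounts for the~$1/m$), and I mark the two passages through~$e$ that are to be switched. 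The number of admissible markings then supplies precisely the combinatorial factor that reconciles $1/s!$ with $1/(s\pm1)!$, so that on the decorated objects the switching acts freely and the cancellation becomes a clean term-by-term pairing. Verifying that this compensation is exact in every case — same and opposite direction, distinct and coincident entries, primitive and periodic loops — is the heart of the argument; pushing the decorations back down then yields the stated identity.
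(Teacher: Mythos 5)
Your overall strategy is the same as the paper's: a local switching move at a multiply-used edge that merges two loops or splits one, a sign flip detected through winding angles, and -- crucially -- the recognition that the raw configurations do not cancel one by one because of the symmetry factors $1/s!$ and $1/m(\Loop_i)$, which is exactly Vdovichenko's error. Your same-direction sign analysis (turning angles at the head are merely interchanged, $s$ changes by one, so $\prod_i\sgn(\Loop_i)=(-1)^s\exp\bigl(\tfrac i2\sum_k\alpha(\Loop_k)\bigr)$ flips) is correct and coincides with the paper's Cases 1 and 2. The gap is in the step you yourself call the heart of the argument: the decoration you propose cannot make the cancellation term by term. If you unfold $1/m(\Loop_i)$ as a sum over based, directed closed walks, each walk must carry the normalization $1/(2n_i)$, because a loop with $n_i$ steps and multiplicity $m_i$ has exactly $2n_i/m_i$ such representatives. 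This normalization is incompatible with the switching: a merge replaces $\frac1{s!}\cdot\frac1{2n_i}\cdot\frac1{2n_j}$ by $\frac1{(s-1)!}\cdot\frac1{2(n_i+n_j)}$, so the absolute values of the paired decorated terms differ by the configuration-dependent factor $2sn_in_j/(n_i+n_j)$. Your marking of the two switched passages cannot repair this: an edge traversed $k$ times admits $\binom k2$ markings both before and after the switch (the multiset of traversed edges is preserved), so the marking count contributes identically on both sides and reconciles nothing; no reshuffling of the normalization between walks and markings removes the mismatch, because $\prod_i n_i$ behaves multiplicatively under merging while the step count behaves additively.

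The missing idea is a decoration whose count, relative to the configuration weight, is \emph{constant} on the classes preserved by the switch, and this is what the paper's labelled loops achieve. One labels all $n$ steps of the configuration with the distinct integers $1,\dots,n$, with the normalization that for a periodic loop the label on the first step is smallest among the labels on its periodic repetitions. A configuration whose distinct loops occur with repetition numbers $k_1,\dots,k_t$ then has exactly $n!\big/\bigl(\prod_i m(\Loop_i)\prod_t k_t!\bigr)$ labellings, the labelled weight is $\prod_i m(\Loop_i)\,w(\Loop_i;x)$ (the $1/m$ and $1/s!$ disappear), and the total labelled weight equals $n!$ times the total contribution of all orderings of that configuration to your sum. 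Since the total step count $n$ is invariant under merging and splitting, the global factor $n!$ is harmless, and the labels also solve your selection problem for free: the switch is keyed to the smallest label occurring on a multiply-labelled edge and the second-smallest label on that same edge, data preserved by the switch itself, so one gets a weight-negating bijection of $\lConfigs_{n,r}$ without any auxiliary edge order or marking. Finally, note that the case you defer -- both passages in one loop, traversed in opposite directions -- is genuinely different rather than ``parallel'': there $s$ does \emph{not} change (one loop maps to one loop by reversing the strand between the passages), and the sign flip comes instead from the identity $\alpha\bigl((u,v)\concat\Path_1\concat(v,u)\bigr)\in\pi+2\pi\Z$; for passages in two distinct loops the direction issue evaporates, since a loop's sign is invariant under reversal and one may reorient its representation at will.
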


The proof of Proposition~\ref{pro:loopcancellation} is involved, and we 
postpone it to Section~\ref{ssec:loopcancellation}. For now, we assume that 
Proposition~\ref{pro:loopcancellation} and hence Theorem~\ref{thm:Zloopcfgs} 
hold, and explain how Theorems \ref{thm:Zexploopweights} 
and~\ref{thm:Zdeterminant} follow from this.

\begin{proof}[Proof of Theorem~\ref{thm:Zexploopweights}]
	By splitting the sum over the set of loop configurations~$\Configs_r$ in 
	Theorem~\ref{thm:Zloopcfgs} according to the lengths of the individual 
	loops, using~\eqref{eqn:fr(x)} we can write
	\begin{align}
		Z(x)
		&= 1 + \sum_{r=1}^\infty \sum_{s=1}^\infty \frac1{s!} 
		\sum_{r_1+\dots+r_s = r} \prod_{i=1}^s \biggl( \sum_{\Loop\in 
		\Loops_{r_i}} w(\Loop; x) \biggr) \notag \\
		&= 1 + \sum_{r=1}^\infty \sum_{s=1}^\infty \frac1{s!} 
		\sum_{r_1+\dots+r_s = r} \prod_{i=1}^s f_{r_i}(x).
		\label{eqn:Zloopcfgs1}
	\end{align}

	Now suppose that, given~$x$, there exist $\gamma\in (0,1)$ and $C<\infty$ 
	such that
	\begin{equation}
		\label{eqn:fcondition}
		\abs{f_r(x)} \leq C \gamma^r \qquad \text{for all~$r$}.
	\end{equation}
	For future reference, we note that this condition is implied by the 
	stronger condition that
	\begin{equation}
		\label{eqn:fabscondition}
		\sum_{\Loop \in \Loops_r} \abs{w(\Loop; x)} \leq C\gamma^r
		\qquad \text{for all~$r$}.
	\end{equation}
	Under condition~\eqref{eqn:fcondition}, if we write $h(r,s)$ for the 
	summand in~\eqref{eqn:Zloopcfgs1}, we have
	\[
		\abs{h(r,s)}
		= \abs[\bigg]{ \frac1{s!} \sum_{r_1+\dots+r_s = r} \prod_{i=1}^s 
		f_{r_i}(x) }
		\leq \frac{C^s}{s!} \binom{r-1}{s-1} \gamma^r,
	\]
	and thus
	\[
		\sum_{s=1}^\infty \sum_{r=1}^\infty \abs{h(r,s)}
		\leq \sum_{s=1}^\infty \frac{C^s}{s!} \sum_{r=s}^\infty 
		\binom{r-1}{s-1} \gamma^r
		= \exp\left( \frac{C\gamma}{1-\gamma} \right)-1.
	\]
	Hence, we can apply Fubini's theorem to interchange the order of summation 
	over $r$ and~$s$ in~\eqref{eqn:Zloopcfgs1}, which yields
	\[
		Z(x)
		= 1 + \sum_{s=1}^\infty \frac1{s!} \sum_{r=1}^\infty 
		\sum_{r_1+\dots+r_s = r} \prod_{i=1}^s f_{r_i}(x).
	\]
	Note that under condition~\eqref{eqn:fcondition}, $\sum_r f_r(x)$ is 
	absolutely convergent. We now apply Mertens' theorem, which says that if a 
	series $\sum_r a_r$ converges absolutely, and the series $\sum_r b_r$ 
	converges, then their Cauchy product converges to $(\sum_r a_r) (\sum_r 
	b_r)$. In particular, by induction, the $s$-fold Cauchy product of the 
	series $\sum_r a_r$ with itself, which is $\sum_r \sum_{r_1+\dots+r_s = r} 
	a_{r_1} a_{r_2} \dotsm a_{r_s}$, converges to $(\sum_r a_r)^s$. Applying 
	this with $a_r = f_r(x)$, we obtain
	\begin{equation}
		\label{eqn:Zexploops}
		Z(x)
		= 1 + \sum_{s=1}^\infty \frac1{s!} \biggl( \sum_{r=1}^\infty f_r(x) 
		\biggr)^s
		= \exp\biggl( \sum_{r=1}^\infty f_r(x) \biggr).
	\end{equation}

	Observe that this result holds already under the weaker of the two 
	conditions \eqref{eqn:fcondition} and~\eqref{eqn:fabscondition}, but that 
	under the stronger condition~\eqref{eqn:fabscondition}, the loop weights 
	can in fact be summed in any order. We will now show that the condition of 
	Theorem~\ref{thm:Zexploopweights} implies~\eqref{eqn:fabscondition}. 
	Indeed, under the condition of Theorem~\ref{thm:Zexploopweights}, there 
	exists $\gamma\in(0,1)$ such that $(d_{uv}-1)|x_{uv}| \leq \gamma$ for all 
	edges $uv\in E$. Observing that if a loop takes a step along~$uv$, then 
	there are at most $d_{uv}-1$ possibilities for the next step, this implies 
	that the sum of $\abs{w(\Loop; x)}$ over all loops~$\Loop$ of $n$~steps is 
	bounded by $\card{V}\gamma^n$. Since a loop of length~$r$ takes at 
	least~$r$ steps, summing over $n\geq r$ 
	yields~\eqref{eqn:fabscondition}.
\end{proof}

\begin{proof}[Proof of Theorem~\ref{thm:Zdeterminant}]
	Recall definition~\eqref{eqn:Lambda(x)} of the entries of~$\Lambda(x)$, 
	which are indexed by the directed representative edges of~$G$. We can 
	interpret this matrix as a transition matrix for non-backtracking paths on 
	the graph~$G'$ which is represented by~$G$. This represented graph~$G'$ 
	can be obtained from~$G$ by removing every chain of additional edges 
	from~$G$, and identifying the two vertices at the ends of this chain (see 
	Figure~\ref{fig:Torus} for an example).
	
	Indeed, consider two directed representative edges $\dir{uv}$ and 
	$\dir{wz}\neq \dir{vu}$ in~$G$, and write $\dir{uv}'$ and~$\dir{wz}'$ for 
	the corresponding directed edges in the represented graph~$G'$. By 
	construction, a non-backtracking path in~$G'$ can make a step from 
	$\dir{uv}'$ to~$\dir{wz}'$ if and only if $v$ is linked to~$w$ in the 
	graph~$G$, since only then will $v$ be identified with~$w$ in~$G'$. This 
	step corresponds to either a direct step from $\dir{uv}$ to~$\dir{wz}$ 
	in~$G$ (if $v=w$), or to a sequence of steps along the chain linking $v$ 
	to~$w$. In either case, the matrix entry $\Lambda_{\dir{uv}, \dir{wz}} 
	(x)$ picks up all edge weights and turning angles associated with these 
	steps in~$G$.

	We can now interpret this entry as describing the weight picked up by a 
	non-backtracking walk in~$G'$ when it steps from $\dir{uv}'$ 
	to~$\dir{wz}'$. Viewed in this way, the entry of the matrix~$\Lambda^r(x)$ 
	indexed by $\dir{uv}$ and~$\dir{wz}$ is equal to the sum of the weights of 
	all non-backtracking paths in~$G'$ of $r$~steps starting from $\dir{uv}'$ 
	and ending on~$\dir{wz}'$. In particular, the sum of the diagonal entries 
	of $\Lambda^r(x)$ is equal to the sum of the weights of all 
	non-backtracking paths in~$G'$ of $r$~steps starting and ending on the 
	same directed edge.

	Now consider a loop~$\Loop$ of length~$r$ in~$G$. Note that it is possible 
	to start traversing this loop from each step it takes along a 
	representative edge in two directions. Mapping the paths thus obtained to 
	the represented graph~$G'$ yields precisely $2r/m(\Loop)$ different 
	non-backtracking paths of $r$~steps in~$G'$ that start and end on the same 
	directed edge. By \eqref{eqn:loopweight}, \eqref{eqn:loopsign} 
	and~\eqref{eqn:fr(x)}, it now follows that
	\[
		\tr \Lambda^r(x) = -2r f_r(x),
	\]
	where the minus sign comes from the minus sign in the 
	definition~\eqref{eqn:loopsign} of the sign of a loop in terms of its 
	winding angle. Expressed in the eigenvalues $\lambda_i(x)$ 
	of~$\Lambda(x)$, we therefore have that
	\begin{equation}
		\label{eqn:feigenvals}
		f_r(x) = -\frac1{2r} \sum_i \lambda_i^r(x).
	\end{equation}

	In particular, condition~\eqref{eqn:fcondition} is satisfied if $\rho(x) = 
	\max_i \abs{ \lambda_i(x) } < 1$, so in this case the same argument as in 
	the proof of Theorem~\ref{thm:Zexploopweights} 
	yields~\eqref{eqn:Zexploops}. Moreover, if $\rho(x) < 1$, then 
	using~\eqref{eqn:feigenvals} we can write
	\[
		Z(x)
		= \exp\biggl( -\frac12 \sum_{r=1}^\infty \sum_i \frac{\lambda_i^r(x)}r 
		\biggr)
		= \exp\biggl( -\frac12 \sum_i \sum_{r=1}^\infty \frac{\lambda_i^r(x)}r 
		\biggr),
	\]
	and since $\sum_{r=1}^\infty u^r/r = -\ln(1-u)$ if $|u|<1$, we conclude 
	that
	\[
		Z(x)
		= \prod_i \bigl( 1-\lambda_i(x) \bigr)^{1/2}
		= \sqrt{\det\bigl( \Id - \Lambda(x) \bigr)}.
		\qedhere
	\]
\end{proof}

\subsection{Cancellation of non-edge-disjoint loop configurations}
\label{ssec:loopcancellation}

We now turn to the missing step in the proofs of Theorems 
\ref{thm:Zexploopweights} and~\ref{thm:Zdeterminant}, which is the proof of 
Proposition~\ref{pro:loopcancellation}. That is, we must show that the weights 
of all loop configurations $(\Loop_1, \dots, \Loop_s)$ which are not 
edge-disjoint and satisfy $r(\Loop_1) + \dots + r(\Loop_s) = r$ for a 
given~$r$, cancel each other. What complicates matters here, is the fact that 
these loop configurations do not cancel each other one by one, see for example 
Figure~\ref{fig:Cancellation}.\footnote{A picture of the same configurations 
appears in~\cite{DolShtSht} to point out the error in Vdovichenko's paper; it 
is crucial here to take the multiplicities of the loops into account.} Our 
strategy of the proof is to map loop configurations to so-called 
\emph{labelled} loop configurations, which do cancel each other one by one, 
and show that this implies cancellation of the unlabelled loop configurations 
for combinatorial reasons.

We will therefore start by introducing the notion of a \emph{labelled} loop, 
and work our way from there towards the notion of a labelled loop 
configuration, and the proof of their cancellation. In words, a labelled loop 
is a loop with a label attached to each step it takes, where the labels are 
distinct positive integers. For periodic loops, the first step is repeated 
after completing a period, and we require that the label of the first step of 
the loop is smaller than the label associated with each of these 
repetitions.

Formally, a labelled loop~$\lLoop$ is a sequence $(v_0, a_0, v_1, a_1, \dots, 
v_{n-1}, a_{n-1})$ satisfying the following conditions:
\begin{enumerate}
	\renewcommand\labelenumi{\theenumi}
	\renewcommand\theenumi{\textbf{L\arabic{enumi}}}
	\item $\Loop = (v_0, \dots, v_{n-1})$ is a loop;
		\label{condition1}
	\item $(a_0, a_1, \dots, a_{n-1})$ is a sequence of distinct positive 
		integers, called the \emph{labelling} of the loop;
		\label{condition2}
	\item if $\Loop$ is periodic, i.e.\ $m(\Loop) > 1$, then $a_0$ is smaller 
		than $a_{kn/m(\Loop)}$ for all $k\in \{1,2,\dots,m(\Loop)-1\}$.
		\label{condition3}
\end{enumerate}
We call the number~$a_i$ the \emph{label on step~$i+1$} of the loop~$\Loop$; 
we also regard it as a label assigned to the edge $v_iv_{i+1}$. We will use 
the superscript~$\scriptstyle \labelmark$ for labelled loops, and the 
unlabelled loop corresponding to a labelled loop will consistently be denoted 
by dropping this superscript: if $\lLoop$ is a labelled loop, then~$\Loop$ is 
the corresponding unlabelled loop, and so on.

\begin{figure}
	\begin{center}
		\includegraphics{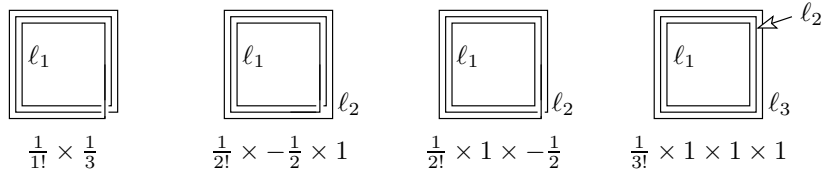}
	\end{center}
	\caption{Four loop configurations on the same vertices and edges, where 
	the traversals of the same edge have been drawn slightly apart to make 
	them discernible. The factors $\frac1{s!} \prod_{i=1}^s \sgn(\Loop_i) / 
	m(\Loop_i)$ are spelled out below each loop configuration to show that the 
	sum of their signed weights is~$0$.}
	\label{fig:Cancellation}
\end{figure}

Observe that one of the effects of labelling loops is that it breaks the 
periodicity of periodic loops: sequences representing labelled loops cannot be 
periodic. Therefore, if~$\Loop$ is periodic, we do not assign to the labelled 
loop $\lLoop = (v_0, a_0, \dots, v_{n-1}, a_{n-1})$ the same weight as to its 
unlabelled counterpart. Instead, we define the weights of labelled loops in 
general by
\begin{equation}
	\label{eqn:lloopweight}
	w(\lLoop; x) = \sgn(\Loop) \prod_{i=0}^{n-1} x_{v_iv_{i+1}},
\end{equation}
where the sign is defined in terms of the winding angle of~$\Loop$ 
by~\eqref{eqn:loopsign}, as before. Note that this weight is actually 
independent of the particular labelling of the loop, and that $w(\lLoop; x) = 
m(\Loop) w(\Loop; x)$.

We write $n(\Loop)$ for the number of steps of a loop~$\Loop$ (recall that this 
is not necessarily the same as the \emph{length}~$r(\Loop)$ of the loop). By a 
\emph{labelled loop configuration} we mean a collection $\{\lLoop_1, \dots, 
\lLoop_s\}$ of labelled loops, in which all labels are distinct and take 
values from the set $\bigl\{ 1, 2, \dots, \sum_{i=1}^s n(\Loop_i) \bigr\}$. In 
particular, any loop configuration $(\Loop_1, \dots, \Loop_s)$ can be turned 
into a labelled loop configuration by attaching a label to every step of every 
loop in such a way, that condition~\ref{condition3} above is fulfilled for 
every labelled loop obtained, and all labels $1, 2, \dots, \sum_{i=1}^s 
n(\Loop_i)$ are used.

Now fix $r$ and~$n$, and consider a loop configuration $(\Loop_1, \dots, 
\Loop_s)$ which is not edge-disjoint and satisfies $\sum_{i=1}^s r(\Loop_i) = 
r$ and $\sum_{i=1}^s n(\Loop_i) = n$. Let~$t$ denote the number of distinct 
loops in $(\Loop_1, \dots, \Loop_s)$, and write $k_1,\dots,k_t$ for the 
respective number of times each of them occurs, so that $k_1 + \dots + k_t = 
s$. Consider the collection of all labelled loop configurations $\{\lLoop_1, 
\dots, \lLoop_s\}$ that can be obtained from $(\Loop_1, \dots, \Loop_s)$ by 
labelling the loops, as described above. For a periodic loop~$\Loop_i$, only 
one of the rotations of its labelling, rotated over a multiple of the smallest 
period, satisfies condition~\ref{condition3}. Furthermore, interchanging the 
labellings of two identical loops $\Loop_i$ and~$\Loop_j$ ($\Loop_i = \Loop_j$ 
but $i\neq j$) yields the same labelled loop configuration. Therefore, the 
number of labelled loop configurations we obtain from $(\Loop_1, \dots, 
\Loop_s)$ is precisely
\[
	\frac{n!}{\prod_{i=1}^s m(\Loop_i) \prod_{i=1}^t k_i!}.
\]

We assign to each of these labelled loop configurations the same weight 
$\prod_{i=1}^s w(\lLoop_i; x)$, where we use the fact that according to the 
definition~\eqref{eqn:lloopweight}, $w(\lLoop_i; x)$ does not depend on the 
actual labelling. Then the total weight of all labelled loop configurations 
associated with $(\Loop_1, \dots, \Loop_s)$ is
\[
	\frac{n!}{\prod_{i=1}^s m(\Loop_i) \prod_{i=1}^t k_i!} \prod_{i=1}^s 
	w(\lLoop_i; x)
	= \frac{n!}{\prod_{i=1}^t k_i!} \prod_{i=1}^s w(\Loop_i; x).
\]
We claim that this is exactly $n!$ times the total weight that all the 
permutations of the loop configuration $(\Loop_1, \dots, \Loop_s)$ contribute 
to the sum in Proposition~\ref{pro:loopcancellation}. Indeed, there are 
precisely
\[
	\frac{s!}{\prod_{i=1}^s k_i!}
\]
such permutations, and the weight each of them contributes to the sum is
\[
	\frac1{s!} \prod_{i=1}^s w(\Loop_i; x).
\]

We conclude that to prove Proposition~\ref{pro:loopcancellation}, it suffices 
to show that for given $n$ and~$r$, the weights of all labelled loop 
configurations $\{\lLoop_1, \dots, \lLoop_s\}$ such that $\sum_{i=1}^s 
n(\Loop_i) = n$, $\sum_{i=1}^s r(\Loop_i) = r$ and $(\Loop_1, \dots, \Loop_s)$ 
is not edge-disjoint, sum to~0. Write $\lConfigs_{n,r}$ for this collection of 
labelled loop configurations. We will now prove the desired cancellation of 
weights, and hence Proposition~\ref{pro:loopcancellation}, by finding a 
bijection $g\colon \lConfigs_{n,r} \to \lConfigs_{n,r}$ which maps each 
labelled loop configuration to a labelled loop configuration which has a 
weight of the opposite sign, but with the same absolute value.

\begin{proof}[Proof of Proposition~\ref{pro:loopcancellation}]
	Before we go into the formal details of the bijection, let us give an 
	informal description of how it will work. Consider a labelled loop 
	configuration $\{\lLoop_1, \dots, \lLoop_s\} \in \lConfigs_{n,r}$, and let 
	$E^\labelmark$ be the set of edges in~$G$ that are assigned more than~1 
	label in this configuration. Find the smallest of all the labels that are 
	assigned to the edges in~$E^\labelmark$, let $a$ be this label, and let 
	$uv$ be the edge to which this label is assigned. Next, find the second 
	smallest label~$b$ which is assigned to the edge~$uv$.

	The label~$a$ labels a step of one of the loops~$\Loop_i$. The label~$b$ 
	either labels another step of the same loop~$\Loop_i$, or it labels a step 
	of a second loop~$\Loop_j$, $i\neq j$. The bijection involves 
	interchanging the ``connections'' on one side of the two steps marked $a$ 
	and~$b$ (either at the vertex~$u$ or at the vertex~$v$), as illustrated in 
	Figure~\ref{fig:Bijection}. It is clear that this operation does not 
	change the absolute value of the weight of the configuration, since the 
	total number of steps that go through a given edge does not change. But 
	Figure~\ref{fig:Bijection} also suggests that the operation corresponds to 
	increasing or decreasing the number of ``crossings'' in the configuration 
	by~1, which should indeed lead to a change in sign.
	
	However, signs were formally defined in terms of winding angles, not 
	numbers of crossings, since it is more difficult to make sense of the 
	latter when loops are not edge-disjoint. Furthermore, we must still 
	formally define the mapping~$g$. We will now deal with these technical 
	issues.

	\begin{figure}
		\begin{center}
			\includegraphics{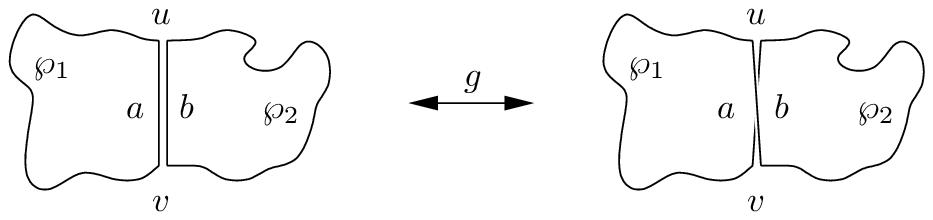}\\
			\includegraphics{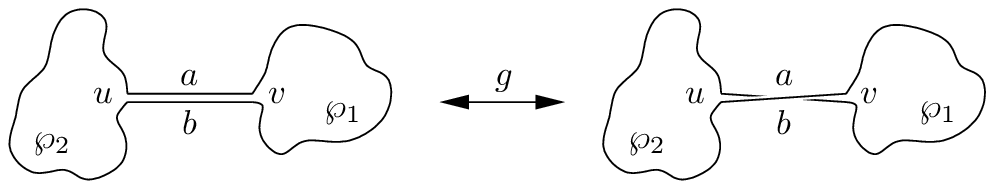}
		\end{center}
		\caption{All cases that occur in the cancellation of labelled loop 
		diagrams, as explained in the text. The curves $\Path_1$ and~$\Path_2$ 
		represent arbitrary paths connected to the vertices $u$ and~$v$.}
		\label{fig:Bijection}
	\end{figure}

	For the formal treatment of the bijection, we need to introduce some 
	additional notation. Given a sequence $a = (a_0, \dots, a_n)$ of arbitrary 
	elements, we write $a^{-1}$ for its \emph{reversion} $a^{-1} = (a_n, 
	a_{n-1}, \dots, a_0)$. If $b = (b_0, \dots, b_m)$ is another sequence of 
	arbitrary elements, we write $a \oplus b$ for the concatenation of $a$ 
	with~$b$, that is,
	\[
		a \concat b = (a_0, \dots, a_n, b_0, \dots, b_m).
	\]

	The weight of a labelled loop configuration $\{\lLoop_1, \dots, 
	\lLoop_s\}$ is defined as the product of the signs of the loops $\Loop_1, 
	\dots, \Loop_s$, times the product of all the edge weights picked up by 
	all the loops. As was anticipated above, the product of edge weights will 
	not change under the bijection, so we will only be concerned with the 
	product of the signs of the loops. We recall from \eqref{eqn:loopangle} 
	and~\eqref{eqn:loopsign} that the sign of a loop $\Loop = (v_0, \dots, 
	v_{n-1})$ is defined in terms of its winding angle as
	\begin{equation}
		\label{eqn:rloopsign}
		\sgn(\Loop)
		= - \exp\Bigl( \frac{i}2 \alpha(\Loop) \Bigr),
	\end{equation}
	where the winding angle $\alpha(\Loop)$ is given by
	\begin{equation}
		\label{eqn:rloopangle}
		\alpha(\Loop)
		= \sum_{i=0}^{n-1} \angle(v_{i+1}-v_i, v_{i+2}-v_{i+1}).
	\end{equation}

	We now define the winding angle and sign of a closed path $(v_0, \dots, 
	v_{n-1})$ by the exact same formulas. In particular, all rotations of a 
	loop~$\Loop$ have the same winding angle and sign. On the other hand, the 
	reversion~$\Loop^{-1}$ of~$\Loop$ and all its rotations are traversed in 
	the opposite direction, and therefore they all have winding angle 
	$\alpha(\Loop^{-1}) = -\alpha(\Loop)$. However, since the winding angle of 
	a loop is a multiple of $2\pi$, we do have that
	\begin{equation}
		\label{eqn:signreversion}
		\sgn(\Loop^{-1}) = \sgn(\Loop)
		\qquad \text{for all closed paths~$\Loop$}.
	\end{equation}
	We call all the rotations of a loop~$\Loop$, and all rotations of its 
	reversion~$\Loop^{-1}$, alternative \emph{representations} of~$\Loop$. All 
	these representations have the same sign. Likewise, the rotations of a 
	labelled loop~$\lLoop$ and its reversion~$\smash{(\lLoop)}^{-1}$ will be 
	called \emph{representations} of this labelled loop.

	We also need to define the winding angle for paths in~$G$ which are not 
	loops. Note that a path $\Path = (v_0, \dots, v_{n-1})$ is not a loop if 
	$v_0v_{n-1} \notin E$, $v_0 = v_{n-2}$, or $v_1 = v_{n-1}$. If we follow 
	such a path from $v_0$ to~$v_{n-1}$, we turn through $n-2$ angles, and it 
	is natural to define the winding angle of~$\Path$ by
	\[
		\alpha(\Path)
		= \sum_{i=0}^{n-3} \angle(v_{i+1}-v_i, v_{i+2}-v_{i+1}).
	\]

	We now have all the notation we need to define and analyse the bijection 
	formally. So consider a labelled loop configuration $\{\lLoop_1, \dots, 
	\lLoop_s\} \in \lConfigs_{n,r}$, and define $E^\labelmark$, $a$, $b$ and 
	$uv$ as above. We will now explain to which labelled loop configuration 
	our configuration $\{\lLoop_1, \dots, \lLoop_s\}$ is mapped by the 
	bijection, and prove that the image has the opposite sign, and hence the 
	opposite weight. There are three possible cases to consider, which are 
	illustrated in Figure~\ref{fig:Bijection}.

	\textbf{Case 1:} \textit{The labels $a$ and~$b$ belong to different 
	labelled loops.} Let $\lLoop_i$ be the labelled loop containing label~$a$, 
	and let $\lLoop_j$ be the labelled loop containing label~$b$. Then these 
	labelled loops have representations of the form $\lLooprep_i = (u,a,v) 
	\concat \lPath_1$ and $\lLooprep_j = (u,b,v) \concat \lPath_2$, 
	respectively, where $\lPath_1$ and~$\lPath_2$ are paths interspersed with 
	labels. We can now form the combined representation
	\[
		\lLooprep_{ij}
		= (u,a,v) \concat \lPath_1 \concat (u,b,v) \concat \lPath_2
	\]
	of a new labelled loop~$\lLoop_{ij}$. Our bijection maps $\{\lLoop_1, 
	\dots, \lLoop_s\}$ to the labelled loop configuration
	\[
		\{\lLoop_1, \dots, \lLoop_s, \lLoop_{ij}\}
		\setminus \{\lLoop_i, \lLoop_j\}.
	\]
	To see that this labelled loop configuration has the opposite sign of its 
	pre-image $\{\lLoop_1, \dots, \lLoop_s\}$, note that by 
	\eqref{eqn:rloopsign}--\eqref{eqn:signreversion},
	\[\begin{split}
		\sgn(\Loop_i)\sgn(\Loop_j)
		&= \sgn(\Looprep_i)\sgn(\Looprep_j)
		 = \exp\Bigl( \frac{i}2 \alpha(\Looprep_i) + \frac{i}2 
		 \alpha(\Looprep_j) \Bigr) \\
		&= \exp\Bigl( \frac{i}2 \alpha(\Looprep_{ij}) \Bigr)
		 = -\sgn(\Looprep_{ij})
		 = -\sgn(\Loop_{ij}).
	\end{split}\]

	\textbf{Case 2:} \textit{The labels $a$ and~$b$ are on steps of the same 
	labelled loop taken in the same direction.} This case is the reverse of 
	Case~1. The labels $a$ and~$b$ are in a labelled loop~$\lLoop_i$ which has 
	a representation of the form
	\[
		\lLooprep_i
		= (u,a,v) \concat \lPath_1 \concat (u,b,v) \concat \lPath_2.
	\]
	From this we obtain the representations $(u,a,v) \concat \lPath_1$ and 
	$(u,b,v) \concat \lPath_2$ of two new labelled loops $\lLoop_{i1}$ 
	and~$\lLoop_{i2}$. The bijection maps $\{\lLoop_1, \dots, \lLoop_s\}$ to 
	the labelled loop configuration
	\[
		\{\lLoop_1, \dots, \lLoop_s, \lLoop_{i1}, \lLoop_{i2}\}
		\setminus \{\lLoop_i\}.
	\]
	The same argument as in Case~1 shows that $\sgn(\Loop_{i1}) 
	\sgn(\Loop_{i2}) = - \sgn(\Loop_i)$.

	\textbf{Case 3:} \textit{The labels $a$ and~$b$ are on steps of the same 
	labelled loop taken in opposite directions.} In this case the labels $a$ 
	and~$b$ are in a labelled loop~$\lLoop_i$ which has a representation of 
	the form
	\[
		\lLooprep_i
		= (u,a,v) \concat \lPath_1 \concat (v,b,u) \concat \lPath_2.
	\]
	From this we can construct the representation
	\[
		\lLooprep
		= (u,a,v) \concat \smash{(\lPath_1)}^{-1} \concat (v,b,u) \concat 
		\lPath_2
	\]
	of a new labelled loop~$\lLoop$. The bijection maps $\{\lLoop_1, \dots, 
	\lLoop_s\}$ to the labelled loop configuration
	\[
		\{\lLoop_1, \dots, \lLoop_s, \lLoop\} \setminus \{\lLoop_i\}.
	\]

	To verify that these loop configurations have opposite signs, observe that
	\begin{equation}
		\label{eqn:angles1}
		\alpha(\Looprep_i)
		= \alpha\bigl( (u,v) \concat \Path_1 \concat (v,u) \bigr)
		+ \alpha\bigl( (v,u) \concat \Path_2 \concat (u,v) \bigr),
	\end{equation}
	and likewise
	\begin{equation}
		\label{eqn:angles2}
		\alpha(\Looprep)
		= \alpha\bigl( (u,v) \concat \Path_1^{-1} \concat (v,u) \bigr)
		+ \alpha\bigl( (v,u) \concat \Path_2 \concat (u,v) \bigr),
	\end{equation}
	where $\Path_1$ and~$\Path_2$ are the paths obtained from $\lPath_1$ 
	and~$\lPath_2$ by dropping the labels. Now notice that upon reversion,
	\begin{equation}
		\label{eqn:angles3}
		\alpha\bigl( (u,v) \concat \Path_1 \concat (v,u) \bigr)
		= -\alpha\bigl( (u,v) \concat \Path_1^{-1} \concat (v,u) \bigr).
	\end{equation}
	Furthermore, it is not difficult to see that
	\[
		\alpha\bigl( (u,v) \concat \Path_1 \concat (v,u) \bigr)
		 = 2m\pi + \pi \qquad \text{for some $m\in\Z$}.
	\]
	Together with \eqref{eqn:angles1}, \eqref{eqn:angles2} 
	and~\eqref{eqn:angles3}, this implies
	\[
		\frac{\sgn(\Loop_i)}{\sgn(\Loop)}
		= \frac{\sgn(\Looprep_i)}{\sgn(\Looprep)}
		= \exp\Bigl( \frac{i}2 \alpha(\Looprep_i) - \frac{i}2 \alpha(\Looprep) 
		\Bigr)
		= -1.
	\]

	We conclude that in all cases, the labelled loop configuration 
	$\{\lLoop_1, \dots, \lLoop_s\}$ is mapped to a labelled loop configuration 
	of opposite weight. From the explicit descriptions given above, it is not 
	difficult to see that the mapping is bijective. As we have explained 
	above, this implies Proposition~\ref{pro:loopcancellation}.
\end{proof}

\section{Proofs of our results for the Ising model}
\label{sec:isingresults}

In this section, we will apply Theorem~\ref{thm:Zdeterminant} to the Ising 
model on the square lattice~$\Z^2$. This will lead to explicit expressions for 
the free energy density and two-point functions in terms of sums over loops 
in~$\Z^2$ or its dual~$\Z^{2*}$, valid all the way up to the critical point. 
We start with a brief review of the low- and high-temperature expansions in 
Section~\ref{ssec:expansions}. The bound on the operator norm in 
Theorem~\ref{thm:keybound} will be derived in Section~\ref{ssec:eigenvalues}. 
Then we will study the free energy density in Section~\ref{ssec:freeenergy}, 
and finally the two-point functions at low and high temperatures in Sections 
\ref{ssec:lowT} and~\ref{ssec:highT}, respectively.

\subsection{Low- and high-temperature expansions}
\label{ssec:expansions}

The partition function of the Ising model is closely related to the graph 
generating function~$Z(x)$ from Section~\ref{ssec:identities}. This can be 
seen from the \emph{low-} and \emph{high-temperature} expansions considered in 
this section. More details on these expansions and the related duality of the 
Ising model can be found in~\cite{Simon}*{Section~II.7}.

Let $G = (V,E)$ be a finite rectangle in~$\Z^2$. By $G^* = (V^*,E^*)$ we shall 
denote the \emph{weak dual} graph of~$G$, i.e.\ the rectangle in~$\Z^{2*}$ 
whose vertices are the centres of the faces of~$G$ (see 
Figure~\ref{fig:Expansions}, left). For our purposes, the low-temperature 
expansion is best considered in the case of positive boundary conditions. It 
is not difficult to see that in this case, there is a 1--1 correspondence 
between the even subgraphs of the weak dual~$G^*$ and the spin configurations 
in~$\Omega^+$: given $\sigma\in \Omega^+$, one obtains the corresponding even 
subset $F(\sigma)$ of~$E^*$ by including the edge dual to $uv$ in $F(\sigma)$ 
if and only if $\sigma_u \neq \sigma_v$, for every $uv\in E$. See 
Figure~\ref{fig:Expansions} (left) for an illustration.

Note that by this correspondence, if $F\subset E^*$ is even, then every edge 
in~$F$ separates two spins that have opposite sign. This means that adding an 
edge $uv$ to~$F$ decreases $\sigma_u\sigma_v$ from $+1$ to~$-1$, and hence has 
a ``cost'' $\exp(-2\beta)$ in the probability distribution~\eqref{eqn:PIsing}. 
It follows that we can write
\begin{equation}
	\label{eqn:lowTmeasure}
	P^+_{G,\beta}(\sigma)
	= \frac{\exp(\beta\card{E})}{Z^+_{G,\beta}} \prod_{uv\in F(\sigma)} 
	x_{uv}, \qquad \sigma\in \Omega_G^+,
\end{equation}
where $x_{uv} = \exp(-2\beta)$ for every $uv\in E^*$, and
\begin{equation}
	\label{eqn:lowTexpansion}
	Z^+_{G,\beta} = \exp(\beta\card{E}) \sum_{\text{even }F\subset E^*} 
	\prod_{uv\in F} x_{uv}.
\end{equation}

This is the low-temperature expansion of the partition function for positive 
boundary conditions. Observe that up to the factor $\exp(\beta\card{E})$, this 
expansion takes exactly the form~\eqref{eqn:Z} of the graph generating 
function~$Z(x)$ for the dual graph~$G^*$ (in which no edges cross each other), 
if we set the edge weights of all dual edges $uv\in E^*$ equal to $x_{uv} = 
\exp(-2\beta)$.

\begin{figure}
	\begin{center}
		\includegraphics{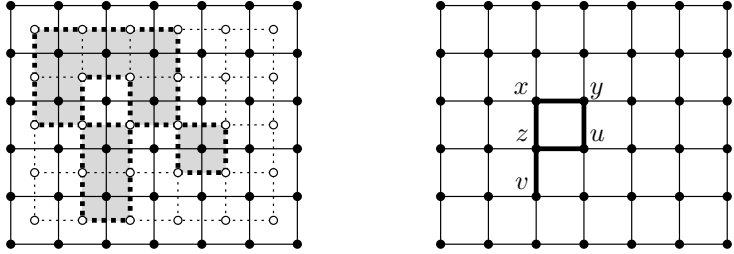}
	\end{center}
	\caption{Left: the graph~$G$ and its weak dual~$G^*$, with an even 
	subgraph of~$G^*$ marked by bold dashed edges. The spins in the gray 
	squares have value~$-1$, the rest have value~$+1$. Right: the subgraph 
	of~$G$ drawn with bold edges contributes $\sigma_x^2 \sigma_y^2 \sigma_z^3 
	\sigma_u^2 \sigma_v (\tanh\beta)^5$ in the high-temperature expansion.}
	\label{fig:Expansions}
\end{figure}

We now turn to the high-temperature expansion, for which we impose free 
boundary conditions. The expansion will be over even subgraphs of the 
graph~$G$, rather than of the dual~$G^*$. Unlike in the low-temperature 
expansion, these subgraphs do not have a clear geometric interpretation, so we 
will take some time to explain how they arise.

The high-temperature expansion starts from equation~\eqref{eqn:ZIsing} and the 
observation that $\sigma_u\sigma_v$ can only take the values $-1$ or~$+1$. 
Since $\exp(\pm\beta) = \cosh\beta \pm \sinh\beta$, we see that
\[
	Z^\free_{G,\beta}
	= (\cosh\beta)^{\card{E}} \sum_{\sigma \in \Omega_G^\free} \prod_{uv\in E} 
	\bigl(1 + \sigma_u\sigma_v \tanh\beta \bigr).
\]
The next step is to expand the product over $uv\in E$. Each term in the 
expansion will be a product of factors obtained by choosing for each edge~$uv$ 
whether $1$~is taken as a factor, or $\sigma_u\sigma_v \tanh\beta$, so that 
the expansion becomes a sum over all choices of factors for each edge~$uv$. We 
can represent each choice graphically by removing the edge~$uv$ if we choose 
the factor~$1$ for this edge, and keeping~$uv$ if we choose the factor 
$\sigma_u\sigma_v \tanh\beta$. This gives a 1--1 correspondence between all 
terms in the expansion, and all $F\subset E$ (not just the even ones). See 
also Figure~\ref{fig:Expansions} (right).

Using this correspondence, and then interchanging the order of summation over 
$\sigma$ and~$F$, we may now write the partition function as
\[
	Z^\free_{G,\beta}
	= (\cosh\beta)^{\card{E}} \sum_{F\subset E} \sum_{\sigma\in\Omega_G^\free} 
	\prod_{u\in V} \sigma_u^{\deg(u,F)} \prod_{uv\in F} x_{uv},
\]
where $x_{uv}=\tanh\beta$ for all $uv\in E$ and $\deg(u,F)$ denotes the degree 
of~$u$ in the graph $(V,F)$. Note that the sum over~$\sigma$ vanishes unless 
$\deg(u,F)$ is even for all $u\in V$, in which case the sum yields simply 
$2^{\card{V}}$. Therefore,
\begin{equation}
	\label{eqn:highTexpansion}
	Z^\free_{G,\beta}
	= 2^{\card{V}} (\cosh\beta)^{\card{E}} \sum_{\text{even }F\subset E} 
	\prod_{uv\in F} x_{uv}.
\end{equation}
Again, up to a multiplicative constant, the expansion takes exactly the 
form~\eqref{eqn:Z} of the graph generating function~$Z(x)$, this time for the 
graph~$G$, if we set the edge weights equal to $x_{uv} = \tanh\beta$.

\subsection{Bound on the operator norm}
\label{ssec:eigenvalues}

Let $G = (V,E)$ be a fixed finite rectangle in~$\Z^2$ with no additional edges 
(i.e.\ $E_A = \emptyset$). Without loss of generality, we may assume that the 
vertex set is
\begin{equation}
	\label{eqn:vertexset}
	V = \{0,1,\dots,M-1\} \times \{0,1,\dots,N-1\}.
\end{equation}
Since we are on the square lattice, directed edges can point in only 
4~directions, and we now introduce some convenient notation for this specific 
case. We write $v\north$, $v\south$, $v\east$ and $v\west$ for the directed 
edges from~$v$ to, respectively, $v+(0,1)$, $v-(0,1)$, $v+(1,0)$ and 
$v-(1,0)$. We also write $\north v$ for the directed edge pointing from 
$v-(0,1)$ to~$v$, and define $\south v$, $\east v$, $\west v$ analogously.

Given a vector of edge weights $x = (x_{uv})_{uv\in E}$ on~$E$, $\Lambda(x)$ 
is the transition matrix indexed by the directed edges of~$G$, defined 
by~\eqref{eqn:Lambda(x)}. For a vertex~$v$ not on the boundary of~$G$, the row 
of~$\Lambda(x)$ indexed by $\east v$, for instance, has exactly~3 nonzero 
entries, corresponding to the 3~possible steps that a loop can take 
from~$\east v$. To be precise, with $u = v-(1,0)$, these 3~entries are
\[
	\Lambda_{\east v,v\east}(x) = x_{uv}, \quad
	\Lambda_{\east v,v\north}(x) = x_{uv} e^{i\pi/4}, \quad
	\Lambda_{\east v,v\south}(x) = x_{uv} e^{-i\pi/4}.
\]
Observe that most rows of~$\Lambda(x)$ have exactly 3~nonzero entries. The 
only exceptions are the rows indexed by directed edges pointing to a vertex 
in~$\boundary{G}$. These exceptional rows make it impossible to compute the 
eigenvalues of~$\Lambda(x)$ directly. We will therefore make the graph 
periodic by connecting opposite sides, as described in 
Section~\ref{ssec:additionaledges}, so that all vertices can be treated alike, 
and then bound the eigenvalues of~$\Lambda(x)$ in terms of those of the 
periodic graph (or equivalently, a graph wrapped on a torus).

To be precise, we first extend our graph~$G$ to a graph~$G^\torus$ ($\torus$ 
stands for ``torus''), by adding edges and vertices as shown in 
Figure~\ref{fig:Torus} (left). Note that this adds directed 
\emph{representative} edges $v\east$ and $\west v$ for every vertex~$v$ on the 
right boundary of~$G$, and $v\north$  and $\south v$ for~$v$ on the top 
boundary. All other edges that are added are considered as \emph{additional} 
edges in the graph~$G^\torus$. Henceforth, when we work on the 
graph~$G^\torus$, computations will be performed modulo $M$ and~$N$ in the two 
respective lattice directions.

We define~$\Lambda^\torus$ as the transition matrix for the graph~$G^\torus$, 
with specific edge weights chosen as follows: all representative edges 
of~$G^\torus$ have edge weight~$1$; for the additional edges, we choose the 
edge weights in such a way, that the product of the edge weights along every 
chain of additional edges linking opposite sides of the rectangle to each 
other is~$-1$. Note that by this choice, the factor~$-1$ will exactly 
compensate the sign picked up by a path which follows the chain, because of 
the 4~quarter-turns it makes.

\begin{proof}[Proof of Theorem~\ref{thm:keybound}]
	We first prove that the operator norm of the matrix~$\Lambda^\torus$ is 
	$\sqrt2+1$. To this end, assume that the rows of~$\Lambda^\torus$ are 
	arranged in such a way, that for every vertex $v\in V$, the 4~rows indexed 
	by $\east v$, $\north v$, $\west v$ and~$\south v$ immediately succeed 
	each other in this order. Let $\Pi$ be the permutation matrix which 
	permutes the columns of~$\Lambda^\torus$ so that column~$vd$ maps to 
	column~$dv$, for all $v\in V$ and $d \in \{\north, \south, \east, 
	\west\}$.

	By construction, the matrix $\Lambda^\torus \Pi$ with the permuted columns 
	is now a block-diagonal matrix, since the 4~rows indexed by the directed 
	edges pointing to~$v$ are matched along the diagonal with the 4~columns 
	indexed by the directed edges pointing out from~$v$. By considering the 
	turning angles, it is easy to see that each $4\times4$ block is equal to 
	the Hermitian matrix
	\[
		A = \begin{bmatrix}
			1 & \exp(i\pi/4) & 0 & \exp(-i\pi/4) \\
			\exp(-i\pi/4) & 1 & \exp(i\pi/4) & 0 \\
			0 & \exp(-i\pi/4) & 1 & \exp(i\pi/4) \\
			\exp(i\pi/4) & 0 & \exp(-i\pi/4) & 1
		\end{bmatrix},
	\]
	which has eigenvalues $\sqrt2+1$ and $\sqrt2-1$, both of multiplicity~2.

	Since~$A$ is Hermitian, its spectral radius is equal to its operator 
	norm~$\norm{A}$. It follows that the operator norm of $\Lambda^\torus \Pi$ 
	is given by $\norm{A} = \sqrt2+1$, and since permuting columns does not 
	change the operator norm of a matrix, we conclude that 
	$\norm{\Lambda^\torus} = \sqrt2+1$.

	We will now use this fact, together with the sub-multiplicativity of the 
	operator norm, to bound~$\norm{ \Lambda(x) }$. To this end, let $D(x)$ be 
	the diagonal matrix of the same dimensions as~$\Lambda^\torus$, defined as 
	follows. For vertices~$v$ on the right boundary of~$G$, the diagonal 
	entries of~$D(x)$ on the rows $v\east$ and~$\west v$ are~$0$, and so are 
	the diagonal entries on the rows $v\north$ and~$\south v$ for $v$ on the 
	top boundary of~$G$. For all other directed edges $\dir{uv}$ in the 
	graph~$G^\torus$, the diagonal entry of~$D(x)$ on row~$\dir{uv}$ is equal 
	to the edge weight~$x_{uv}$.

	Now consider the matrix $D(x) \Lambda^\torus D(1)$, where~$1$ denotes the 
	edge weight vector on~$G^\torus$ with constant weight~$1$ on every edge. 
	The multiplication by~$D(x)$ multiplies all rows of~$\Lambda^\torus$ 
	corresponding to directed edges~$\dir{uv}$ in the graph~$G$ by~$x_{uv}$, 
	and zeroes out all rows corresponding to directed edges which are in the 
	graph~$G^\torus$, but not in the graph~$G$. The multiplication by~$D(1)$ 
	then zeroes out all columns of~$\Lambda^\torus$ corresponding to directed 
	edges which are in~$G^\torus$ but not in~$G$. In other words, $D(x) 
	\Lambda^\torus D(1)$ is just the matrix $\Lambda(x)$ with rows and columns 
	of zeros added to it for every directed edge which is in~$G^\torus$ but 
	not in~$G$. Therefore,
	\[
		\norm{\Lambda(x)}
		= \norm{D(x) \Lambda^\torus D(1)}
		\leq \norm{D(x)} \cdot \norm{\Lambda^\torus} \cdot \norm{D(1)}
		= (\sqrt2+1) \supnorm{x}.
	\]
	The desired bound on $\abs{f_r(x)}$ now follows 
	from~\eqref{eqn:feigenvals} and the facts that $\rho(x) \leq \norm{ 
	\Lambda(x) }$ and the number of directed edges in~$G$ is bounded 
	by~$4\card{V}$.
\end{proof}

\begin{remark}
	Using Fourier transforms, we can actually compute all eigenvalues 
	of~$\Lambda^\torus$, and show that its spectral radius is $\sqrt2+1$. 
	Also, with some extra effort, it is possible to show that for all finite 
	rectangles, the spectral radius of~$\Lambda(x)$ is \emph{strictly} less 
	than $(\sqrt2+1) \supnorm{x}$.
\end{remark}

\subsection{Free energy density}
\label{ssec:freeenergy}

We are now going to use the bound obtained in Theorem~\ref{thm:keybound} to 
prove Theorem~\ref{thm:analyticity} and it's Corollary~\ref{cor:Onsager}.

\begin{proof}[Proof of Theorem~\ref{thm:analyticity}]
	We start with the high-temperature case, so fix $\beta\in (0,\beta_c)$ and 
	set $x = \tanh\beta$. Let $G$ be a rectangle in~$\Z^2$, and take the set 
	of additional edges to be empty. Note that by~\eqref{eqn:beta_c}, 
	$x\in(0,\sqrt2-1)$. By~\eqref{eqn:highTexpansion},
	\begin{equation}
		\label{eqn:ZfreeZx}
		\ln Z^\free_{G,\beta}
		= \card{V}\ln2 + \card{E} \ln(\cosh\beta) + \ln Z_G(x),
	\end{equation}
	where $Z_G(x)$ is the generating function for the graph~$G$ with edge 
	weights equal to~$x$. By Theorems \ref{thm:Zdeterminant} 
	and~\ref{thm:keybound}, $\ln Z_G(x)$ equals $\sum_r f_{G,r}(x)$, where 
	$f_{G,r}(x)$ denotes the sum of the weights of all loops of length~$r$ 
	in~$G$.

	Consider these loops of length~$r$ in~$G$. For each vertex $v\in V$, let 
	$\Loops^v_r(G)$ denote the collection of loops in~$G$ of length~$r$ for 
	which $v$ is the smallest vertex traversed. Observe that if $v$ has 
	distance at least~$r$ to the boundary of~$G$, then $\Loops^v_r(G)$ can be 
	mapped bijectively to~$\Loops^\circ_r (\Z^2)$ by a translation on~$\Z^2$, 
	hence $\sum_{\Loop\in \Loops^v_r(G)} w(\Loop;x) = f^\circ_r(x)$. There are 
	at most $\card{\boundary{G}} r$ vertices at a distance less than~$r$ from 
	$\boundary{G}$, and since $\abs{x}<1$, for such a vertex~$v$ we have 
	$\sum_{\Loop\in \Loops^v_r(G)} \abs{w(\Loop;x)} \leq 3^r$ (by counting 
	non-backtracking paths). From these observations and the fact that 
	$\lim_{G\to\Z^2} \card{\boundary{G}} / \card{V} = 0$, it follows that
	\[
		\lim_{G\to\Z^2} \frac1{\card{V}} f_{G,r}(x)
		= \lim_{G\to\Z^2} \frac1{\card{V}} \sum_{v\in V} \sum_{\Loop\in 
		\Loops^v_r(G)} w(\Loop;x)
		= f^\circ_r(x) \quad \text{for all $r\geq1$}.
	\]
	Furthermore, Theorem~\ref{thm:keybound} says that $\abs{f_{G,r}(x)} \leq 
	2\card{V}r^{-1} (\sqrt2+1)^r x^r$. Therefore, by dominated convergence and 
	Theorem~\ref{thm:Zdeterminant},
	\[
		\lim_{G\to\Z^2} \frac1{\card{V}} \ln Z_G(x)
		= \lim_{G\to\Z^2} \sum_{r=1}^\infty \frac1{\card{V}} f_{G,r}(x)
		= \sum_{r=1}^\infty f^\circ_r(x).
	\]

	We now combine this with~\eqref{eqn:ZfreeZx}, and use $\lim_{G\to\Z^2} 
	\card{E} / \card{V} = 2$ to obtain
	\[
		-\beta f(\beta)
		= \lim_{G\to\Z^2} \frac1{\card{V}} \ln Z^\free_{G,\beta}
		= \ln(2\cosh^2\beta) + \sum_{r=1}^\infty f^\circ_r(x).
	\]
	The low-temperature case can be treated in a similar manner, except that 
	one must work on the dual graphs~$G^*$ with edge weights $x = 
	\exp(-2\beta)$ on the dual edges, and use \eqref{eqn:lowTexpansion} 
	instead of~\eqref{eqn:highTexpansion}.
\end{proof}

\begin{proof}[Proof of Corollary~\ref{cor:Onsager}]
	We will now show that Onsager's formula follows from the expressions 
	for~$f(\beta)$ derived above. First, we claim that with $x=\tanh\beta$ for 
	$\beta\in (0,\beta_c)$ and $x=\exp(-2\beta)$ for $\beta\in 
	(\beta_c,\infty)$, we have
	\begin{equation}
		\label{eqn:freeenergy2}
		-\beta f(\beta) = \ln\left[ \frac{2\cosh2\beta}{1+x^2} \right] + 
		\sum_{r=1}^\infty f^\circ_r(x)
	\end{equation}
	for all these~$\beta$. This follows from \eqref{eqn:freeenergy} and the 
	equality $\cosh^2\beta + \sinh^2\beta = \cosh^2\beta(1+x^2) = \cosh2\beta$ 
	for $\beta\in (0,\beta_c)$, and from \eqref{eqn:freeenergy} together with 
	the logarithm of the equality $2\cosh2\beta = e^{2\beta}(1+x^2)$ for 
	$\beta\in (\beta_c,\infty)$.

	In the proof of Theorem~\ref{thm:analyticity}, we have obtained $\sum_r 
	f^\circ_r(x)$ as the limit of $\card{V}^{-1} \sum_r f_{G,r}(x)$. It is 
	clear from the proof that here we may as well replace $f_{G,r}(x)$ by the 
	corresponding sum of loop weights for the periodic graph~$G^\torus$ from 
	Section~\ref{ssec:eigenvalues}. In fact, the argument becomes even simpler 
	on~$G^\torus$, since we no longer have to treat vertices near the boundary 
	separately. The transition matrix generating the loops in~$G^\torus$ with 
	the desired edge weights~$x$ is~$x\Lambda^\torus$. Hence, by 
	Theorem~\ref{thm:Zdeterminant} and the proof of Theorem~\ref{thm:keybound} 
	in Section~\ref{ssec:eigenvalues}, which gives $\norm{ x\Lambda^\torus } 
	\leq x(\sqrt2+1)$, we see that for $x\in (0,\sqrt2-1)$,
	\begin{equation}
		\label{eqn:computingf}
		\sum_{r=1}^\infty f^\circ_r(x)
		= \lim_{G\to\Z^2} \frac1{\card{V}} \sum_{r=1}^\infty f_{G^\torus,r}(x)
		= \lim_{G\to\Z^2} \frac1{\card{V}} \frac12 
		\ln\det(\Id-x\Lambda^\torus).
	\end{equation}

	We can compute $\det(\Id-x\Lambda^\torus)$ by taking the Fourier transform 
	of~$\Lambda^\torus$. This computation has appeared in the literature 
	before, see for instance~\cites{Glasser, Sherman1, Vdovichenko1}, but we 
	also present it in brief form here for completeness.
	
	Without loss of generality, we may assume that~$V$ is the 
	set~\eqref{eqn:vertexset}, in which case the Fourier transform 
	of~$\Lambda^\torus$ is defined as
	\[
		\widetilde\Lambda^\torus_{(p,q)d,(p',q')d'}
		= \frac1{MN} \sum_{k,k'=0}^{M-1} \sum_{l,l'=0}^{N-1}
		e^{-\frac{2\pi i}M(pk-p'k')-\frac{2\pi i}N(ql-q'l')}
		\Lambda^\torus_{(k,l)d,(k',l')d'},
	\]
	where $d,d'\in \{\north, \south, \east, \west\}$. The calculation of this 
	Fourier transform is made straightforward by the periodicity 
	of~$\Lambda^\torus$, and reveals that the only entries surviving the 
	summations are those for which $p'=p$ and $q'=q$. Hence, 
	$\widetilde\Lambda^\torus$ is a block-diagonal matrix of $4\times4$ 
	blocks. To be precise, writing $\omega_p = 2\pi p/M$, $\omega_q = 2\pi 
	q/N$, the $4\times4$ block for given $p$ and~$q$ is
	\[
		\widetilde\Lambda^\torus_{(p,q)\cdot,(p,q)\cdot}
		= \begin{bmatrix}
			e^{i\omega_p}& e^{i\omega_p+i\pi/4}& 0& e^{i\omega_p-i\pi/4} \\
			e^{i\omega_q-i\pi/4}& e^{i\omega_q}& e^{i\omega_q+i\pi/4}& 0 \\
			0& e^{-i\omega_p-i\pi/4}& e^{-i\omega_p}& e^{-i\omega_p+i\pi/4} \\
			e^{-i\omega_q+i\pi/4}& 0& e^{-i\omega_q-i\pi/4}& e^{-i\omega_q}
		\end{bmatrix}.
	\]

	Since $\det(\Id-x\Lambda^\torus) = \det(\Id-x\widetilde\Lambda^\torus)$, 
	from this Fourier transform we obtain
	\[\begin{split}
		\det(\Id-x\Lambda^\torus)
		&= \prod_{p=0}^{M-1} \prod_{q=0}^{N-1} \det\Bigl( 
		\Id-x\widetilde\Lambda^\torus_{(p,q)\cdot,(p,q)\cdot} \Bigr) \\
		&= \prod_{p=0}^{M-1} \prod_{q=0}^{N-1} \bigl[ (1+x^2)^2 - 2x(1-x^2) 
		(\cos\omega_p + \cos\omega_q) \bigr].
	\end{split}\]
	Using~\eqref{eqn:computingf}, we conclude that
	\begin{multline}
		\label{eqn:sumfcirc}
		\sum_{r=1}^\infty f^\circ_r(x)
		= \lim_{M,N\to\infty} \frac1{2MN} \ln \det(\Id-x\Lambda^\torus) \\
		= \frac1{8\pi^2} \int_0^{2\pi} \!\! \int_0^{2\pi} \ln \bigl[ (1+x^2)^2 
		- 2x(1-x^2) (\cos\omega_1 + \cos\omega_2) \bigr]\, 
		d\omega_1\,d\omega_2.
	\end{multline}
	To finish the computation, note that by~\eqref{eqn:freeenergy2}, we have
	\[
		-\beta f(\beta) = \frac1{8\pi^2} \int_0^{2\pi} \!\! \int_0^{2\pi} 
		\ln\left[ \frac{4\cosh^2 2\beta}{(1+x^2)^2} \right] \, d\omega_1\, 
		d\omega_2 + \sum_{r=1}^\infty f^\circ_r(x).
	\]
	Combining this with~\eqref{eqn:sumfcirc}, and then using the identity
	\[
		\frac{2x(1-x^2)}{(1+x^2)^2} = \frac{\sinh2\beta}{\cosh^22\beta},
	\]
	which holds both for $x = \exp(-2\beta)$ and for $x = \tanh\beta$, we 
	obtain
	\[
		-\beta f(\beta)
		= \frac1{8\pi^2} \int_0^{2\pi} \!\! \int_0^{2\pi} \ln \bigl[ 
		4\cosh^22\beta - 4\sinh2\beta (\cos\omega_1 + \cos\omega_2) \bigr] 
		\,d\omega_1\,d\omega_2.
	\]
	This is Onsager's formula for the isotropic Ising model on~$\Z^2$.
\end{proof}

\subsection{Low-temperature correlations}
\label{ssec:lowT}

In this section, we discuss the Ising model with positive boundary conditions. 
We consider rectangles $G = (V,E)$ in~$\Z^2$ (which later tend to~$\Z^2$) and 
denote by~$G^*$ the weak dual of~$G$. Recall that every spin configuration 
$\sigma\in \Omega_G^+$ on~$G$ corresponds bijectively to an even subgraph 
of~$G^*$, that is, a graph in which all vertices in~$V^*$ have even degree. 
For given~$\sigma$, we denote the corresponding even subset of~$E^*$ 
by~$F(\sigma)$; for given even $F\subset E^*$, we denote the corresponding 
spin configuration by~$\sigma(F)$.

Setting $x_e=e^{-2\beta}$ for every edge~$e$ in~$\Z^{2*}$, by 
\eqref{eqn:lowTmeasure} and~\eqref{eqn:lowTexpansion} we have
\begin{equation}
	\label{eqn:PlowT}
	P_{G,\beta}^+(\sigma)
	= \frac1{Z_{G^*}(x)} \prod_{e\in F(\sigma)} x_e,
	\qquad \sigma\in \Omega_G^+,
\end{equation}
where $Z_{G^*}(x)$ is the generating function for~$G^*$ with edge weight 
vector $x = (x_e)_{e\in E^*}$. Note that here we implicitly restrict the edge 
weight vector~$x$ on~$\Z^{2*}$ to the edges of the graph~$G^*$ we work on. 
Such implicit restrictions to the relevant edges will occur throughout this 
and the following section.

\begin{proof}[Proof of Theorem~\ref{thm:lowTcorr}]
	Fix $u,v \in\Z^2$, $u\neq v$, and let $\gamma$ be a self-avoiding path 
	in~$\Z^2$ from $u$ to~$v$. We may assume that $G$ is large enough so that 
	$u$, $v$ and~$\gamma$ are all contained in the area spanned by~$G$, see 
	Figure~\ref{fig:2PointGammas}. We will express the two-point function 
	$\ave{\sigma_u \sigma_v}^+_{G, \beta}$ as the quotient of two generating 
	functions. To this end, we define new edge weights~$x'_e$ on the edges 
	of~$\Z^{2*}$ such that $x'_e = -x_e$ if $e$ crosses~$\gamma$, and $x_e' = 
	x_e$ otherwise. The reason for defining the weights~$x_e'$ in this way is 
	the crucial fact that for all $\sigma\in \Omega_G^+$,
	\begin{equation}
		\label{eqn:lowTwhygamma}
		\sigma_u \sigma_v \prod_{e \in F(\sigma)} x_e
		= \prod_{e \in F(\sigma)} x_e'.
	\end{equation}
	To see this, recall that the edges in~$F(\sigma)$, by their very 
	definition, cross edges $xy\in E$ for which $\sigma_x\neq \sigma_y$. If 
	$\sigma_u = \sigma_v$, then following~$\gamma$ from $u$ to~$v$, we 
	necessarily cross an even number of such edges. If $\sigma_u\neq 
	\sigma_v$, then we cross an odd number. In either case, 
	\eqref{eqn:lowTwhygamma} holds. 

	With the help of~\eqref{eqn:PlowT}, we can write
	\[
		\ave{\sigma_u\sigma_v}^+_{G,\beta}
		= \sum_{\sigma\in \Omega_G^+} \sigma_u\sigma_v P^+_{G,\beta}(\sigma)
		= \frac1{Z_{G^*}(x)} \sum_{\sigma\in \Omega_G^+} \sigma_u\sigma_v  
		\prod_{e\in F(\sigma)} x_e,
	\]
	and using \eqref{eqn:lowTwhygamma} and the bijection between even~$F$ 
	and~$\Omega_G^+$, we obtain
	\begin{equation}
		\label{eqn:lowTaveRatio}
		\ave{\sigma_u\sigma_v}^+_{G,\beta}
		= \frac1{Z_{G^*}(x)} \sum_{\text{even }F\subset E^*} \prod_{e\in F} 
		x_e'
		= \frac{Z_{G^*}(x')}{Z_{G^*}(x)}.
	\end{equation}
	The idea that correlations in the Ising model can be studied by means of 
	ratios of generating functions with changed edge weights (or equivalently, 
	changed spin-spin interactions) has arisen before in the physics 
	literature, see~\cite{KadCeva}. It now follows from Theorems 
	\ref{thm:Zdeterminant} and~\ref{thm:keybound} that for $\beta > \beta_c$, 
	we have
	\[
		\ave{ \sigma_u \sigma_v }^+_{G,\beta}
		= \exp\biggl( \sum_{r=1}^\infty \sum_{\Loop\in \Loops_r(G^*)}
		\bigl[ w(\Loop; x')-w(\Loop; x) \bigr] \biggr),
	\]
	where $\Loops_r(G^*)$ is the collection of loops of length~$r$ in the 
	graph~$G^*$.

	Recall that we call a loop in~$G^*$ \emph{$uv$-odd} if it crosses~$\gamma$ 
	an odd number of times. Observe that for $uv$-odd loops~$\Loop$, $w(\Loop; 
	x') = -w(\Loop; x)$, while for loops~$\Loop$ that are not $uv$-odd, 
	$w(\Loop; x') = w(\Loop; x)$. It follows that
	\begin{equation}
		\label{eqn:lowTaveLoops}
		\ave{ \sigma_u \sigma_v }^+_{G,\beta}
		= \exp\biggl( -2\sum_{r=1}^\infty \sum_{\Loop\in \Loops^{uv}_r(G^*)} 
		w(\Loop; x)\biggr),
	\end{equation}
	where $\Loops^{uv}_r(G^*)$ is the collection of $uv$-odd loops of 
	length~$r$ in the graph~$G^*$.

	Note that a $uv$-odd loop of length~$r$ cannot travel far from $u$ 
	and~$v$. To be precise, these loops must be contained in $B^u_r \cup 
	B^v_r$, where $B^u_r$ is a square in the plane of side length~$r$ centred 
	at~$u$, and $B^v_r$ is defined similarly. To study the convergence 
	of~\eqref{eqn:lowTaveLoops} as $G\to\Z^2$, for arbitrary rectangles~$R$ 
	in~$\Z^2$ that can be finite or infinite, and even equal to~$\Z^2$, we now 
	define
	\[
		a_r(R^*;x) := \sum_{\Loop\in \Loops^{uv}_r(R^*)} w (\Loop; x).
	\]
	This definition makes sense both for finite and infinite~$R$, since the 
	loops contributing to the sum must be contained in $B^u_r \cup B^v_r$.
	
	Let $B^{uv}_r$ denote the smallest rectangle in~$\R^2$ containing both 
	$B^u_r$ and~$B^v_r$, and write $R^*\cap B^{uv}_r$ for the largest subgraph 
	of~$R^*$ which is a rectangle in~$\Z^{2*}$ entirely contained 
	in~$B^{uv}_r$. Then for all~$R$,
	\begin{equation}
		\label{eqn:arconv}
		a_r(R^*;x)
		= a_r(R^*\cap B^{uv}_r;x)
		= \frac12 \sum_{\Loop\in \Loops_r(R^*\cap B^{uv}_r)} \bigl[ w(\Loop; 
		x) - w(\Loop; x') \bigr].
	\end{equation}
	Now, since the volume of~$B^{uv}_r$ is bounded from above by 
	$(\norm{u-v}+r)^2$, and $\exp(2\beta_c) = \sqrt2+1$ by~\eqref{eqn:beta_c}, 
	Theorem~\ref{thm:keybound} yields the uniform bound
	\begin{equation}
		\label{eqn:arbound}
		\abs{a_r(R^*;x)}
		\leq 2(\norm{u-v}+r)^2 r^{-1} \exp\bigl( -2(\beta-\beta_c)r \bigr)
		\qquad \text{for all~$R$}.
	\end{equation}

	We now return to~\eqref{eqn:lowTaveLoops}. Since eventually, $G^*\cap 
	B^{uv}_r = \Z^{2*}\cap B^{uv}_r$ when $G\to\Z^2$, from~\eqref{eqn:arconv} 
	we conclude that
	\[
		a_r(G^*; x) \to a_r\bigl( \Z^{2*}; x \bigr)
		\qquad \text{for all $r\geq1$}.
	\]
	Moreover, the $a_r(G^*; x)$ are uniformly bounded in~$G$ by the right-hand 
	side of~\eqref{eqn:arbound}, which is summable over~$r$. Therefore, by 
	dominated convergence,
	\[
		\lim_{G\to\Z^2} \sum_{r=1}^\infty a_r(G^*; x)
		= \sum_{r=1}^\infty a_r\bigl( \Z^{2*}; x \bigr),
	\]
	where the series on the right is absolutely summable. 
	Using~\eqref{eqn:lowTaveLoops}, this proves the convergence of~$\ave{ 
	\sigma_u\sigma_v }^+_{G,\beta}$ in Theorem~\ref{thm:lowTcorr}.

	Next, we consider $\ave{ \sigma_u }^+_{G,\beta}$ for $u\in G\setminus 
	\boundary{G}$. We can treat this like $\ave{ \sigma_u\sigma_v }^+_{G, 
	\beta}$ by taking $v$ on the boundary of~$G$, since then $\sigma_v = +1$. 
	We now call a loop which crosses~$\gamma$ an odd number of times 
	\emph{$u$-odd}, since this depends only on~$u$, not~$v$. The 
	box~$B^{uv}_r$ can be replaced by~$B^u_r$ in the argument, which replaces 
	$(\norm{u-v}+r)^2$ by~$r^2$ in~\eqref{eqn:arbound}. This completes the 
	proof of Theorem~\ref{thm:lowTcorr}.
\end{proof}

\begin{proof}[Proof of Corollary~\ref{cor:lowTcorr}]
	The limit $\ave{ \sigma_u }^+_{\Z^2,\beta}$ in Theorem~\ref{thm:lowTcorr} 
	is easily seen to be independent of the choice of~$u$, and we take $u=o$ 
	as the canonical choice. We now consider what happens to the two-point 
	function when we take $u$ and~$v$ further and further apart. When $ r < 
	\norm{u-v} / 2$, the boxes $B_r^u$ and~$B_r^v$ in the proof of 
	Theorem~\ref{thm:lowTcorr} above are disjoint. If this is the case, a 
	$uv$-odd loop of length~$r$ in~$\Z^{2*}$ must be contained in either 
	$B_r^u$ or~$B_r^v$. Hence,
	\begin{multline*}
		\ave{ \sigma_u\sigma_v }^+_{\Z^2,\beta}
		= \exp\biggl( -2\sum_{r\geq\norm{u-v}/2} a_r\bigl( \Z^{2*};x \bigr) 
		\biggr) \\
		\times \exp\biggl( -2\sum_{r<\norm{u-v}/2} \bigl[ a_r(\Z^{2*}\cap 
		B^u_r;x) + a_r\bigl( \Z^{2*}\cap B^v_r; x \bigr) \bigr] \biggr).
	\end{multline*}
	When $\norm{u-v}\to \infty$, the first factor converges to~$1$ 
	exponentially fast, since the uniform bound in~\eqref{eqn:arbound} applies 
	to~$a_r\bigl( \Z^{2*}; x \bigr)$. In the second factor, the first term in 
	the sum is a sum over the $u$-odd loops of length~$r$, and the second term 
	is a sum over the $v$-odd loops. Hence the second factor factorizes and 
	converges (exponentially fast) to $[\ave{\sigma_o}^+_{\Z^2,\beta}]^2$.
\end{proof}

\subsection{High-temperature correlations}
\label{ssec:highT}

In this section, we discuss the Ising model on rectangles $G = (V,E)$ 
in~$\Z^2$ (which will again tend to~$\Z^2$) with free boundary conditions. 
From the definitions \eqref{eqn:PIsing}, \eqref{eqn:ZIsing} 
and~\eqref{eqn:aveIsing}, we have
\[
	\ave{\sigma_u \sigma_v}^{\free}_{G, \beta}
	= \sum_{\sigma\in \Omega_G^\free} \hspace{-.5em} \sigma_u \sigma_v 
	P^\free_{G,\beta}(\sigma)
	= \frac1{Z^\free_{G,\beta}} \sum_{\sigma\in \Omega_G^\free} \hspace{-.5em} 
	\sigma_u\sigma_v \prod_{xy\in E} e^{\beta \sigma_x\sigma_y}.
\]
Performing the high-temperature expansion on the right-hand side of this 
expression, in the way explained in Section~\ref{ssec:expansions}, leads to
\[
	\ave{\sigma_u \sigma_v}^{\free}_{G, \beta}
	= \frac{2^{\card{V}}(\cosh\beta)^{\card{E}}}{Z^\free_{G,\beta}} 
	\sum_{\substack{F\subset E\colon\\ \oddset{F} = \{u,v\}}} \prod_{e \in F} 
	x_e,
\]
where $x_e = \tanh\beta$ for every edge~$e$ in~$\Z^2$, and $\oddset{F}$ 
denotes the set of all vertices that have odd degree in~$(V,F)$. 
Using~\eqref{eqn:highTexpansion}, we conclude that
\begin{equation}
	\label{eqn:highTave}
	\ave{\sigma_u\sigma_v}^\free_{G, \beta}
	= \frac1{Z_G(x)} \sum_{\substack{F\subset E\colon\\ \oddset{F} = \{u,v\}}} 
	\prod_{e\in F} x_e,
\end{equation}
where $Z_G(x)$ is the graph generating function for the graph~$G$ with edge 
weight vector $x = (x_e)_{e\in E}$.

\begin{proof}[Proof of Theorem~\ref{thm:highTcorr}]
	Fix $u,v\in \Z^2$, $u\neq v$, and recall the definitions of $u^*$, $v^*$, 
	the path~$\gamma$ and the additional edges~$E_\gamma$ and 
	vertices~$V_\gamma$ from Section~\ref{ssec:isingmodel} (see 
	Figure~\ref{fig:2PointGammas}, right). For an arbitrary rectangle~$R$ 
	in~$\Z^2$ (either finite or infinite) containing $u$ and~$v$, we denote 
	by~$R_\gamma$ the graph obtained from~$R$ by adding all vertices 
	in~$V_\gamma$ to its vertex set, and all edges in~$E_\gamma$ to its edge 
	set. In~$R_\gamma$, all edges added from the set~$E_\gamma$ are considered 
	as additional, and the edges from~$R$ are considered as representative.
	
	As in the low-temperature case, we now define weights~$x'_e$ on the edge 
	set of~$\Z^2$ such that $x_e' = -x_e$ if $e$ crosses~$\gamma$, and $x_e' = 
	x_e$ otherwise. We also define edge weights $x'_\gamma(t)_e$ on the edge 
	set of~$\Z^2_\gamma$, as follows:
	\[
		x'_\gamma(t)_e = \begin{cases}
			x'_e	& \text{if $e$ is an edge of~$\Z^2$}; \\
			1		& \text{if $e \in E_\gamma\setminus \{uu^*\}$}; \\
			t		& \text{if $e = uu^*$}.
		\end{cases}
	\]
	To motivate this definition, consider a given rectangle $G = (V,E)$ 
	in~$\Z^2$, large enough so that $u,v\in V$. We claim that
	\begin{equation}
		\label{eqn:highTclaim}
		\sum_{\substack{F\subset E\colon\\ \oddset{F} = \{u,v\}}} \prod_{e\in 
		F} x_e
		= \sum_{\substack{\text{even }F\subset E\cup E_\gamma\colon \\ 
		F\supset E_\gamma }} (-1)^{C_{F}} \prod_{e\in F} x'_\gamma(1)_e.
	\end{equation}
	To see this, note that we can bijectively map every~$F$ contributing to 
	the first sum to a subgraph in the second sum, by taking the union $F\cup 
	E_\gamma$. Doing this may introduce edge crossings, whence the factor 
	$(-1)^{C_{F}}$, but these are compensated by switching from the edge 
	weight vector~$x$ to~$x_\gamma'(1)$.
	
	The crucial step is now to recognize the last expression as the derivative 
	of a graph generating function. Indeed, a simple consideration shows that
	\[
		\sum_{\substack{\text{even }F\subset E\cup E_\gamma\colon \\ F \supset 
		E_\gamma }} (-1)^{C_F} \prod_{e\in F} x'_\gamma(1)_e
		= \frac{\partial}{\partial t} \biggl( \sum_{\text{even } F\subset 
		E\cup E_\gamma} (-1)^{C_F} \prod_{e\in F} x'_\gamma(t)_e \biggr),
	\]
	evaluated at any~$t$, since any even $F\subset E_\gamma$ contributes at 
	most one factor~$t$ to the product of edge weights on the right. In 
	particular, we are allowed to evaluate the derivative at $t=0$. By 
	\eqref{eqn:highTave} and~\eqref{eqn:highTclaim}, this establishes that
	\begin{equation}
		\label{eqn:highTaveDeriv}
		 Z_G(x) \cdot \ave{\sigma_u \sigma_v}^\free_{G,\beta}
		 = \frac{\partial}{\partial t} Z_{G_\gamma}\bigl( x_\gamma'(t) \bigr) 
		 \Big|_{t=0}.
	\end{equation}

	We now fix $\beta\in (0,\beta_c)$, so that by~\eqref{eqn:beta_c}, $x_e\in 
	(0,\sqrt2-1)$ for every $e\in E$, and by Theorem~\ref{thm:keybound}, the 
	spectral radius of~$\Lambda(x')$ is strictly less than~$1$. We now need a 
	similar bound on the spectral radius of the matrix~$\Lambda_\gamma\bigl( 
	x'_\gamma(t) \bigr)$, which is the transition matrix for the modified 
	graph~$G_\gamma$ with edge weight vector~$x'_\gamma(t)$. The difference 
	between these two matrices is that $\Lambda_\gamma\bigl( x'_\gamma(t) 
	\bigr)$ allows transitions between $u$ and~$v$ along the chain of 
	additional edges in~$E_\gamma$. This means that the 32~matrix entries from 
	$du$ to~$vd'$ and from $dv$ to~$ud'$, with $d,d'\in \{\north, \south, 
	\east, \west\}$, are nonzero in~$\Lambda_\gamma \bigl( x'_\gamma(t) 
	\bigr)$ for $t\neq0$, while they are~$0$ in~$\Lambda(x')$; all other 
	entries of the two matrices are the same.

	The 32~deviating matrix entries are all of the form $te^{i\phi/2}$, where 
	$\phi$ is a sum of turning angles. Here, $t$ will be treated as a complex 
	variable. For $t=0$, $\Lambda_\gamma\bigl( x'_\gamma(t) \bigr) = 
	\Lambda(x')$. Since the eigenvalues vary continuously with~$t$, we 
	conclude that there exists $\eps>0$ such that for all~$t$ satisfying 
	$\abs{t} < \eps$, the spectral radius of $\Lambda_\gamma\bigl( 
	x'_\gamma(t) \bigr)$ is bounded from above by some $\alpha\in (0,1)$.
	
	Hence, if $\abs{t} < \eps$, Theorem~\ref{thm:Zdeterminant} applies, and we 
	obtain
	\[
		Z_{G_\gamma}\bigl( x_\gamma'(t) \bigr)
		= \exp\biggl( \sum_{r=1}^\infty f_{\gamma r}(t) \biggr),
	\]
	where
	\[
		f_{\gamma r}(t) = \sum_{\Loop\in \Loops_r(G_\gamma)} w\bigl( \Loop; 
		x_\gamma'(t) \bigr).
	\]
	Note that, this last sum being finite, the $f_{\gamma r}(t)$ are 
	polynomials in~$t$. Also, from~\eqref{eqn:feigenvals} it follows that 
	$\abs{f_{\gamma r}(t)} \leq 2\card{V} \alpha^r$. Therefore, the partial 
	sums of the series $\sum_r f_{\gamma r}(t)$ are uniformly convergent for 
	$|t|<\eps$, and the sum of the series is an analytic function of~$t$. 
	Moreover, the derivatives of the partial sums also converge uniformly to 
	the derivative of the sum of the series.

	From all this, it follows that the right-hand side 
	of~\eqref{eqn:highTaveDeriv} is equal to
	\[
		\Biggl( \sum_{r=1}^\infty \frac{\partial}{\partial t} \sum_{\Loop\in 
		\Loops_r(G_\gamma)} w\bigl( \Loop; x_{\gamma}'(t) \bigr) \Big|_{t=0} 
		\Biggr) \exp\Biggl( \sum_{r=1}^\infty \sum_{\Loop\in 
		\Loops_r(G_\gamma)} w\bigl( \Loop; x_{\gamma}'(0) \bigr) \Biggr).
	\]
	In the first factor, the only loops that survive the differentiation are 
	those that visit the edge~$uu^*$, since only they contribute a factor~$t$ 
	to the weight. Taking the derivative at $t=0$, we are only left with those 
	loops that visit the edge~$uu^*$ exactly once. In the second factor, 
	because we set $t=0$, the only loops that contribute are those that do not 
	visit~$uu^*$. This leaves precisely all loops in the graph~$G$. The 
	right-hand side of~\eqref{eqn:highTaveDeriv} therefore becomes
	\[
		\Biggl( \sum_{r=1}^\infty \sum_{\Loop \in \Loops^{uu^*}_r(G_\gamma)} 
		w\bigl( \Loop; x_\gamma'(1) \bigr) \Biggr) \exp\Biggl( 
		\sum_{r=1}^\infty \sum_{\Loop\in \Loops_r(G)} w(\Loop; x') \Biggr),
	\]
	where $\Loops^{uu^*}_r(G_\gamma)$ is the set of loops of length~$r$ 
	in~$G_\gamma$ that visit~$uu^*$ once. From this, applying 
	Theorem~\ref{thm:Zdeterminant} again to the second factor, we find that
	\begin{equation}
		\label{eqn:highTaveLoops}
		\ave{\sigma_u \sigma_v}^\free_{G,\beta}
		= \Biggl( \sum_{r=1}^\infty \sum_{\Loop\in \Loops^{uu^*}_r(G_\gamma)} 
		w\bigl( \Loop; x_{\gamma}'(1) \bigr) \Biggr) \frac{Z_G(x')}{Z_G(x)}.
	\end{equation}

	Note the ratio of graph generating functions in~\eqref{eqn:highTaveLoops}. 
	Recall that we have seen such a ratio of graph generating functions before 
	in the low-temperature case, namely in~\eqref{eqn:lowTaveRatio}. Thus, 
	this ratio can be interpreted as a two-point function between the spins at 
	$u^*$ and~$v^*$ in a dual Ising model with positive boundary conditions at 
	the dual low temperature~$\beta^*$, given by $\exp(-2\beta^*) = 
	\tanh\beta$. Using~\eqref{eqn:lowTaveLoops}, we can express this ratio in 
	terms of a sum over all $u^*v^*$-odd loops in the graph~$G$, if we like.

	Next, we want to consider the limit as $G\to \Z^2$. By the argument given 
	in Section~\ref{ssec:lowT}, we already know that the ratio of graph 
	generating functions in~\eqref{eqn:highTaveLoops} converges to 
	$\ave{\sigma_{u^*} \sigma_{v^*}}^+_{ \Z^{2*}, \beta^* }$. It remains to 
	consider what happens to the sum over the loops that visit~$uu^*$ once. To 
	this end, for a general finite or infinite rectangle~$R$ in~$\Z^2$ 
	containing $u$ and~$v$, we define
	\begin{equation}
		\label{eqn:argamma}
		a_r(R_\gamma; x'_\gamma) := \sum_{\Loop\in \Loops^{uu^*}_r(R_\gamma)} 
		w(\Loop; x'_\gamma),
	\end{equation}
	where we have simplified the notation by letting $x'_\gamma \equiv 
	x'_\gamma(1)$.
	
	As in the low-temperature case, the loops that contribute to 
	$a_r(R_\gamma; x'_\gamma)$ must be confined to the box~$B^{uv}_r$, defined 
	in the same way as before, except possibly for the steps taken along the 
	additional edges in~$E_\gamma$, which are not counted in the length of the 
	loop, and are allowed to go outside~$B^{uv}_r$. Hence,
	\begin{equation}
		\label{eqn:argammaeq}
		a_r(R_\gamma; x'_\gamma)
		= a_r\bigl( (R \cap B^{uv}_r)_\gamma; x'_\gamma \bigr),
	\end{equation}
	where $R \cap B^{uv}_r$ is the largest subgraph of~$R$ contained 
	in~$B^{uv}_r$, as before. It follows that $a_r(G_\gamma; x'_\gamma) \to 
	a_r\bigl( \Z^2_\gamma; x'_\gamma \bigr)$ for all $r\geq1$. As before, we 
	now want to use dominated convergence to prove that
	\begin{equation}
		\label{eqn:sumargamma}
		\lim_{G\to\Z^2} \sum_{r=1}^\infty a_r(G_\gamma; x'_\gamma)
		= \sum_{r=1}^\infty  a_r\bigl( \Z^2_\gamma; x'_\gamma \bigr),
	\end{equation}
	and that the right-hand side is absolutely summable. This requires an 
	appropriate uniform bound (in~$R$) on the right-hand side 
	of~\eqref{eqn:argamma}.

	To obtain this bound, by~\eqref{eqn:argammaeq} it is sufficient to 
	consider an arbitrary finite rectangle~$R$ in~$\Z^2$ containing $u$ 
	and~$v$, and such that~$R$ is contained in~$B^{uv}_r$. Note that every 
	loop of length~$r$ in~$R_\gamma$ which visits the edge~$uu^*$ once, has a 
	representation of the form $\Path \concat \gamma$, where $\Path$ is a path 
	of length~$r$ in~$R$ from $v$ to~$u$. Here, we use the facts that the 
	part~$\Path$ of the loop never visits~$uu^*$, and that the steps taken 
	along~$\gamma$ from $u$ to~$v$ do not contribute towards the length of the 
	loop, since they are along additional edges.
	
	Let $\Lambda_R(x')$ be the transition matrix for the graph~$R$ with edge 
	weights~$x'_e$. Note that the sum of the weights of all paths~$\Path$ of 
	length~$r$ from $v\east$ to~$\north u$, for instance, is given by the 
	entry of the matrix~$\Lambda_R^r(x')$ in row~$v\east$ and column~$\north 
	u$. To compute the sum of the weights of the corresponding loops $\Path 
	\concat \gamma$, we only need to multiply this entry by the 
	factor~$e^{i\phi/2}$, where $\phi$ is the sum of the turning angles 
	encountered in the path from $\north u$ to~$v\east$ along the edges 
	in~$E_\gamma$. From these observations, we can conclude that
	\[
		a_r(R_\gamma; x'_\gamma) \leq 16\, \norm{\Lambda_R^r(x')}_{\max},
	\]
	where $\norm{\cdot}_{\max}$ denotes the maximum-entry norm, and the 
	factor~16 comes from the fact that there are 4~directed (representative) 
	edges pointing out from~$v$, and 4 pointing to~$u$. By  
	Theorem~\ref{thm:keybound} and the fact that the maximum-entry norm of a 
	matrix is bounded by the operator norm, and using that 
	$(\tanh\beta_c)^{-1} = \sqrt2+1$ by~\eqref{eqn:beta_c}, we obtain
	\begin{equation}
		\label{eqn:argammaBound}
		a_r\bigl( R_\gamma; x'_\gamma \bigr)
		\leq 16\, \norm{\Lambda_R^r(x')}
		\leq 16\, \norm{\Lambda_R(x')}^r
		\leq 16\, \Bigl( \frac{\tanh\beta}{\tanh\beta_c} \Bigr)^r.
	\end{equation}
	
	We emphasize that this bound holds uniformly for all finite and infinite 
	rectangles~$R$ containing $u$ and~$v$. Hence, \eqref{eqn:sumargamma} holds 
	by dominated convergence, and this completes the proof of 
	Theorem~\ref{thm:highTcorr}.
\end{proof}

\begin{proof}[Proof of Corollary~\ref{cor:highTcorr}]
	We now consider what happens to the two-point function studied above when 
	we let $\norm{u-v}$ tend to infinity. Since the loops that visit~$uu^*$ 
	necessarily have length at least $\norm{u-v}$, we can write
	\[
		\ave{\sigma_u \sigma_v}^\free_{\Z^2,\beta}
		= \biggl( \sum_{r\geq \norm{u-v}} a_r\bigl( \Z^2_\gamma; x'_\gamma 
		\bigr) \biggr) \ave{\sigma_{u^*} \sigma_{v^*}}^+_{\Z^{2*}, \beta^*}.
	\]
	By Theorem~\ref{thm:lowTcorr}, the two-point function on the right is 
	bounded between $0$ and~$1$. Alternatively, at this stage we could also 
	observe that the ratio of graph generating functions 
	in~\eqref{eqn:highTaveLoops} is always between $-1$ and~$+1$, since the 
	same even subgraphs contribute to both generating functions, but only in 
	the numerator, some of them come with a negative sign. Furthermore, the 
	bound in~\eqref{eqn:argammaBound} holds for $a_r\bigl( \Z^2_\gamma; 
	x'_\gamma \bigr)$. This gives the desired upper bound. That the two-point 
	function is nonnegative follows directly from~\eqref{eqn:highTave}.
\end{proof}

\bibliography{ising2d}

\end{document}